\documentclass[12pt, colorinlistoftodos]{amsart}
\usepackage{a4wide}
\usepackage{amssymb}
\usepackage{amsthm}
\usepackage{amsmath}
\usepackage{mathtools}
\usepackage{physics}
\usepackage{enumitem}
\usepackage{amscd}
\usepackage{verbatim}
\usepackage{bm}
\usepackage{soul}
\usepackage[mathscr]{eucal}
\usepackage[all]{xy}
\usepackage{todonotes}
\allowdisplaybreaks 
\usepackage[toc,page]{appendix}

\usepackage{dsfont, stmaryrd}
\usepackage{tikz, tikz-cd, caption, tabu}
\usepackage{tkz-euclide}
\usepackage[autostyle,italian=guillemets]{csquotes}
\usepackage[style=alphabetic,sorting=nyt,firstinits=true,maxnames=5,maxalphanames=5,backend=biber,doi=false, url=false, isbn=false]{biblatex}
\usepackage{hyperref}

\numberwithin{equation}{section}

\theoremstyle{plain}
\newtheorem{theorem}{Theorem}[section]
\newtheorem{corollary}[theorem]{Corollary}
\newtheorem{lemma}[theorem]{Lemma}
\newtheorem{proposition}[theorem]{Proposition}

\theoremstyle{definition}
\newtheorem{definition}[theorem]{Definition}
\newtheorem{remark}[theorem]{Remark}

\theoremstyle{remark}

\newcommand{\OO}{\mathcal O}
\newcommand{\A}{\mathbb{A}}
\newcommand{\R}{\mathbb{R}}
\newcommand{\Q}{\mathbb{Q}}
\newcommand{\Z}{\mathbb{Z}}
\newcommand{\N}{\mathbb{N}}
\newcommand{\C}{\mathbb{C}}
\newcommand{\h}{\mathbb{H}}
\renewcommand{\H}{\mathbb{H}}

\newcommand{\vol}{\mathrm{vol}}

\usepackage{subfiles}

\newcommand{\indco}{\delta}

\newcommand{\coef}{c}
\newcommand{\indcoef}{\nu}
\newcommand{\opcomf}{\widehat{\Gamma}^\prime}
\newcommand{\conmu}{\xi}
\newcommand{\qform}{q}
\newcommand{\ind}{r}

\newcommand{\genU}{e}
\newcommand{\genUU}{e'}

\newcommand{\indone}{\alpha} 
\newcommand{\indtwo}{{\alpha'}} 
\newcommand{\coeff}{y} 
\newcommand{\dual}[1]{#1 ^\vee} 

\newcommand{\KMlift}{{\Lambda_{\operatorname{KM}}}}
\newcommand{\KMliftL}{{\Lambda^L_{\operatorname{KM}}}}
\newcommand{\Hom}{\mathrm{Hom}}

\newcommand{\genH}{e} 
\newcommand{\genHH}{e'} 
\newcommand{\varphiKM}{\varphi_{\rm KM}}
\newcommand{\basevec}{e} 
\newcommand{\baseco}{y} 
\newcommand{\Gpol}{Q} 

\newcommand{\genvec}{x} 
\newcommand{\HH}{\H}
\newcommand{\RR}{\R}
\newcommand{\QQ}{\Q}
\newcommand{\ZZ}{\Z}
\newcommand{\CC}{\C}

\newcommand{\bigO}{\mathrm{O}}
\newcommand{\domain}{\mathcal{D}}
\newcommand{\tubedom}{\mathcal{H}}
\newcommand{\Pet}{\mathrm{Pet}}
\newcommand{\NP}{N_P}
\newcommand{\MP}{M_P}
\newcommand{\nP}{n_P}
\newcommand{\mP}{m_P}
\newcommand{\Cl}{\mathrm{Cl}}
\newcommand{\dis}{D} 

\newcommand{\lp}{\left (}
\newcommand{\rp}{\right )}

\newcommand{\Fc}{{\mathcal{F}}}
\newcommand{\Oc}{{\mathcal{O}}}
\newcommand{\Sc}{{\mathcal{S}}}
\newcommand{\ec}{{\mathfrak{e}}}
\newcommand{\Zb}{\mathbb{Z}}

\newcommand{\Qb}{\mathbb{Q}}
\newcommand{\SO}{{\mathrm{SO}}}
\newcommand{\af}{\mathfrak{a}}
\newcommand{\df}{\mathfrak{d}}
\newcommand{\ef}{\mathfrak{e}}
\newcommand{\Nm}{{\mathrm{Nm}}}
\newcommand{\Ab}{\mathbb{A}}
\newcommand{\Hb}{\mathbb{H}}
\newcommand{\ebf}{{\mathbf{e}}}
\newcommand{\attr}[1]{\color{red} {#1} \color{black}}
\renewcommand{\tr}{\operatorname{Tr}}
\newcommand{\Sp}{\mathrm{Sp}}
\newcommand{\e}{\mathbf{e}}
\newcommand{\SL}{\mathrm{SL}}
\renewcommand{\aa}{a}

\newcommand{\riccardo}[1]{\textcolor{black}{#1}}
\newcommand{\yl}[1]{\todo[color=yellow]{YL: #1}}

\begin{document}
\title[An Adelic Approach to the Kudla--Millson Lifts over Totally Real Fields]{An Adelic Approach to the Kudla--Millson Lifts over Totally Real Number Fields}

\author[Y.~Li]{Yingkun Li}
\address{
  School of Mathematics,
  Institute of Advanced Studies,
1 Einstein Drive, 
Princeton NJ 08540, USA}
\email{lykpi@ias.edu}

\author[M.~Zhang]{Mingkuan Zhang}
\address{
Fachbereich Mathematik, Technische Universität Darmstadt, Schlossgartenstraße 7, D–
64289 Darmstadt, Germany.
}
\email{mzhang@mathematik.tu-darmstadt.de}

\author[R.~Zuffetti]{Riccardo Zuffetti}
\address{
Fachbereich Mathematik, Technische Universität Darmstadt, Schlossgartenstraße 7, D–
64289 Darmstadt, Germany.
}
\email{zuffetti@mathematik.tu-darmstadt.de}

\subjclass[2020]{}
\thanks{
}

\maketitle


\begin{abstract}
We compute the Fourier expansions of the Kudla–Millson theta lifts of vector-valued Hilbert cusp forms of parallel weight.
These expansions are with respect to $0$-dimensional cusps of non-compact orthogonal Shimura varieties defined over any totally real number field, and computed using a mixed model of the adelic Weil representation under a partial Fourier transform.
As a consequence, we obtain the injectivity of the theta lifts under some additional hypothesis on the variety.
Furthermore, we show that the theta lifts are square-integrable harmonic differential forms, and deduce lower bounds for dimensions of cohomology groups of such varieties.
\end{abstract}

 \makeatletter
 \providecommand\@dotsep{5}
 \def\listtodoname{List of Todos}
 \def\listoftodos{\@starttoc{tdo}\listtodoname}
 \makeatother

\allowdisplaybreaks
\tableofcontents


\section{Introduction}

Kudla and Millson constructed, for orthogonal and unitary locally symmetric spaces, distinguished Schwartz forms whose associated theta series are (non-holomorphic) Siegel modular forms with values in closed differential forms on the symmetric space~\cite{kudlamillson-harmonicI} \cite{kudlamillson-harmonicII}.
The cohomology classes of such Kudla--Millson theta series are holomorphic modular forms~\cite{KM-intersections}.
Their geometric significance lies in the fact that their Fourier coefficients realize the Poincaré duals of cycles of Heegner type on the associated locally symmetric spaces.

There are many papers in the literature that investigate the arithmetic and geometric properties of the theta lifts for orthogonal Shimura varieties, constructed by pairing Siegel cusp forms with Kudla--Millson theta functions under the Petersson inner product~\cite{kudlamillson-tubes} \cite{bruinier-habil} \cite{bruinier-funke} \cite{bruinier-funke-inj} \cite{bruinier14}  \cite{stein-converse} \cite{zuffetti_unfolding1} \cite{bruinier-zuffetti} \cite{romain} \cite{metzler} \cite{kieferzuffetti}  \cite{metzlerzuffetti}. 
These results, together with the \emph{injectivity} of Kudla--Millson lifts, have many nice consequences, such as converse theorems for Borcherds products, dimension formulas for cohomology groups, and geometric properties of cones generated by special cycles in the pseudoeffective cones of orthogonal Shimura varieties \cite{bruiniermoller} \cite{zuf-manu} \cite{zuf-tran} \cite{bbfw} \cite{bfz}.

All the results cited so far have been established in the setting of orthogonal Shimura varieties arising from quadratic spaces \emph{defined over $\mathbb{Q}$}.
In the present paper, we investigate the Kudla--Millson theta lift for orthogonal Shimura varieties associated to quadratic spaces defined over a \emph{totally real number field~$F$}.
In this context, the automorphic input consists of \emph{Hilbert cusp forms} of parallel weight.
Since the base field is an extension of~$\QQ$, we adopt an adelic formulation, which allows for a uniform treatment of all archimedean places simultaneously.


\subsection{The theta lifts}
To describe in more details the results of the present paper, we introduce some notations.
Let $G=\SL_2$ be defined over $F$ and let~$d=[F:\QQ]$.
We denote by~$\Ab,\Ab_{f},\Oc,\df$ and $\dis$, the adeles, finite adeles, ring of integers, different ideal, and discriminant of~$F$ respectively, 
and define
$F_\infty\coloneqq F\otimes_\mathbb{Q}\mathbb{R}, \widehat{\mathcal{O}} \coloneqq 
\prod_\mathfrak{p}\Oc_{\mathfrak{p}}$.

Let $V$ be an isotropic, $F$-quadratic space of dimension $n+2$ for $n\in \N$, with quadratic form $q$ such that~$V$ has signature~$(n,2)$ at all real places of $F$. 
Then the Hermitian symmetric space  $\mathcal{D}$ associated to~$H=\SO(V)$ has complex dimension~$dn$.
Consider an even $\mathcal{O}$-lattice~$L\subset V$.
For clarity, throughout this introduction we assume that~$L$ is $\Zb$-\emph{unimodular}, i.e.\ the restriction of scalar of $L$ to $\Zb$ is unimodular\footnote{This in particular implies that $n$ is even \cite{BL23}.}.
This simplifying assumption is removed in the main body of the paper, where we work in full generality.

For an open compact subgroup $K \subset H(\Ab_f)$ that stabilizes $\widehat{L}\coloneqq L\otimes_{\Oc}\widehat{\mathcal{O}}$,
the following double quotient consists of
the complex points of an orthogonal Shimura variety
\begin{align*}
  X_K=
H(F)  \backslash\mathcal{D}\times H(\Ab_f)/K.
\end{align*}
Since $V$ is isotropic,  $X_K$ is non-compact.
The variety $X_K$ is in general disconnected, and the number of connected components is at least the size of the class group of $F$
\begin{equation} \label{eq:Cl}
\Cl(F) \coloneqq  F^\times \backslash \Ab_f^\times /\widehat{\Oc}^\times.
\end{equation} 
Equality happens when $K$ is maximal. 
We denote by $Y_K \cong \Gamma_K\backslash\domain$ the connected component of $X_K$ containing the identity of $H(\Ab_f)$.

In this setting, the Kudla--Millson theta function arising from the dual reductive pair $(G,H)$ is a map
\[
  \Theta_{\mathrm{KM}}\colon
  G(F)\backslash G(\Ab)\to
  \mathcal{A}^{2d}(X_K),
\]
where~$\mathcal{A}^{2d}(X_K)$ is the space of differential~$2d$-form on~$X_K$.
The de Rham cohomology class~$[\Theta_{\mathrm{KM}}]$ is a holomorphic Hilbert modular form, whose Fourier coefficients are classes of certain algebraic cycles of codimension~$d$ in~$X_K$, see~\cite{rosuyott} and~\cite{kudla-remarks} for details.

Let~$S_{k}$ be the space of \emph{Hilbert cusp forms}~$f\colon\HH^d\to\CC$ of parallel weight~$k=\frac{n+2}{2}$ with respect to~$\SL_2(\mathcal{O})$.
The theta lift associated to~$\Theta_{\mathrm{KM}}$ is defined as
\[
\Lambda_{\mathrm{KM}}\colon S_{k}\to \mathcal{A}^{2d}(X_K),\qquad
f(\tau) = \sum_{\substack{\indcoef \in \mathfrak{d}^{-1,+}}} \coef_\indcoef(f) \e(\indcoef \tau)
\mapsto\int_{G(F)\backslash G(\Ab)}
 \Theta_{\mathrm{KM}}(g) \overline{f^\#(g)}\, dg,
\]
where $\tau \in \h^d, \e (\cdot) = \exp (2\pi i \tr \cdot)$, $f^\#\colon G(F) \backslash G(\A)\to\CC$ is the adelization of~$f$
and~$dg$ is a suitably normalized Haar measure on~$G(\A)$, see Sections~\ref{sec:groupvarexpl} and \ref{sec:KM} for more information.
If~$F=\QQ$, then~$\Lambda_{\mathrm{KM}}$ is simply an adelic rewriting of the classical Kudla--Millson lift.
In general, the Hilbert modular variety for $G$ will still be connected because of strong approximation for $\SL_2$. However, the number of cusps of $\SL_2(\Oc)\backslash \Hb^d$ will be $|\Cl(F)|$.



\subsection{Fourier expansions of theta lifts}
The first result of the present paper is the Fourier expansion of~$\Lambda_{\mathrm{KM}}(f)$, for~$f\in S_{k}$, with respect to any $0$-dimensional cusp of~$Y_K$.
The choice of such a cusp is equivalent to the choice of an isotropic line~$\ell$ in~$V$.
The Fourier expansion is described in terms of the tube-domain realization~$\mathcal{H}$ of~$\domain$ with respect to~$\ell$, on which the parabolic subgroup~$P(F_\infty)\subset H(F_\infty)$ stabilizing~$\ell$ acts transitively.
We fix a base point $Z_0 \in \mathcal{H}$ as in section \ref{sec:tubedomains}.
For every~$Z=X+iY\in\mathcal{H}$, we denote by $h_Z
\in P(F_\infty)$ the standard element such that $h_Z \cdot Z_0 = Z$
(see \eqref{eq;explchofh_Z}).

Let $U_{}=\Oc \oplus \df^{-1}$ be the hyperbolic plane defined as the $\Zb$-unimodular lattice endowed with the quadratic form~$q_{U_{}} (x,y) \coloneqq xy$.
In general, hyperbolic planes are given by an element in $\A_f^\times$, and their isomorphism class is determined by a class in $\Cl(F)$ (see Section~\ref{sec:KM}). 
In the main body of the paper, we work with these more general hyperbolic planes.
For the subspace $V_0 \coloneqq  U^\perp \subset V$, denote the character $\chi_{V_0}(x) = (x, (-1)^{\frac{n(n-1)}{2}} \det V_0)_F,\ x \in \Ab^\times/F^\times$.
We denote by~$\zeta_F(s)$ the Dedekind zeta function for~$F$, and $c_\nu(f, \xi)$ an integral related to the Fourier coefficient of $f$ at $\xi\in \Cl(F)$ as in \eqref{eq:FC-xi}.
The Kudla--Millson lifts of Hilbert cusp forms have the following Fourier expansion, see Theorem~\ref{Fourier expansion} for a more general result.

\begin{theorem}\label{thm:intro}
Let $f\in S_{k}$ be a Hilbert cusp form of parallel weight $k=\frac{n+2}{2}$. 
Suppose that~$L = L_0 \oplus U$ is $\Zb$-unimodular. 
Then the Fourier expansion of~$\KMliftL(f)$ with respect to the cusp corresponding to the isotropic line~$\Oc \subset U$ on $Y_K \subset X_K$ is  
\begin{align*} 
\begin{split}
  \KMliftL(f)(Z) &=
    \KMliftL_{, 0}(f)(Y,1) 
    +
    \frac{ (2\pi)^d }{    \dis \zeta_F(2)}
    \sum_{\xi \in \Cl(F)}
        |\xi|_\A^{\frac{n}{2}} \chi_{V_0} (\xi)
\sum_{\delta \in (\xi \hat\Oc \cap F^\times)/\Oc^\times}    | \Nm(\indco)|^{n-1}\\
&\quad\times    \sum_{\substack{ x\in (L_0 \otimes F \cap \delta \xi^{-1}\widehat{L_0}) }} \e\lp (X,x)\rp
    \overline{\coef_{q(x /\indco)} (f_{},\xi)}
  \sum_{\underline{\indone},\underline{\indtwo}}
  W_{(\underline{\indone},\underline{\indtwo})}   (Y, x )
      (h_Z^{-1})^*\omega_{\underline{\indone},\underline{\indtwo}}. 
\end{split}
\end{align*}
Here $\KMliftL_{ ,0}(f)(Y,1)$ is the constant term defined in~\eqref{decomposed theta lift}, $  W_{(\underline{\indone},\underline{\indtwo})}   (Y, x )$ is the auxiliary function~\eqref{eq:Wx0}, and~$\omega_{\underline{\indone},\underline{\indtwo}}\in\wedge^{2d}T_{Z_0}^*\mathcal{H}$ are the cotangent vectors~\eqref{eq:omega} defining the Kudla--Millson theta form. 
\end{theorem}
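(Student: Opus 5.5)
We follow the Borcherds/Kudla unfolding strategy, carried out adelically via a mixed model of the Weil representation. The plan rests on the splitting $V = V_0 \oplus U$, where $U$ is spanned by the isotropic vectors $e \in \ell$ and $e'$. We apply the partial Fourier transform in the $U$-variable along the Lagrangian $F e'$. This converts $\omega(g,h)\varphi_{\mathrm{KM}}$, for $\varphi = \varphi_{\mathrm{KM}}\otimes \mathbf 1_{\widehat L}$, into a function on $V_0(\A)\times \A^2$, on which $G(\A)$ acts linearly through $(c,d)\mapsto (c,d)g$. Poisson summation in $F e'$ then rewrites the theta function as
\[
\Theta(g,h) = \sum_{x\in V_0(F)} \sum_{(c,d)\in F^2} \widehat{\omega(g,h)\varphi}(x;c,d).
\]

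We split this sum into two pieces. The terms with $(c,d)=(0,0)$ give a theta function for $V_0$. Pairing this piece with $f$ produces the constant term $\KMliftL_{,0}(f)(Y,1)$ of \eqref{decomposed theta lift}, and we will not analyze it further. The terms with $(c,d)\neq 0$ form a single $G(F)$-orbit of $(0,1)$, with stabilizer $N(F)$. Unfolding the integral over $G(F)\backslash G(\A)$ therefore reduces it to an integral over $N(F)\backslash G(\A)$. We then use the Iwasawa decomposition $g = n(b)m(a)k$. The $b$-integral over $F\backslash\A$ picks out the Fourier coefficient of $f^\#$ at $q(x)$ scaled by $a$. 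The remaining $a$-integral runs over $\A^\times$, which we write as $\bigsqcup_{\xi\in\Cl(F)} F^\times \xi\, F_\infty^\times \widehat{\Oc}^\times$ in the usual way. Writing $a = \delta\xi\cdot a_\infty u$ accounts for the sum over $\xi$ and over $\delta\in(\xi\widehat\Oc\cap F^\times)/\Oc^\times$, and it explains where the factors $|\xi|_\A^{n/2}$, $\chi_{V_0}(\xi)$ and $|\Nm(\delta)|^{n-1}$ come from. These arise from the action of $m(a)$ in the Weil representation, namely the modulus factor $|a|^{(n+2)/2}$ together with the quadratic character of $V_0$. Finally, the finite-place unfolded integral is simply the characteristic function of $x\in \delta\xi^{-1}\widehat{L_0}$.

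Next we handle the $H$-variable. Evaluating at $h = h_Z$ and using that $N_P$ acts on the mixed model by characters gives the factor $\e((X,x))$. The $M_P$-part, coming from $Y$, is absorbed into the archimedean integral. The archimedean factor requires its own computation. We must expand $\varphi_{\mathrm{KM}}$ at each real place as a polynomial (in fact Gaussian times Hermite) in the coordinates adapted to $\ell$. We then partially Fourier transform, integrate against $y^{k}$ over $a_\infty$ and $K_\infty$, and expand in the basis $\omega_{\underline\alpha,\underline\alpha'}$. The output defines $W_{(\underline\alpha,\underline\alpha')}(Y,x)$ as in \eqref{eq:Wx0}. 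The constants $(2\pi)^d/(D\zeta_F(2))$ come from the normalization of $dg$: the volume of $\SL_2(F)\backslash\SL_2(\A)$ together with the self-dual measure on $F\backslash\A$, which contributes $D^{-1/2}$ per Fourier integral.

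The main obstacle is justifying the unfolding, which requires absolute convergence. We would establish it using the cuspidality of $f$, which gives rapid decay, together with Gaussian decay of the Schwartz function in $(c,d)$. The second obstacle is bookkeeping: checking that the $K_\infty$-integral of the Kudla--Millson form against the weight-$k$ vector isolates exactly the stated combination of cotangent vectors. For this we would use the fact that $\varphi_{\mathrm{KM}}$ has weight $k=(n+2)/2$ under $K_\infty$, so the $K_\infty$-integral is trivial and we may work directly at $k=1$.
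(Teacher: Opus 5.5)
Your proposal is correct and follows essentially the same route as the paper. The paper proves the general Theorem~\ref{Fourier expansion}, here specialized to $\mathfrak a=\Oc$, $\zeta=1$, $L_0^\vee=L_0$, using the same steps you give: partial Fourier transform in the $\ell$-coordinate, Poisson summation, unfolding over the $N(F)\backslash G(F)$-orbit of $(0,1)$, the Iwasawa and class-group decomposition of the torus, the finite transform \eqref{eq;comparffin}, and the archimedean transform of Lemma~\ref{lemma; FtransfKMschwartz} evaluated at $h_Z$. Only the bookkeeping needs adjusting: in the mixed model $m(a)$ acts through $\omega_0$ on the $n$-dimensional $V_0$, so the factor is $|a|^{n/2}\chi_{V_0}(a)$ (the Schr\"odinger-model factor $|a|^{(n+2)/2}$ loses $|a|^{-1}$ under the partial transform); $|\Nm(\delta)|^{n-1}$ comes from the substitution $t=a_\infty/|\delta|$ after combining $a_\infty^{n/2}$, the factor $a_\infty^{k}$ produced by \eqref{eq:FC-xi}, and the measure $2^d\,da_\infty/a_\infty^{3}$; you need $a_f=\delta^{-1}\xi$, not $\delta\xi$, to obtain the stated condition $\delta\in\xi\widehat{\Oc}$; and the constant is $2^d\vol(K_\infty)\cdot\vol(\widehat\Gamma)\cdot\sqrt{\dis}$, where $\sqrt{\dis}=\vol(\widehat{\mathfrak d^{-1}})$ comes from the finite partial Fourier transform rather than from $\vol(F\backslash\A)=1$.
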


The key ingredient to prove Theorem~\ref{thm:intro} is to rewrite the theta kernel, which is defined in terms of the Schrödinger model of the Weil representation~$\omega$ of $G(\A)\times H(\A)$, with respect to the mixed model of~$\omega$ induced by the isotropic line~$\Oc \subset U$.
This is achieved by computing partial Fourier transforms of the Kudla--Millson Schwartz form defining~$\Theta_{\mathrm{KM}}$.

As a consequence of Theorem~\ref{thm:intro}, we deduce the following injectivity criterion, see Theorem \ref{injectivity} and \ref{injectivity two hyper} for general results.
\begin{theorem}
  \label{thm:inj-intro}
  If~$L$ splits off~$U\oplus U$, then the Kudla--Millson lift~$\KMlift$ as well as its restriction to~$Y_K$ are injective.
\end{theorem}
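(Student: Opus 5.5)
We deduce injectivity from the Fourier expansion of Theorem~\ref{thm:intro}, in the general form of Theorem~\ref{Fourier expansion}. Since $Y_K$ is a connected component of $X_K$, injectivity of the restriction to $Y_K$ implies injectivity of $\KMlift$, so it suffices to prove the first. Suppose $\KMliftL(f)|_{Y_K}=0$ and write $L=L_1\oplus U$ with $L_1=L_0'\oplus U$. Expand at the cusp attached to the isotropic line $\Oc\subset U$. The $2d$-forms $(h_Z^{-1})^*\omega_{\underline{\indone},\underline{\indtwo}}$ are pointwise linearly independent, and the characters $X\mapsto \e((X,x))$ for distinct $x$ are independent as functions of $X$. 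Since $\KMliftL(f)$ vanishes identically, every coefficient
\[
\sum_{\xi,\delta}(\cdots)\,\overline{\coef_{q(x/\indco)}(f,\xi)}\,W_{(\underline{\indone},\underline{\indtwo})}(Y,x)
\]
must vanish for each fixed $x\in L_1\otimes F$, each multi-index, and all $Y$. The contributions of the various pairs $(\xi,\delta)$ to a fixed $x$ must be disentangled. I would do this using the $Y$-dependence of $W$, which is through a Gaussian in $x/\delta$ as in \eqref{eq:Wx0}, or by restricting to the component of $X_K$ indexed by $\xi$ (if the general statement is written componentwise). Either way the goal is to conclude that $\coef_{q(x/\delta)}(f,\xi)\,W(Y,x)=0$ for each individual term.

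Next, for each $x$ with $q(x)\neq 0$, choose a multi-index $(\underline{\indone},\underline{\indtwo})$ and a point $Y$ at which $W_{(\underline{\indone},\underline{\indtwo})}(Y,x)\neq0$. The explicit formula for $W$ is a product over the real places of Hermite-type polynomials times an exponential, so this is a local archimedean check. It gives $\coef_{q(x/\delta)}(f,\xi)=0$ for all $x\in L_1\otimes F\cap \delta\xi^{-1}\widehat{L_1}$.

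The second hyperbolic plane now enters. Since $L_1\supset U$, for any totally positive $\nu$ in the appropriate fractional ideal (the one indexing Fourier coefficients at the cusp $\xi$), we can take $x=(a,b)\in U$ with $ab=\nu$, placed in $\delta\xi^{-1}\widehat{U}$ with $\delta=1$ or another suitable generator. Then $q(x/\delta)$ ranges over every totally positive $\nu$ in that ideal. Hence every Fourier coefficient of $f$ at every cusp $\xi\in\Cl(F)$ vanishes, and $f=0$; for $\Zb$-unimodular $L$ this is immediate, and in general one must check that all $\nu\in\mathfrak{d}^{-1,+}$ twisted by $\xi$ are attained. For the lift to vanish one needs it to vanish only on $Y_K$, because all cusps $\xi$ already appear in the expansion at that single cusp of $Y_K$.

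The main obstacle is the separation step. The sum over $\xi$ and $\delta$ means a single $x$ may appear with several coefficients $\coef_{q(x/\delta)}(f,\xi)$ for different $\delta$ (units are already quotiented out). I would handle this by showing that the functions $Y\mapsto W(Y,x)$, after rescaling by $\delta$, are linearly independent for distinct $\delta$, using their distinct exponential decay rates $\exp(-2\pi\,\tr(\cdot)/\Nm)$. As a fallback, I would first pair against a test form to isolate the $\delta=1$ term.
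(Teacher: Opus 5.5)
There is a genuine gap in your separation step, and the mechanism you propose for it cannot work. In Theorem~\ref{Fourier expansion} the archimedean factor $W_{(\underline{\indone},\underline{\indtwo})}(Y,x)$ of \eqref{eq:Wx0} depends only on $Y$ and $x$, not on the pair $(\indco,\xi)$. The substitution $x\coloneqq|\indco|x_0$, $t=a_\infty/|\indco|$ at the end of that proof absorbs $\indco$ completely, so all $(\indco,\xi)$-dependence sits in the single scalar $C^L_{\mathfrak a}(x,1;f)$ of \eqref{eq:CL}. In particular, $W$ is not a Gaussian in $x/\indco$. Hence the coefficient of $\e((X,x))\,(h_Z^{-1})^*\omega_{\underline{\indone},\underline{\indtwo}}$ is $C^L_{\mathfrak a}(x,1;f)\,W_{(\underline{\indone},\underline{\indtwo})}(Y,x)$. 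Its vanishing gives exactly one linear relation, $C^L_{\mathfrak a}(x,1;f)=0$, among the numbers $\overline{\coef_{q(x/\indco)}(f_{\xi x/\indco},\xi)}$ for $(\indco,\xi)\in D(x,1)$. No analysis of the $Y$-dependence, decay rates, or pairing with test forms can split this sum. Restricting to components of $X_K$ does not help either: the $\xi$-sum comes from $\A_f^\times/\widehat{\Oc}^\times$ on the $\SL_2$ side, and, as you note yourself, all $\xi$ already occur in the expansion on $Y_K$. (Your reduction from $X_K$ to $Y_K$ is fine. Nonvanishing of $W$ is best taken from Remark~\ref{rmk:2}: for $\underline{\indone}=\underline{\indtwo}=(1,\dots,1)$ it is a product of $K$-Bessel functions, not polynomials times an exponential.)

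The missing idea is that you never need to separate: choose $x$ so that $D(x,1)$ is a singleton. This is where the second hyperbolic plane is really used. Write $L_0=U\oplus M$ with $U=\Oc u\oplus\df^{-1}u'$, and take $x=u+\nu u'$ with $\nu\in\df^{-1,+}$ (adding $v\in M^\vee$ in the vector-valued case). In your notation this means specializing to $x=(a,b)$ with $a=1$. If $(\indco,\xi)\in D(x,1)$, then:
\begin{itemize}
\item the $u$-coordinate of $\xi x/\indco\in\widehat{L_0^\vee}$ gives $\xi\indco^{-1}\in\widehat\Oc$;
\item the condition $\indco\in\xi\widehat\Oc$ gives the reverse inclusion;
\item hence $\xi\indco^{-1}\in\widehat\Oc^\times$, and the choice of representatives of $\Cl(F)$ forces $\xi=1$ and $\indco\in\Oc^\times$ (Lemma~\ref{one divisor}).
\end{itemize}
Then $C^L(x,1;f)$ is a nonzero multiple of $\overline{\coef_\nu(f)}$. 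So $\coef_\nu(f)=0$ for every $\nu\in\df^{-1,+}$, and $f=0$ already from the expansion at $\infty$. No information about coefficients at the other cusps $\xi$ is needed.

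For general $x$, the relations $C^L(x,\zeta;f)=0$ can be inverted by induction on $|D(x,\zeta)|$ (Theorem~\ref{injectivity}, Lemma~\ref{induction number of divisors}). However, that uses all $\zeta\in\A_f^\times$, i.e.\ vanishing on all of $X_K$, so it would not give the statement for $Y_K$.
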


\subsection{Comparison with Borcherds' formalism}

When $F=\QQ$, the Fourier expansion of the Kudla--Millson lift was computed in \cite{zuffetti_unfolding1} using Borcherds’ unfolding method~\cite{borcherds_singularities}.
That approach relies on decomposing Siegel theta functions attached to homogeneous polynomials with respect to the splitting of a hyperbolic plane in~$L$.
While extremely powerful, this formalism is technically involved.

In Section~\ref{sec:comparison} we describe Borcherds’ method in terms of the action of the parabolic subgroup~$P(F_\infty)$ on~$\mathcal H$, and illustrate how Borcherds' formalism boils down to our adelic approach in this case.
As a consequence, we recover the Fourier expansion of~\cite{zuffetti_unfolding1} as a special case of Theorem~\ref{thm:intro}.
Beyond serving as a consistency check, this comparison sheds light on Borcherds’ computations, translating them into the adelic framework.

\subsection{Harmonicity of theta lifts and applications}
In Section~\ref{sec:harmonic} we prove that the Kudla--Millson lifts of Hilbert cusp forms are \emph{harmonic} differential forms with respect to the~$H(F_\infty)$-invariant Laplacian on~$\domain$, generalizing~\cite[Theorem~4.1]{kudlamillson-tubes}.
This is done by a careful study of how the Casimir elements of the universal enveloping Lie algebras of~$G(F_\infty)$ and~$H(F_\infty)$ act on~$\Theta_{\mathrm{KM}}$.
Since in low degree the de Rham and~$L^2$-cohomology groups of~$Y_K$ are isomorphic, as a drect consequence of Theorem \ref{cor;applicinL2}, we deduce the following result.

\begin{theorem}
    Let~$n>3$.
    The dimension of the de Rham cohomology group~$H^{2d}(X_K)$ is bounded below by the dimension of the image of~$\Lambda_{\mathrm{KM}}$.
    In particular, if~$\Lambda_{\mathrm{KM}}$ is injective, then
    \[
    \dim H^{2d}(X_K)\geq\dim S_{k}.
    \]
    The same is true if~$X_K$ is replaced with its connected component~$Y_K$ and~$\Lambda_{\mathrm{KM}}$ is restricted to~$Y_K$.
\end{theorem}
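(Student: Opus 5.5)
The plan is to realize the image of $\Lambda_{\mathrm{KM}}$ inside $L^2$-cohomology in degree $2d$ and then use that this agrees with de Rham cohomology in low degree. There are three steps.

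\emph{Step 1: the lifts are closed, square-integrable and harmonic.} For $f\in S_k$, $\Lambda_{\mathrm{KM}}(f)$ is closed because $\Theta_{\mathrm{KM}}(g)$ is a closed $2d$-form for every $g$ and we may differentiate under the integral. The integral converges absolutely and locally uniformly, since $f$ is cuspidal and hence rapidly decreasing on a Siegel domain, while the theta kernel grows at most moderately. Square-integrability on $X_K$ comes from the Fourier expansion of Theorem~\ref{Fourier expansion}, applied at each $0$-dimensional cusp (and on each component, by translating with elements of $H(\Ab_f)$):
\begin{itemize}
\item the non-constant terms involve the functions $W_{(\underline{\alpha},\underline{\alpha'})}(Y,x)$, which decay exponentially as $Y$ goes to the cusp;
\item the constant term $\Lambda^L_{\mathrm{KM},0}(f)(Y,1)$ comes from a lower-rank lift attached to $L_0$. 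It has to be checked to be $L^2$ against the invariant volume $\prod_j (q(Y_j))^{-n}\,dX\,dY$ in a Siegel set, which should hold for $n>3$.
\end{itemize}
For the $1$-dimensional cusps I would use the same mixed-model argument with respect to a flag, or reduce to the $0$-dimensional case, since Siegel sets around them are controlled by the same estimates. Harmonicity is the result of Section~\ref{sec:harmonic}: $\Delta_H\Theta_{\mathrm{KM}}$ equals a multiple of the $\SL_2$ Casimir applied to $\Theta_{\mathrm{KM}}$, plus a constant. After integrating by parts against $\overline{f^\#}$, which is a Casimir eigenfunction with the right eigenvalue for weight $k=\frac{n+2}{2}$, we get $\Delta\Lambda_{\mathrm{KM}}(f)=0$. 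This is exactly Theorem~\ref{cor;applicinL2}, which I take as given.

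\emph{Step 2: passing to cohomology.} Every $L^2$ harmonic form on the complete manifold $X_K$ is closed and coclosed, and represents a class in reduced $L^2$-cohomology. This gives an injective linear map from the image of $\Lambda_{\mathrm{KM}}$ into $\bar H^{2d}_{(2)}(X_K)$, because a nonzero $L^2$ harmonic form is never orthogonal to itself. Next I would invoke Zucker's conjecture (Looijenga, Saper--Stern) together with the vanishing and isomorphism results of Borel and Zucker in the stable range. For degrees $i<$ (the $\mathbb{Q}$-rank-dependent bound, which here is roughly $n-1$ per archimedean factor, i.e.\ below the codimension of the boundary strata of the Baily--Borel compactification), the natural map $H^i_{(2)}(X_K)\to H^i(X_K)$ is an isomorphism, and $L^2$-cohomology is finite-dimensional and equal to its reduced version. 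The condition $n>3$ should be precisely what puts $2d$ in this range. Composing the two maps gives an injection of $\mathrm{im}\,\Lambda_{\mathrm{KM}}$ into $H^{2d}(X_K)$, so $\dim H^{2d}(X_K)\ge \dim\mathrm{im}\,\Lambda_{\mathrm{KM}}$. If $\Lambda_{\mathrm{KM}}$ is injective, Theorem~\ref{injectivity} yields $\dim H^{2d}(X_K)\geq \dim S_k$. For $Y_K$ the same argument applies verbatim to the restricted lift, since $Y_K$ is a connected component and is itself a locally symmetric space $\Gamma_K\backslash\mathcal D$.

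\emph{Main obstacle.} I expect the hardest step to be the range statement: checking that degree $2d$ lies below the threshold where $L^2$ and de Rham cohomology agree for the restriction of scalars $\mathrm{Res}_{F/\mathbb{Q}}\SO(V)$. This requires tracking the boundary codimensions over $F$ and confirming that $n>3$ suffices. My first attempt would be to quote Zucker's theorem in the form ``$H^i_{(2)}\cong IH^i$ of the Baily--Borel compactification $\cong H^i$ for $i<\operatorname{codim}(\text{boundary})$''. The boundary of the Baily--Borel compactification of $X_K$ has complex codimension at least $dn-d$, coming from the $1$-dimensional cusps as products of modular curves. Hence $2d<d(n-1)$ is what is needed, and this holds for $n>3$.
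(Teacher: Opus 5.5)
\paragraph{Verdict.} Your overall architecture is the paper's: harmonicity from Theorem~\ref{thm:omegatodeltak}, square-integrability, $L^2$-Hodge theory to read $L^2$ harmonic $2d$-forms as $L^2$-cohomology classes, and the low-degree map $H^{2d}_{(2)}(Y_K)\to H^{2d}(Y_K)$, controlled by the fact that the Baily--Borel boundary has complex codimension at least $d(n-1)$. Your Step~2 is fine. The genuine gap is square-integrability, which you assert rather than prove.

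\paragraph{Why your $L^2$ sketch does not close.} You only say the constant term ``should'' be $L^2$ for $n>3$, and the boundary strata attached to isotropic planes are only gestured at. The Fourier-expansion route also has concrete obstacles.
\begin{enumerate}
\item Theorem~\ref{Fourier expansion} applies only at a cusp $\ell$ for which $L$ splits off a $U_{\mathfrak a}$ containing $\ell$, and only at $h=m_P(\zeta)$ (Remark~\ref{rmk:1}). It is not available at every cusp of every component.
\item $\Lambda^L_{\mathrm{KM},0}(f)$ need not decay. By the mixed-model formula at $\eta=(0,0)$, it is $|q(Y)|^{1/2}$ times a theta lift attached to $(V_0,L_0)$, evaluated at the direction of $Y$. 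Its $L^2$-ness near the cusp therefore requires $L^2$-control of that lift on an arithmetic quotient of hyperbolic space, which itself has cusps when $V_0$ is isotropic.
\item Termwise exponential decay of the $W_{(\underline{\alpha},\underline{\alpha}')}$ has to be made uniform in the sum over $x$.
\end{enumerate}
Also, you may take only the harmonicity clause of Theorem~\ref{cor;applicinL2} as given; the rest of that theorem is the very bound you are proving.

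\paragraph{The missing idea.} The paper follows \cite[Proposition~4.1]{bruinier-funke-inj} and reduces square-integrability to Weil's convergence criterion \cite{weil-convcrit}. In essence:
\begin{enumerate}
\item Since $y^{k/2}|f|$ is bounded for a cusp form, $\|\Lambda_{\mathrm{KM}}(f)\|^2$ is majorized by integrating over $X_K$ a product of two (majorized) theta kernels.
\item That integral is a genus-two theta integral for $\mathrm{O}(V)$.
\item Weil's criterion gives absolute convergence when $\dim V-r>3$, where $r\le 2$ is the Witt index. For $r=2$ this is exactly $n>3$.
\end{enumerate}

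\paragraph{Where $n>3$ is actually used.} It is needed for square-integrability, not for the cohomological range. \cite[Section~5]{harriszucker} gives injectivity of $H^r_{(2)}(Y_K)\to H^r(Y_K)$ for all $r\le c$, not only an isomorphism for $r<c$. Since $2d\le d(n-1)$ already holds for $n\ge 3$, the step you single out as the main obstacle is routine, and the step you pass over quickly is the real content.

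\paragraph{Minor correction.} The positive-dimensional boundary strata are $d$-dimensional Hilbert modular varieties for $F$, not products of modular curves. Your codimension count $d(n-1)$ is nonetheless correct.
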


\subsection{Acknowledgments}
We would like to thank Jan Bruinier, Paul Kiefer and Eugenia Rosu for useful conversations on the topics of the present paper,
and thank Yong Hu and Rainer Schulze-Pillot for careful and helpful suggestions on the representing behaviors of integral lattices over number fields. The authors acknowledge support by Deutsche Forschungsgemeinschaft (DFG, German Research Foundation) through the Collaborative Research Centre TRR 326 \emph{Geometry and Arithmetic of Uniformized Structures}, project number 444845124.
Y.~Li is supported by the DFG Heisenberg Program, project number 539345613.
R.~Zuffetti is supported by the DFG Research Grant (Eigene Stelle) \emph{Arithmetic and geometry of the Kudla--Millson theta function}, project number 554793187.

\section{Preliminaries}\label{sec:prelim}
Let~$F$ be a totally real extension of degree~$d$ over~$\QQ$, and let~$\sigma_1,\dots,\sigma_d$ be the different embeddings of~$F$ in~$\RR$.
Let~$V$ be a quadratic space over~$F$ with bilinear form~$(\cdot{,}\cdot)$.
The associated quadratic form is~$q(\cdot)\coloneqq(\cdot{,}\cdot)/2$.
In this paper, we always assume that~$V\otimes_{\sigma_r}\RR$ has signature~$(n,2)$ for every~$r$, and that~$V$ contains isotropic lines $\ell$ and $\ell'$ spanning a hyperbolic plane over~$F$.

\subsection{Decomposition of $V$ with respect to an isotropic line}\label{sec:decompofV}
Consider the decomposition
\begin{equation}\label{eq;decVF}
V=\ell\oplus V_0\oplus \ell',\quad V_0\coloneqq(\ell+\ell')^\perp\subset V.  
\end{equation}
We write every~$x\in V$ with respect to the decomposition~\eqref{eq;decVF} as~$x=x_2\genH + x_0 + x_1\genHH$, where~$\genH$ is a basis vector of~$\ell$ and~$\genHH\in\ell'$ is its dual, namely \riccardo{such that} $(\genH,\genHH)=1$.
We consider~$x$ as the column vector
\begin{equation}\label{eq;columnbas}
x=\left(\begin{smallmatrix} x_2\\ x_0\\ x_1 \end{smallmatrix}\right).
\end{equation}
The bilinear form of~$V$ now can be written as $(x,x)=(x_0,x_0) + x_1x_2$.

We endow the real spaces~$V_\ind\coloneqq V\otimes_{\sigma_\ind}\R$ with the extension of the quadratic form~$q$ with respect to the embedding~$\sigma_\ind$.
We fix \emph{once and for all} a pseudo-orthonormal basis of~$V_\ind$ as follows. 
Let $e_2,\dots,e_{n+1}$ be an orthogonal basis of $V_0$ such that $q(e_{n+1})$ is totally negative.
We define
\begin{align*}
e_1^\ind &= \frac{ \sigma_\ind(\genU)+\sigma_\ind(\genUU)}{\sqrt{2}},\quad 
e_{n+2}^\ind = \frac{\sigma_\ind(\genU)-\sigma_\ind(\genUU)}{\sqrt{2}},\quad\\
e_j^\ind &= \frac{1}{\sqrt{|\sigma_\ind(2q(e_j))|}}\sigma_\ind(e_j),\quad  j =2,\dots,n+1.
\end{align*}
Then the basis $\{ e_j^\ind \}_{j=1}^{n+2}$ is pseudo-orthonormal, i.e.,
\begin{align*}
(e_k^\ind,e_l^\ind)=
\begin{cases}
1 & \text{if~$k=l\in\{1,\dots,n\}$,}\\
-1 & \text{if~$k=l\in\{n+1, n+2\}$,}\\
0 & \text{otherwise.}
\end{cases}
\end{align*}
We write any~$x\in V_\ind$ over this basis as~$\genvec=\sum_j \coeff_j e^\ind_j$.
The elements $\{ e_j^\ind \}_{j=2}^{n+1}$ form a pseudo-orthonormal basis of $V_0\otimes_{\sigma_\ind}\R$.

\subsection{Parabolic subgroups}
Let $H=\mathrm{SO}(V)$ be the special orthogonal group of $V$ and let~$P$ be its parabolic subgroup stabilizing the isotropic line~$\ell$.
The Levi decomposition of~$P$ is of the form~$P=\NP\rtimes \MP$.
Here~$\NP$ is the \emph{unipotent radical}, which consists of Eichler transformations and acts as the identity on~$\ell$.
We consider it as a group of matrices acting on vectors of the form~\eqref{eq;columnbas} as
\begin{align*}
\NP=\left\{\nP(b)\coloneqq
\left(\begin{smallmatrix}
	1 & b^* & -q(b) \\
	0 & I_{V_0} & -b \\
	0 & 0 & 1
\end{smallmatrix}\right) : b\in V_0\right\},
\end{align*}
where~$b^*\in\Hom(V_0,F)$ is defined as~$b^*(x_0)=(b,x_0)$.
The group~$\MP\cong F^\times \times \mathrm{SO}(V_0)$ is the \emph{Levi factor} of~$P$, consisting of the isometries 
\begin{align*}
\mP(\alpha,h_0)\left(\begin{smallmatrix}
	x_2 \\
	x_0 \\
	x_1
	\end{smallmatrix}\right)
=
	\left(\begin{smallmatrix}
	\alpha x_2
	\\
	h_0(x_0)
	\\
	\alpha^{-1} x_1
	\end{smallmatrix}\right),\qquad
\alpha\in F^\times,\quad h_0\in\mathrm{SO}(V_0).
\end{align*}
When $h_0$ is trivial, we sometimes omit it from the notation. 

\subsection{Tube domains}\label{sec:tubedomains}
Let~$\domain=\domain_1\times\dots\times \domain_d$ be the Hermitian symmetric space associated to~$H$, where~$\domain_\ind$ denotes the Grassmannian of oriented negative definite planes in~$V_\ind$.
The Grassmannian~$\domain_\ind$, resp.~$\domain$, has a tube domain realization
\begin{align*}
\tubedom_\ind \coloneqq \{Z=X+iY\in V_0\otimes_{\sigma_\ind}\C : (Y,Y)<0\},
\end{align*}
resp.~$\tubedom \coloneqq \prod_{\ind=1}^d \tubedom_\ind$.
The identification of~$\tubedom_\ind$ with~$\domain_\ind$ is given by mapping~$Z$ to the plane spanned by $v_1$ and $v_2$ with orientation~$v_1\wedge v_2$, where~$v_1=X+\sigma_r(\genUU)+(q(Y)-q(X))\sigma_r(\genU)$ and~$v_2=Y-(X,Y)\sigma_r(\genU)$.
See~\cite[Part~II, Section~2.4]{1-2-3} for more details.

The action of the parabolic subgroup $P_{\ind}\coloneqq P \otimes_{\sigma_\ind}\RR$ on $\domain_\ind$ transfers to $\tubedom_\ind$ as follows.
The unipotent subgroup~$N_{\ind} \coloneqq \NP \otimes_{\sigma_\ind}\RR$ on $\domain_\ind$ acts by translations on~$\tubedom_\ind$, namely,
\begin{align*}
\nP(b) \colon \ \tubedom_\ind\to\tubedom_\ind,\quad Z\mapsto Z+b \quad \text{for all~$b\in V_0\otimes_{\sigma_\ind}\RR$.}
\end{align*}
The elements of the form~$\mP(\aa,h_0)\in
\MP \otimes_{\sigma_\ind}\RR$ on $\domain_\ind$ with~$\aa\in\RR^*$ and~$h_0\in \mathrm{SO}(V_0\otimes_{\sigma_\ind}\RR)$ act on~$\tubedom_\ind$ as
\begin{align*}
\mP(\aa,h_0) \colon \ \tubedom_\ind\to\tubedom_\ind, \quad Z\mapsto h_0(\aa Z).
\end{align*}

We fix a \emph{base point}~$Z^\ind_0\coloneqq i\sqrt{2}e^\ind_{n+1}$ on each~$\tubedom_\ind$, and denote by~$z^\ind_0$ the corresponding point in~$\domain_\ind$ under the identification explained above.
This is a negative-definite plane in~$V_\ind$ with orientation~$e_{n+1}^\ind\wedge e_{n+2}^\ind$.
Recall that the action of $P_\ind$ on $\tubedom_\ind$ is transitive. 
In fact, any~$Z=X+iY\in\tubedom_\ind$ can be written as~$h_Z(Z^\ind_0)$, where
\begin{equation}\label{eq;explchofh_Z}
    h_Z=\nP(X)\mP(\lvert q(Y)\rvert^{1/2},h_Y)\in P_{\ind}.
\end{equation}
Here~$h_Y\in\mathrm{SO}(V_0\otimes_{\sigma_\ind}\RR)$ is an isometry mapping~$\sqrt{2\lvert q(Y)\rvert}e^\ind_{n+1}$ to~$Y$.
The existence of such~$h_Y$ is ensured by the transitivity of the action of~$\mathrm{SO}(V_0\otimes_{\sigma_\ind}\RR)$ on the set of anisotropic vectors in~$V_0\otimes_{\sigma_\ind}\RR$ of the same norm.

\subsection{Weil representation}\label{sec;mixedmodelF}
We use the same the notations as in the introduction. 
For~$R=\A$, $\A_f$ or~$F_\infty$, we denote by~$G'_{R}$ the metaplectic cover of~$G(R)$.
Endow~$\A$ with the standard additive character~$\psi$ with Archimedean component~$\psi_\infty(x)=\ebf(x)=e(\tr(x))$ for~$x\in F_\infty$, where~$\tr(x)=\sum_j x_j$ and $e(z)\coloneqq\exp(2\pi i z)$.

Attached to the reductive dual pair~$(G,H)$ and the character~$\psi$, there is the Weil representation~$\omega$ \cite{weil-rep}.
In this section we recall its \emph{Schrödinger model} together with its \emph{mixed model} with respect to the isotropic line~$\ell$ of~$V$, in the same spirit of~\cite{LM80} and~\cite[Section~4.1]{Kudla-another}.
These models play a key role in computing the Fourier expansion of Kudla--Millson lifts of Hilbert cusp forms.


Let~$\big(W,\langle\cdot{,}\cdot\rangle\big)$ be the standard symplectic space over~$F$ of dimension~2.
This means that~$W=X\oplus Y$, with~$X= F e_X$ and~$Y=F e_Y$ such that
\begin{align*}
\langle e_X,e_Y\rangle = - \langle e_Y,e_X\rangle =1\quad\text{and}\quad \langle e_X,e_X\rangle=\langle e_Y,e_Y\rangle=0.
\end{align*}
We write vectors in~$W$ over the basis~$e_X,e_Y$ as \emph{row} vectors of coordinates, so that the action of the symplectic group $\Sp(W)$ on~$W$ translates to the right-action of~$\SL_2(F)$ on~$F^2$.

Recall that every symplectic space admits a (non-unique) \emph{polarization}, namely~$W=Z\oplus Z^\vee$ with symplectic form given by
\begin{align*}
(x\oplus\eta,y\oplus\xi)\mapsto \xi(x) - \eta(y).
\end{align*}
The choice of~$X$ and~$Y$ above induces a polarization for~$W$, since we may consider~$Y$ as the dual of~$X$.

Consider now the vector space~$V\otimes W$ endowed with the bilinear form~$(\cdot{,}\cdot)\otimes\langle\cdot{,}\cdot\rangle$.
Here the tensor products are over~$F$.
It is easy to check that $(\cdot{,}\cdot)\otimes\langle\cdot{,}\cdot\rangle$ is a symplectic form that makes $V\otimes W$ a symplectic space.

There are two natural polarizations of~$V\otimes W$.
The first one is given by
\begin{equation}\label{eq;firstpolF}
V\otimes W = \big(V\otimes X\big) \oplus\big( V\otimes Y\big),
\end{equation}
on which one realizes the \emph{Schrödinger model}~$\omega$ of the Weil representation.
In this model,~$\omega$ is a representation of~$G'_\A\times H(\A)$ on the space of Schwartz functions $\mathcal{S}(V\otimes X(\Ab))\cong\mathcal{S}(V(\Ab))$ arising from the first summand of~\eqref{eq;firstpolF}, see e.g.~\cite{Kudla-central} for an explicit description.


Another polarization is given by
\begin{equation}\label{eq;secondpolF}
V\otimes W = \Big( (V_0\otimes X) \oplus (\ell'\otimes W) \Big)
\oplus \Big( (V_0\otimes Y) \oplus (\ell\otimes W) \Big),
\end{equation}
on which one realizes the \emph{mixed model} of the Weil representation. 
By a slightly abuse of notation, we still denote it as $\omega$. 
It gives an action of~$G'_\A\times H(\A)$ on the space $\mathcal{S}\big(((V_0\otimes X)\oplus (\ell'\otimes W))(\Ab) \big)$ arising from the first summand of~\eqref{eq;secondpolF}, which can be recovered from the Schrödinger model as follows.

In analogy with Section~\ref{sec:decompofV}, we consider every Schwartz function~$\varphi\in\mathcal{S}(V(\Ab))$ as a function of~$(x_2,x_0,x_1)$, where $x_0\in V_0\otimes X(\Ab)\cong V_0(\Ab)$, $x_1\in \ell'\otimes X(\Ab)\cong \ell'(\Ab)\cong\Ab$ and~$x_2\in \ell\otimes X(\Ab)\cong \ell(\Ab)\cong\Ab$, with identifications given with respect to the choice of bases $e \in \ell$ and~$e'\in \ell'$ above.
Note that the pairing of~$\ell\otimes X$ with~$\ell'\otimes Y$ given by the restriction of $(\cdot{,}\cdot)\otimes\langle\cdot{,}\cdot\rangle$ becomes the product~$x_1x_2$ in~$\A$.

Let $(\eta_1,\eta_2)\in\Ab^2$. The partial Fourier transform
\begin{equation}\label{eq:passschömixmod}
\begin{split}
\mathcal{S}(V(\Ab)) & \longrightarrow \mathcal{S}(V_0(\Ab))\otimes \mathcal{S}((\ell'\otimes W)(\Ab)),\\
\varphi (x_2,x_0,x_1) & \longmapsto \Fc(\varphi)(x_0,\eta_1,\eta_2) \coloneqq
\int_{\Ab} \varphi(x_2,x_0,\eta_1)\psi(x_2\cdot \eta_2) \,dx_2
\end{split}
\end{equation}
is an isomorphism, and transfers the Weil representation from the Schrödinger model to the mixed model.
The additive Haar measure on $\A$ is self-dual with respect to Fourier transform.
In particular, $\vol (F\backslash \A)=1$ and $\vol(\widehat\Oc) = \dis^{-1/2}$.

More precisely,
\begin{equation}\label{mixed weil}
  \begin{split}
\Fc\big(\omega(g')\varphi\big)(x_0,\eta_1,\eta_2) &= \omega_0(g')\Fc(\varphi)(x_0,(\eta_1,\eta_2)g),\quad 
 g'\in G'_{\Ab};\\    
\Fc\big(\mP(\aa,h_0)\varphi\big)(x_0,\eta_1,\eta_2)&= |\aa|_{\A} \Fc(\varphi)(h_0^{-1}x_0,\aa\eta_1,\aa\eta_2),\quad
 \mP(\aa,h_0)\in \MP(\Ab);\\
\Fc\big(\nP(b)\varphi\big)(x_0,\eta_1,\eta_2) = \psi\Big(\big((b,x_0)&+q(b)\eta_1\big)\eta_2\Big)\Fc(\varphi)(x_0+b\eta_1,\eta_1,\eta_2),\quad 
 \nP(b)\in \NP(\Ab),
\end{split}
\end{equation}
where~$\omega_0$ is the Weil representation for~$V_0$ and~$g \in G(\Ab)$ is the image of~$g'$ under the canonical projection.
The passage from one model to the other enables us to rewrite theta functions via Poisson summation, as we now illustrate.
\begin{definition}
For any~$\varphi\in\mathcal{S}(V(\Ab))$,  the associated theta function is
\begin{align*}
\theta(g',h,\varphi) \coloneqq \sum_{x\in V(F)} \omega (g') \varphi(h^{-1} x),\quad g'\in G'_{\A},\ h\in H(\A).
\end{align*}
\end{definition}
Applying the partial Fourier transform, we may identify theta functions associated to the Schrödinger and the mixed models of the Weil representation as
\begin{equation}\label{theta PFT}
\sum_{x\in V(F)}\omega(g',h)\varphi(x) = \sum_{\substack{x_0\in V_0(F) \\ \eta\in F^2}} \mathcal{F}(\omega(g',h)\varphi)(x_0,\eta).
\end{equation}

\subsection{Lattices}\label{sec:lattices}
For~$a\in F$, let $a_\ind=\sigma_\ind(a)$ be the $\ind$-th component of $a$ under the embedding $F\subset F_\infty$.
For a subset $S\subset F$, let $S^+$ be the subset of totally positive elements of $S$, i.e.\
\[S^+=\{a\in S: \text{$a_\ind>0$ for all $\ind$}\},\]
so that~$\Oc^{\times, +}$ is the set of totally positive units of~$\Oc$. 
For any fractional ideal $\mathfrak{a}$, we define $\widehat{\mathfrak{a}} \coloneqq \mathfrak{a} \otimes \widehat{\Oc}$.

Let $L\subset V$ be an even integral $\Oc$-lattice, i.e.~a finitely generated~$\OO$-submodule of~$V$ such that~$L\otimes_{\OO} F=V$ and~$q(L)\subset \mathfrak{d}^{-1}$.
We denote by~$L^\vee$ the $\ZZ$-dual lattice of~$L$ with respect to~$q_\QQ\coloneqq\tr_{F/\QQ}q$, namely
\begin{align*}
L^\vee&\coloneqq \{ v\in L\otimes_\ZZ\QQ : B_\QQ(v,L)\subset\ZZ\}
= \{ v \in L \otimes_\Oc F : B(v,L) \subset \mathfrak{d}^{-1} \},
\end{align*}
where~$B$ and~$B_\QQ$ are the bilinear forms induced by~$q$ and~$q_\QQ$ respectively.
The discriminant group of~$L$ is the finite~$\Oc$-module $L^\vee/L$.
Note that for any $\Zb$-lattice $L \subset V$ satisfying $L \otimes \Qb = V$, we can find an order $\Oc' \subset \Oc$ such that $L$ is an $\Oc'$-module.
%
Let $\Sc_L$ be the subspace of~$\mathcal{S}(V(\Ab_{f}))$ generated by the characteristic functions~$\varphi_\mu$ of the cosets~$\mu + \widehat{L}$ for~$\mu\in \dual{L}/L
= \widehat{\dual{L}}/\widehat{L}$. 

\begin{definition}[{\cite[Section~4]{bruinier-reg}}]\label{def:Weilrepdisc}
Let~$\Gamma'$ be the preimage in~$G'_{F_\infty}$ of the Hilbert modular group~$\Gamma=\SL_2(\mathcal{O})$ and let~$\opcomf$ be the preimage of~$\widehat{\Gamma}\coloneqq \SL_2(\widehat{\mathcal{O}})$ in $G'_{\A_f}$.
We define the Weil representation~$\rho_L$ of~$\Gamma'$ on~$\mathcal{S}_L$ by
\begin{align*}
\rho_L(\gamma)\varphi_\mu=\overline{\omega}(\gamma_f)\varphi_\mu,\quad \gamma\in \Gamma^\prime,
\end{align*}
where~$\gamma_f$ is the unique element in~$\opcomf$ such that~$\gamma\gamma_f\in G(F)$, and where~$G(F)$ is identified with its image in~$G'_\A$ under the canonical splitting.
\end{definition}

The subspace~$\Sc_L$ is preserved under the action of~$\opcomf$, see~\cite[Section~3.2]{bruinier-reg}.
Note that~$\Sc_L$ is isomorphic to the group algebra~$\CC[\dual{L}/L]$.
We denote the standard basis vectors of the latter as~$\mathfrak{e}_\mu$.
For two functions $f=\sum_{\mu\in \dual{L}/L} f_\mu \ec_\mu$ and $g=\sum_{\mu\in \dual{L}/L} g_\mu \ec_\mu$ valued in~$\Sc_L$, we define the Hermitian pairing
\begin{align}\label{hermitian pairing}
\langle f, g \rangle \coloneqq\sum_{\mu\in \dual{L}/L} \overline{f_\mu} g_\mu.
\end{align}
Note that~$\langle \cdot{,} \cdot \rangle$ is antilinear in the first entry and linear in the second one.

\subsection{Hilbert modular forms}
We recall vector-valued Hilbert modular forms with respect to~$\rho_L$ following~\cite{bruinier-reg}.
By a slight abuse of notation, we fix a set $\{ \xi_i : \xi_i \in \Ab_f^\times , i= 1,\dots,h \}$ of representatives of the class group $\Cl(F)$ from \eqref{eq:Cl} and also denote the set of representatives as $\Cl(F)$, where $\xi_1=1$.


Let~$j(\gamma,\tau)=\prod_r(c_r\tau_r+d_r)$ for any~$\gamma=\big(\begin{smallmatrix}
    a & b \\ c & d
\end{smallmatrix}\big)\in G(F_\infty)\cong\SL_2(\RR)^d$.
We realize~$G'_{F_\infty}$ as the group of pairs~$(\gamma,\phi)$ with~$\gamma=\big(\begin{smallmatrix}
    a & b \\ c & d
\end{smallmatrix}\big)\in G(F_\infty)$ and~$\phi$ a holomorphic map on~$\HH^d$ such that~$\phi(\tau)^2=j(\gamma,\tau)$.
Note that $\phi(\tau)=\pm\sqrt{j(\gamma,\tau)}$, where~$\sqrt{\cdot}$ denotes the principal branch of the square root.
The group law of~$G'_{F_\infty}$ is
\[
\big(\gamma_1,\phi_1(\tau)\big)\big(\gamma_2,\phi_2(\tau)\big)
=
\big(\gamma_1\gamma_2,\phi_1(\gamma_2\tau)\phi_2(\tau)\big).
\]
For a lattice $L \subset V$ as in section \ref{sec:lattices}, 
and $k=(k_1,\dots,k_d)\in(\ZZ+\frac{n}{2})^d$.
We define a Petersson slash operator of weight~$k$ for~$\rho_L$ on functions~$f\colon\HH^d\to\mathcal{S}_L$ as
\begin{equation}\label{eq:petslashgen}
    \big(f|_{k,\rho_L}(\gamma,\phi)\big)(\tau)
=
(c_1\tau_1+d_1)^{-k_1+n/2}\cdots(c_d\tau_d+d_d)^{-k_d+n/2}\phi(\tau)^{-n} \rho_L(\gamma,\phi)^{-1} f(\gamma\tau)
\end{equation}
for all~$(\gamma,\phi)\in\Gamma'$.

\begin{definition}\label{def:vvhilbertasBr}
    A \emph{Hilbert modular form of weight~$k$ with respect to~$\rho_L$} is a holomorphic map~$f:\H^d\rightarrow \Sc_L$ such that
\begin{align*}
f|_{k,L}(\gamma,\phi) = f(\tau) \qquad \text{for all } (\gamma,\phi) \in \Gamma'.
\end{align*}
If~$F=\QQ$, we also assume that~$f$ is holomorphic at~$\infty$.
\end{definition}

Due to Koecher's principle, $f$ has the Fourier expansion
\begin{equation}\label{eq:Fexpdef}
f(\tau) = \sum_{\mu \in L^{\vee} / L} \sum_{\substack{\indcoef \in \{ 0 \} \cup F^{+}\\ \indcoef - q(\mu) \in\mathfrak{d}^{-1}}} 
\coef_\indcoef(f_\mu) \e(\indcoef \tau)\ef_\mu
\end{equation}
at $\infty$.
If~$F=\QQ$, the holomorphicity of~$f$ at~$\infty$ means that the Fourier expansion of~$f$ is as in~\eqref{eq:Fexpdef}.
We denote by $M_{k,L}$ (resp.~$S_{k,L}$) the space of Hilbert modular (resp.\ cusp) forms of weight~$k$ with respect to~$\rho_L$.
In the main body of the paper, we will be mainly concerned with the case of parallel weight~$k$ with~$k\equiv \frac{n}{2}$ mod~$\ZZ$.
In this case, the Petersson slash operator~\eqref{eq:petslashgen} boils down to
\[
\big(f|_{k,\rho_L}(\gamma,\phi)\big)(\tau)
=
\phi(\tau)^{-2k} \rho_L(\gamma,\phi)^{-1}
f(\gamma\tau).
\]

\subsection{Adelic Hilbert modular forms}\label{sec:groupvarexpl}
We now recall how to construct vector-valued adelic Hilbert modular forms on~$G(F)\backslash G'_\A$ from~$M_{k,L}$, where~$G(F)$ is embedded in~$G'_\A$ under the canonical splitting homomorphism.
The reader may refer to~\cite[Section~3.1]{G90}, \cite[Section~8]{HiragaIkeda} and~\cite[Section~1]{kudla_integrals} for further details.
For simplicity, we work with Hilbert modular forms of \emph{parallel weight}~$k$, since so is the weight of the Kudla--Millson theta function.

Any Hilbert modular form~$f\in M_{k,L}$ can be considered as an~$\mathcal{S}_L$-valued automorphic form on~$G(F)\backslash G'_\A$ as follows.
By the strong approximation theorem for~$G(\A)$, for every~$g'\in G'_\A$, there exists~$\gamma\in G(F)$, $g_\infty'=(g_\infty,\phi)\in G'_{F_\infty}$ and~$g_f'\in \opcomf$ such that~$g'=\gamma g_\infty' g_f'$.
We define the function $f^\# (g') = \sum\limits_{\mu \in L^\vee/L} f_\mu^\#(g') \ef_\mu$ by
\[
f_\mu^\#(g')\coloneqq \phi(\mathbf{i})^{-2k} \bar\omega(g_f') f_\mu(g_\infty \mathbf{i}), \quad g'\in G'_\A,
\]
where~$\mathbf{i}=(i,\dots,i)\in\HH^d$. We denote the space of such functions by
\begin{align}
M^\#_{k,L}\coloneqq\{f^\# : f\in M_{k,L}\}.
\end{align}
Conversely, for a given~$F\in M^\#_{k,L}$, we can construct~$f\in M_{k,L}$ such that~$F=f^\#$ as
\[f(\tau)=y^{-k/2}F(g_\tau'),
\qquad g'_\tau=(g_\tau,\phi_\tau),
\]
where~$\phi_\tau(\cdot)=\sqrt{j(g_\tau,\cdot)}=y^{-1/4}$ is a constant function, for~$\tau=x+iy\in\HH^d$.
Here we use the standard multi-index notation, so that~$y^{-k/2}=\prod_{r}y_r^{-k/2}$ and~$
g_\tau=(g_{\tau_1},\dots,g_{\tau_d})$, where~$g_{\tau_r}\coloneqq\big(\begin{smallmatrix}
    1 & x_r \\ 0 & 1
\end{smallmatrix}\big)\big(\begin{smallmatrix}
    {y_r}^{1/2} & 0 \\
    0 & {y_r}^{-1/2}
\end{smallmatrix}\big)$ and~$\tau_r=x_r+iy_r$.

To illustrate further properties of~$f^\#$, we need to introduce some notation.
For any ring~$R$, we define
\begin{align*}
    N(R)\coloneqq\left\{n(b)=\big(\begin{smallmatrix}
    1 & b \\
    0 & 1
\end{smallmatrix}\big) : b\in R\right\},\quad
M(R)\coloneqq \left\{
m(a)=\big(\begin{smallmatrix}
    a & 0 \\
    0 & a^{-1}
\end{smallmatrix}\big) : a\in R^\times
\right\}.
\end{align*}
Note that the connected component $M(F_\infty)^+$ of $M(F_\infty)$
containing the identity is isomorphic to~$\{m(a):a\in\RR^{+}\}^d$.
The metaplectic cover $G_R'$ splits when restricted to the preimages of $M(R)$ and $N(R)$.
We also denote by~$N(R)$ and~$M(R)^+$ their images in~$G'_{R}$ under the canonical splitting.

Let~$K'_\infty\subset G'_{F_\infty}$ be the metaplectic cover of~$\SO_2(\RR)^d$.
For any~$\theta=(\theta_1,\dots,\theta_d)\in(-\pi,\pi]^d$, we define
\[
u(\theta)=(u(\theta_r))_{r=1}^d\in\SO_2(\RR)^d,\quad u(\theta_r)=\big(\begin{smallmatrix}
    \cos\theta_r & \sin\theta_r \\
    -\sin\theta_r & \cos\theta_r
\end{smallmatrix}\big).
\]
Let~$\chi_{2k}$ denote the character
\begin{equation}\label{eq;charforF}
\chi_{2k}\colon K_\infty'\to\CC^\times, \qquad \big(u(\theta),\varphi(\tau)\big)\mapsto
\varphi(\mathbf{i})^{-2k}
\end{equation}
The proof of the following result is standard and hence omitted.

\begin{lemma}\label{lemma:basicpropfsharp}
  If~$f\in M_{k,L}$, then
\begin{align}
    &f^\#(\gamma g' u')= \bar\omega(u') f^\#(g') \qquad\text{for all $\gamma\in G(F)$, $g'\in G'_\A$ and $u'\in \opcomf$,}\label{lemma:basicpropfsharp:1}
        \\
    &f^\#(g'u'_\infty)=\chi_{2k}(u'_\infty)f^\#(g') \qquad\text{for all~$u'_\infty\in K_\infty'$ and~$g'\in G'_\A$}.
      \label{lemma:basicpropfsharp:2}
\end{align} 
\end{lemma}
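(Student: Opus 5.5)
The plan is to verify both identities directly from the definition of $f^\#$, reducing everything to the decomposition $g'=\gamma g'_\infty g'_f$ given by strong approximation. First I would check that $f^\#$ is well defined. Suppose $g'=\gamma_1 g'_{\infty,1}g'_{f,1}=\gamma_2 g'_{\infty,2}g'_{f,2}$. Then $\delta\coloneqq\gamma_2^{-1}\gamma_1\in G(F)$ satisfies $\delta\, g'_{\infty,1}g'_{f,1}=g'_{\infty,2}g'_{f,2}$. Comparing components, $\delta_f\in\opcomf$, so $\delta\in\SL_2(\Oc)$. Write $\delta_\infty'$ for the archimedean component of $\delta$ under the canonical splitting. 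Then $g'_{\infty,2}=\delta'_\infty g'_{\infty,1}$ and $g'_{f,2}=\delta_f g'_{f,1}$. Here $\delta'_\infty\in\Gamma'$, and the element $\gamma_f$ of Definition~\ref{def:Weilrepdisc} attached to it is $\delta_f^{-1}$, because the canonical splitting gives $\delta'_\infty\delta_f=\delta$.

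Using the invariance $f|_{k,\rho_L}\delta'_\infty=f$, the cocycle relation $\phi_2(\mathbf{i})=\phi_\delta(g_{\infty,1}\mathbf{i})\phi_1(\mathbf{i})$ from the group law of $G'_{F_\infty}$, and $\rho_L(\delta'_\infty)=\bar\omega(\delta_f^{-1})$, I obtain
\[
\phi_2(\mathbf{i})^{-2k}\bar\omega(\delta_f g'_{f,1})f(\delta g_{\infty,1}\mathbf{i})=\phi_1(\mathbf{i})^{-2k}\bar\omega(g'_{f,1})f(g_{\infty,1}\mathbf{i}).
\]
This gives independence of the decomposition. I expect this step to be the main obstacle. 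One has to track the sign conventions of the metaplectic splitting carefully, in particular whether $\bar\omega$ of the finite part must be paired with $\rho_L(\delta)^{-1}$, so that the factor $\phi^{-2k}$ is consistent.

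With well-definedness in hand, \eqref{lemma:basicpropfsharp:1} is immediate. If $g'=\gamma_0 g'_\infty g'_f$, then $\gamma g'u'=(\gamma\gamma_0)g'_\infty(g'_fu')$ with $g'_fu'\in\opcomf$. The finite factor then becomes $\bar\omega(g'_fu')=\bar\omega(g'_f)\bar\omega(u')$, and $\bar\omega(u')$ pulls out, since $\bar\omega$ is a representation of $\opcomf$ on $\Sc_L$ (which is stable).

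For \eqref{lemma:basicpropfsharp:2}, note that $u'_\infty=(u(\theta),\varphi)$ commutes with finite adelic elements. Hence $g'u'_\infty=\gamma(g'_\infty u'_\infty)g'_f$, and $g_\infty u(\theta)\mathbf{i}=g_\infty\mathbf{i}$ because $u(\theta)$ fixes $\mathbf{i}$. The group law gives a new $\phi$ equal to $\phi(u(\theta)\mathbf{i})\varphi(\mathbf{i})=\phi(\mathbf{i})\varphi(\mathbf{i})$. Its $(-2k)$-th power is therefore $\phi(\mathbf{i})^{-2k}\chi_{2k}(u'_\infty)$, which is the claimed identity.
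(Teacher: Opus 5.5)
Your argument for \eqref{lemma:basicpropfsharp:2} is fine. The finite-adelic part fails, however, and the cause is not a metaplectic sign but the order of non-commuting operators.

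In \eqref{lemma:basicpropfsharp:1} you arrive at $\phi(\mathbf{i})^{-2k}\bar\omega(g'_f)\bar\omega(u')f(g_\infty\mathbf{i})$ and let $\bar\omega(u')$ ``pull out'' to the left. It cannot: it sits between $\bar\omega(g'_f)$ and $f(g_\infty\mathbf{i})$, and $\bar\omega$ is not abelian on $\opcomf$ once $L^\vee\neq L$. For instance, in the basis $\varphi_\mu$, $\bar\omega(n(b))$ is diagonal and non-scalar for suitable $b\in\widehat{\Oc}$, whereas $\bar\omega$ of $\left(\begin{smallmatrix}0&1\\-1&0\end{smallmatrix}\right)$ is a finite Fourier transform with no zero entries.

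In fact no nonzero $F$ can satisfy $F(g'u')=\bar\omega(u')F(g')$ unless $\bar\omega$ is abelian on the span of its values. Applying the rule twice gives $F(g'u_1u_2)=\bar\omega(u_2)\bar\omega(u_1)F(g')$, while applying it once gives $\bar\omega(u_1)\bar\omega(u_2)F(g')$. Right translation must therefore be governed by an anti-homomorphism, and the consistent form of the definition and of \eqref{lemma:basicpropfsharp:1} is
\[
f^\#(g')=\phi(\mathbf{i})^{-2k}\,\bar\omega(g'_f)^{-1}f(g_\infty\mathbf{i}),\qquad f^\#(\gamma g'u')=\bar\omega(u')^{-1}f^\#(g').
\]
With this form, \eqref{lemma:basicpropfsharp:1} is immediate from $\bar\omega(g'_fu')^{-1}=\bar\omega(u')^{-1}\bar\omega(g'_f)^{-1}$. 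It also matches the kernel. The identity $\theta(g'u',\varphi_\mu)=\theta(g',\omega(u')\varphi_\mu)$ shows that $\Theta^L_{\mathrm{KM}}(g'u')$ is obtained from $\Theta^L_{\mathrm{KM}}(g')$ by the transpose of the unitary matrix of $\omega(u')$, that is, by $\bar\omega(u')^{-1}$. This is exactly what makes the integrand of $\KMliftL$ right $\opcomf$-invariant.

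The same issue invalidates your well-definedness display. From $\delta'_\infty\delta_f=\delta\in G(F)$, Definition~\ref{def:Weilrepdisc} gives $\gamma_f=\delta_f$, not $\delta_f^{-1}$; the assignment $\delta'_\infty\mapsto\bar\omega(\delta_f)^{-1}$ would make $\rho_L$ an anti-homomorphism. So modularity reads $f(\delta\tau)=\phi_\delta(\tau)^{2k}\bar\omega(\delta_f)f(\tau)$.

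Since $\bar\omega(g'_{f,2})=\bar\omega(\delta_f)\bar\omega(g'_{f,1})$, the left side of your display equals $\phi_1(\mathbf{i})^{-2k}\bar\omega(\delta_f)\bar\omega(g'_{f,1})\bar\omega(\delta_f)f(\tau_1)$, where $\tau_1=g_{\infty,1}\mathbf{i}$. This fails in general already for $g'_{f,1}=1$. With your sign you would still get $\bar\omega(\delta_f)\bar\omega(g'_{f,1})\bar\omega(\delta_f)^{-1}$ in place of $\bar\omega(g'_{f,1})$.

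With the inverse convention the argument closes. The factorisation $\bar\omega(\delta_fg'_{f,1})^{-1}=\bar\omega(g'_{f,1})^{-1}\bar\omega(\delta_f)^{-1}$ puts $\bar\omega(\delta_f)^{-1}$ directly against $f(\delta\tau_1)$. It then cancels the modularity factor together with $\phi_\delta(\tau_1)^{-2k}$.

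Also, comparing components determines $g'_{\infty,2}$ and $g'_{f,2}$ only up to a common central sign $\epsilon\in\mu_2$. This is harmless, since $\epsilon^{-2k}=\epsilon^{n}=\bar\omega(\epsilon)$, but it needs to be said.

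The paper omits the proof as standard, so there is no written argument to compare with.
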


For later use, we further define $\coef_\indcoef (f_{\mu},\xi)$ by
\begin{equation}
  \label{eq:FC-xi}
{\coef_\indcoef (f_{\mu},\xi)}
:=
\frac{e^{2\pi \tr (\indcoef a^{2})}}{a^{k}}
\int_{F \backslash \A} {f^\#_\mu(n(b) m(\xi) m(a))} \overline{\psi(b \indcoef)} \,db 
\cdot
\begin{cases}
  1 & k \in \Zb,\\
  \overline{\gamma(\xi, \psi)} &  k \in \Zb + \frac12\\
\end{cases}
\end{equation}
for $\indcoef\in F, a\in F_\infty^{+}$, where $\gamma(\cdot,\psi)$ is the Weil index \cite{Kudla-central}.
When $\xi$ is trivial,
this is the $\indcoef$-th Fourier coefficient ${\coef_\indcoef (f_{\mu})}$ of $f_\mu$ at infinity in \eqref{eq:Fexpdef}.

\subsection{The Kudla--Millson theta function}\label{sec:KMthetaintro}
We recall how to construct the Kudla--Millson theta function on~$\domain$ from~\cite{rosuyott} and \cite{kudla-cycles}.
We first construct a Kudla--Millson Schwartz function on each $\domain_\ind$. 
Let~$z^\ind_0\in\domain_\ind$ denotes the base point of~$\domain_\ind$ as in Section~\ref{sec:tubedomains}. 
Let $\mathfrak{h}=\mathfrak{p}+\mathfrak{k}$ be the Cartan decomposition of the Lie algebra of~$H_\ind\coloneqq H\otimes_{\sigma_\ind}\RR$.
The cotangent space of~$\domain_\ind$ at~$z^\ind_0$ is isomorphic to~$\mathfrak{p}^*$, the dual of $\mathfrak{p}$.
With respect to the pesudo-orthonormal basis $\{ e_j^\ind \}_{j=1}^{n+2}$, we have
\begin{equation}\label{eq;stdisopfrak}
\mathfrak{p}\cong\big\{\left(\begin{smallmatrix}
0 & X\\
X^t & 0
\end{smallmatrix}\right) : X\in \mathrm{Hom}(z^\ind_0,{z^\ind_0}^\perp)\cong M_{n,2}(\R) \big\}.
\end{equation}
The standard basis of~$\mathfrak{p}^*$ is given by the functionals~$\omega_{\alpha,\mu}$, whose value is the~$(\alpha,\mu)$-entry of the matrices in~$M_{n,2}(\RR)$ on the right-hand side of~\eqref{eq;stdisopfrak}.

Let~$K_\ind$ be the stabilizer of~$z^\ind_0$ with respect to the action of~$H_\ind$ on~$\domain_\ind$.
Let~$\mathcal{A}^2(\mathcal{D}_\ind)$ be the space of differential $2$-forms on $\mathcal{D}_\ind$.
The Kudla--Millson theta function on~$\domain_r$ arises from a special Schwartz function~$\varphi_{\mathrm{KM}}^{(\ind)}$ in~$\left[\mathcal{S}(V_\ind) \otimes \mathcal{A}^2(\mathcal{D}_\ind)\right]^{H_\ind}$.
The latter denotes the space of~$\mathcal{A}^2(\mathcal{D}_\ind)$-valued Schwartz functions~$\varphi$ on~$V_\ind$ that are invariant with respect to the action of~$H_\ind$, i.e.\ such that
\begin{equation}\label{eq;invariance}
   h^*\varphi(hx)=\varphi(x)\qquad\text{for all~$h\in H_\ind$ and~$x\in V_\ind$.} 
\end{equation}
The evaluation of differential forms on~$\domain_\ind$ at $z^\ind_0$ induces an isomorphism
\begin{align}
\left[\mathcal{S}(V_\ind) \otimes \mathcal{A}^2(\mathcal{D}_\ind)\right]^{H_\ind}
\cong\left[\mathcal{S}(V_\ind) \otimes {\bigwedge}^2\left(\mathfrak{p}^*\right)\right]^{K_\ind}.
\end{align}
Therefore, to define~$\varphi_{\mathrm{KM}}^{(\ind)}$ it is enough to construct it as an element in $\left[\mathcal{S}(V_\ind) \otimes \bigwedge^2\left(\mathfrak{p}^*\right)\right]^{K_\ind}$.

Recall that we write~$\genvec=\sum_j \baseco_je^\ind_j$.
The Kudla--Millson Schwartz function~$\varphiKM^{(\ind)}$ is the~$K_\ind$-invariant element of~$\Sc(V_\ind)\otimes\bigwedge^2(\mathfrak{p}^*)$ defined as
\begin{equation}\label{eq;KMschartzexplF=Q}
\varphiKM^{(\ind)}(\genvec,z_0)=\sum_{\indone,\indtwo=1}^n \big(\Gpol_{(\indone,\indtwo)}\varphi_0\big)(\genvec)
\omega_{\indone,1}\wedge\omega_{\indtwo,2},
\end{equation}
where
\begin{equation}\label{eq;stdgaus+Qab}
\varphi_0(\genvec)=\exp\Big(-\pi\sum_j \baseco_j^2\Big),
\qquad\text{and}\qquad
\Gpol_{(\indone,\indtwo)}(\genvec)=\begin{cases}
2 \baseco_\indone \baseco_\indtwo, & \text{if $\indone\neq\indtwo$,}\\
2 \baseco_\indone^2 - \frac{1}{2\pi}, & \text{otherwise.}
\end{cases}
\end{equation}

The spread of~$\varphiKM^{(\ind)}$ to the whole~$\domain_\ind$ produces an $H_\ind$-invariant element of $\Sc(V_\ind)\otimes\mathcal{A}^2(\domain_\ind)$.
It is \emph{closed} with respect to the standard differential of~$\mathcal{A}^2(\domain_\ind)$ and can be rewritten as
\begin{align}\label{eq;KMschartzexplF=Q2}
\varphiKM^{(\ind)}(\genvec,z)&=(h_\infty^{-1})^*\varphiKM^{(\ind)}(h_\infty^{-1}\genvec,z_0)
= \sum_{\indone,\indtwo=1}^n
\big(\Gpol_{(\indone,\indtwo)}\varphi_0\big)\big(h_\infty^{-1}\genvec\big)\cdot (h_\infty^{-1})^*\big(\omega_{\indone,1}\wedge\omega_{\indtwo,2}\big),
\end{align}
for every~$h_\infty\in H_\ind$ mapping~$z^\ind_0$ to~$z\in \domain_\ind$.

\begin{definition}\label{def;KMschwartzprod}
Let~$V_\infty=V_{\sigma_1}\times\dots\times V_{\sigma_d}$.
The \emph{Kudla--Millson Schwartz function}~$\varphiKM$ on~$\domain$ is defined as
\begin{equation}\label{eq:KMschwoverF}
\varphiKM (x,z) \coloneqq\bigwedge_{\ind=1}^d \varphiKM^{(\ind)} (x_\ind,z_\ind)\in \big[\mathcal{S}(V_\infty)\otimes\mathcal{A}^{2d}(\domain)\big]^{H(F_\infty)} ,
\end{equation}
where~$\varphiKM^{(\ind)}$ is the Kudla--Millson Schwartz function for~$V_{\ind}$. 
Moreover, when we use the tube domain realization $\tubedom$ of $\domain$, we will write the Kudla--Millson Schwartz function as $\varphiKM (x,Z)$.
\end{definition}

If it is clear from the context, we will sometimes drop the variable~$z$ or $Z$ from the notation.
Similarly as in \eqref{eq;KMschartzexplF=Q}, we have
\begin{align}
\varphiKM(\genvec,z_0) =
\sum_{\underline{\indone},\underline{\indone}'}
\big(\Gpol_{(\underline{\indone},\underline{\indone}')}\varphi_0\big)(\genvec) \omega_{\underline{\indone},\underline{\indone}'}, \quad
\genvec\in V_\infty,
\end{align}
where the sum is over tuples of the form~$\underline{\indone}^{\prime}=(\indone_1^{\prime},\dots,\indone_d^{\prime})$ with~$\alpha_\ind^{\prime} \in \{1,\dots,n\}$ and
\begin{equation}
  \label{eq:omega}
x\coloneqq(x_\ind)_\ind,\quad
\omega_{\underline{\indone},\underline{\indone}'} \coloneqq \wedge_\ind \lp \omega_{\indone_\ind,1}\wedge\omega_{\indone'_\ind,2} \rp,\quad
\Gpol_{(\underline{\indone},\underline{\indone}')}(\genvec) \coloneqq \prod_{\ind=1}^d \Gpol_{(\indone_\ind,\indone'_\ind)}(\genvec_\ind).
\end{equation}

We now introduce orthogonal Shimura varieties.
Let~$K\subset H(\Ab_{f})$ be a compact open subgroup that fixes~$\widehat{L}$ and acts as the identity on~$\widehat{L^\vee}/\widehat{L}$.
The complex points of the associated Shimura variety are
\begin{equation}\label{eq;SHimvar}
X_K\coloneqq H(F)\backslash \big(\domain\times H(\Ab_{f})/K\big).
\end{equation}
Recall that~$X_K$ splits into a finite union of connected components~$X_K=\coprod_j\Gamma_j\backslash\domain^+$, where $\domain^+=\prod_\ind\domain^+_\ind$ and~$\domain^+_\ind$ is one of the connected components of~$\domain_\ind$.
The arithmetic subgroups~$\Gamma_j$ are of the form~$H(F)^+\cap h_j K h_j^{-1}$ for some~$h_j\in H(\Ab_{f})$, where~$H(F)^+$ is the subgroup of~$H(F)$ preserving~$\domain^+$; see~\cite[Section~5]{kudla-remarks} for details.
We denote the connected components of~$X_K$ by~$Y_j=\Gamma_j\backslash\domain^+$.
When~$h_j=1$, we denote by~$Y_K=\Gamma_K\backslash\domain^+$ the component of~$X_K$ arising from~$\Gamma_K=H(F)^+\cap K$.

\begin{definition}
The \emph{scalar-valued Kudla--Millson theta function} associated with~$\varphi_f\in \mathcal{S}(V(\Ab_{f}))$ is
\begin{align*}
\theta(g',z,h,\varphiKM\varphi_f)
= \sum_{x\in V(F)}\omega(g')(\varphiKM(z)\varphi_f)(h^{-1}x)
\end{align*}
for~$g'\in G'_{\Ab}$, $z\in\domain$ and~$h\in H(\Ab_{f})$.
\end{definition}
In the variable~$(z,h)$, this theta function is a closed~$2d$-form on~$\domain\times H(\Ab_{F,f})/K$ invariant with respect to~$H(F)$, and therefore induces a closed form on~$X_K$.

\begin{definition}\label{def:vvKMtheta}
For any even integral $\Oc$-lattice $L\subset V$, we define the vector-valued Kudla--Millson theta function
\begin{align*}
\Theta_{\mathrm{KM}}^L(g',z,h)\coloneqq
\sum_{\mu\in L^\vee/L}\theta(g',z,h,\varphiKM\varphi_\mu)\mathfrak{e}_\mu,
\quad g'\in G'_\Ab,\ h\in H(\Ab_f).
\end{align*}
\end{definition}
If the lattice~$L$ is clear from the context, we drop it from the notation and simply write~$\Theta_{\mathrm{KM}}(g',z,h)$.
This theta function is a non-holomorphic $\mathcal{S}_L$-valued adelic Hilbert modular form of parallel weight $k=1+\frac{n}{2}$ in the variable $g'$.
Its cohomology class~$[\Theta_{\mathrm{KM}}^L]$ is \emph{holomorphic} in~$g'$, more precisely~$[\Theta_{\mathrm{KM}}^L]\in M^\#_{k,L} \otimes H^{2d}(X_K,\CC)$, and equals the generating series of the codimension~$d$ special cycles on~$X_K$.
See~\cite[Section~4]{rosuyott} for details.

\section{The Kudla--Millson lifts of Hilbert modular forms}\label{sec:KM}
In this section, we calculate the Fourier expansion of the Kudla--Millson lift of Hilbert modular forms of parallel weight $k = 1+n/2$
by means of the mixed model of the Weil representation, where the Poisson summation formula \eqref{theta PFT} allows us to unfold the theta integral.
Then, we use the expansion to prove an injectivity criterion on the Kudla--Millson lift.

Throughout this section, we assume that~$L$ splits off a hyperbolic plane over~$\mathcal{O}$, namely an~$\mathcal{O}$-lattice~
\begin{equation}
  \label{eq:Ua}
  U_{\mathfrak{a}}\coloneqq \mathfrak{a} \oplus \mathfrak{a}^{-1}\mathfrak{d}^{-1}
\end{equation}
for some fractional ideal~$\mathfrak{a}$, with quadratic form~$q_{U_{\mathfrak{a}}} (x,y) \coloneqq xy$.
Note that~$U$ is $\Zb$-unimodular and of signature~$(1,1)$ at every place of~$F$.
We define~$K_\infty^\prime$ to be inverse image of~$K_\infty$ in $G'_{F_\infty}$, 
and normalize the Haar measures on $G(F_\infty) \cong \Hb^d \times K_\infty$ to be 
$dg_\infty = \prod_{r = 1}^d \frac{dx_r dy_r}{ y_r^2} \frac{d\theta_r}2$ for $g_\infty = g_\tau u(\theta)$.
With respect to the self-dual measure $dx$ on $\Ab_f$, the group $\widehat\Gamma \subset G(\Ab_f)$ has volume $\frac1{\dis^{3/2}\zeta_F(2)}$.
It is a well-known result of Siegel \cite[section IV.1]{vandergeer} that
$$
\vol(\Gamma \backslash \SL_2(\R)^d) = \vol(K_\infty) \vol(\Gamma \backslash \Hb^d) = 2 \zeta_F(-1) (-2)^d\pi^{2d} =  2 \dis^{3/2} \zeta_F(2),
$$
which implies $\vol(G(F)\backslash G(\A)) =
\frac1{|G(F) \cap (\widehat\Gamma K_\infty)|}
\vol(\widehat\Gamma) \cdot \vol(\Gamma \backslash \Hb^d)\cdot \vol(K_\infty) =1$. 

\subsection{Adelic Kudla--Millson lifts}\label{sec;adelicKMlifts}
We write $L$ as an orthogonal direct sum
\begin{equation}\label{eq;splitLUL0}
L=U_\af\oplus L_0,
\end{equation}
where~$L_0=U_\af^\perp$,
and define $V_0 \coloneqq L_0 \otimes_{\Oc} F$.
We choose the generators $e$ and $e^\prime$ of the isotropic lines~$\ell$ and~$\ell'$, cf.\ Section~\ref{sec:decompofV}, to be the standard basis vectors of the hyperbolic~$F$-plane $U\otimes_{\Oc}F$.
We therefore have a decomposition \eqref{eq;decVF} of~$V$.

\begin{definition}
Let~$f\in S_{k,L}$ be a Hilbert cusp form of parallel weight~$k=\frac{n+2}{2}$ with respect to~$\rho_L$.
The \emph{Kudla--Millson lift of~$f$} is
\begin{align}\label{KM lift f}
  \KMliftL(f)(z,h) \coloneqq
  \int_{G(F) \backslash G(\A)} \langle f^\#(g'),\Theta_{\mathrm{KM}}^L(g',z,h) \rangle dg
\end{align}
for $z\in\domain, h\in H(\A_f)$. Here $g' \in G^\prime_{\A}$ is any preimage of $g \in G(\A)$.
We will omit $L$ from the notation if $L$ is clear in the context.
When we use the tube domain realization $\tubedom$ of $\domain$, we will write the Kudla--Millson lift of $f$ as $\KMliftL(f)(Z,h)$.
\end{definition}
Even though the cusp form $f$ and theta kernel are only functions on $G'_\A$ when $n$ is odd, their pairing in the integrand factors through $G(\A)$.
Note that due to the antilinearity of the pairing~$\langle\cdot{,}\cdot\rangle$ defined in~\eqref{hermitian pairing}, the lift
\[
\KMliftL\colon S_{k,L}\to\mathcal{A}^{2d}(X_K)
\]
is antilinear.

To use \eqref{theta PFT} to compute the Fourier expansion of the lift~$\KMliftL(f)$, we calculate the partial Fourier transform of the Schwartz functions~$\varphi=\varphi_{\mathrm{KM}}\varphi_\mu\in\mathcal{S}(V(\A))$ for $\mu\in L^\vee /L$.
Recall that we denote by~$\psi=\psi_\infty\psi_f$ the standard additive character of $\Ab$ over~$F$, with~$\psi_\infty(x)=e(\tr(x))$ for~$x\in F_\infty$.

We begin by computing the partial Fourier transform of~$\varphi_\mu$, i.e.,
\begin{align*}
\Fc_{f}(\varphi_\mu)(x_0,\eta_1,\eta_2) =
\int_{\Ab_{f}}\varphi_\mu(x_2,x_0,\eta_1)\psi_f(x_2\eta_2) \, dx_2.
\end{align*}
For any ideal $\mathfrak{a}$, denote by $\varphi_{\mathfrak{a}}$ the characteristic function of $\widehat\af \coloneqq  \mathfrak{a}\otimes \widehat{\Oc} \subset \A_f$.
Since~$L=L_0\oplus U_\af$ and~$U_\af$ is $\Zb$-unimodular, we have $L^\vee/L\cong L_0^\vee/L_0$. 
Therefore, for every coset~$\mu+L$ in~$L^\vee$ we may choose~$\mu=\mu_0\in L_0^\vee$.
This implies that $\varphi_\mu$ equals
\begin{equation}\label{eq;splitschwffpl}
\varphi_\mu(x)=\varphi_{\mu_0}(x_0)\varphi_{\mathfrak{a}}(x_1)\varphi_{(\mathfrak{ad})^{-1}}(x_2),\qquad
x=\left(\begin{smallmatrix}
 x_2 \\ x_0 \\ x_1
\end{smallmatrix}\right).
\end{equation}
and hence
\begin{equation}\label{eq;comparffin}
\begin{split}
\Fc_{f}&(\varphi_\mu)(x_0,\eta_1,\eta_2) 
= \varphi_{\mu_0}(x_0)\varphi_{\mathfrak{a}}(\eta_1)
\int_{\Ab_{f}} \varphi_{(\mathfrak{ad})^{-1}}(x_2) \psi_f(x_2\eta_2)\, dx_2 \\
&=\vol (\widehat{(\mathfrak{ad})^{-1}}) \varphi_{\mu_0}(x_0)\varphi_{\mathfrak{a}}(\eta_1) \varphi_{\mathfrak{a}}(\eta_2)
= \sqrt{\dis} \Nm(\af) \varphi_{\mu_0}(x_0)\varphi_{\mathfrak{a}}(\eta_1) \varphi_{\mathfrak{a}}(\eta_2).
\end{split}
\end{equation} 
We now proceed to compute the partial Fourier transform of~$\varphiKM(x,z)$ at the Archimedean places, namely,
\begin{align*}
\mathcal{F}_\infty(\varphi_\infty)(x_0,\eta_1,\eta_2) =
\int_{F_{\infty}} \varphi_\infty(x_2,x_0,\eta_1)\psi_\infty(x_2\eta_2)\, dx_2,\quad \varphi_\infty\in\mathcal{S}(V(F_\infty)).
\end{align*}

\begin{lemma}\label{lemma; FtransfKMschwartz}
Let~$z\in\domain$ and let~$h_\infty=\nP(b) \mP(\aa,h_0)\in P(F_\infty)$ be such that~$h_\infty(z_0)=z$. Then the partial Fourier transform  of~$\varphiKM(x,z)$ is
\begin{align*}
\mathcal{F}_\infty(\varphiKM(z))(x_0,\eta_1,\eta_2)
 & =\lvert \aa\rvert \cdot \ebf\Big(\big((b,x_0) + q(b)\eta_1\big)\eta_2\Big)
\varphi_0(h_0^{-1}(x_0+b\eta_1)) e^{-\pi a^2( \eta_1^2 + \eta_2^2)}
  \\
&\quad\times \sum_{\underline{\indone},\underline{\indtwo}}
  P_{(\underline{\indone},\underline{\indtwo})}(\aa \cdot (\eta_1 - i \eta_2))
  \widetilde{\Gpol}_{(\underline{\indone},\underline{\indtwo})} (h_0^{-1}(x_0+b\eta_1))
(h_\infty^{-1})^*\omega_{\underline{\indone},\underline{\indtwo}},
\end{align*}
where
$P_{(\underline{\indone},\underline{\indtwo})}(\eta) = \prod_{r=1}^d P_{(\indone_r,\indone'_r)}(\eta_r), 
\widetilde{\Gpol}_{(\underline{\indone},\underline{\indtwo})}(x_0) = \prod_{r=1}^d \widetilde{\Gpol}_{(\indone_r,\indone'_r)}((x_0)_r)$ with
\begin{equation}
  P_{(\indone,\indone')}(\eta)
\coloneqq
\begin{cases}
\eta^2
& \text{if~$\indone=\indone'=1$,}\\
\eta
&\text{if~$\indone=1 <\indone'$, }\\
  \eta
&\text{if~$\indone'=1 <\indone$, } \\
1
& \text{if~$\indone,\indone'>1$,}
\end{cases}
  \widetilde{\Gpol}_{(\indone,\indone')}(y)
\coloneqq
\begin{cases}
1
& \text{if~$\indone=\alpha'=1$,}\\
\sqrt{2}y_{\alpha'}
&\text{if~$\indone=1 <\alpha'$, } \\
\sqrt{2}y_{\indone} 
&\text{if~$\alpha'=1 <\indone$, } \\
  \Gpol_{(\indone,\alpha')}(y)
& \text{if~$\indone,\alpha'>1$,}
\end{cases}
\label{eq:tildeQ}
\end{equation}
and $y=\sum_{j=2}^{n+1} \baseco_je^r_j\in \sigma_r(V_{0})$. 

\end{lemma}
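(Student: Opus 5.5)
The plan is to reduce to the base point $z_0$ and to a single real place, and then carry out a one-variable Gaussian Fourier computation.

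\emph{Step 1: reduction to the base point.} By the $H(F_\infty)$-invariance \eqref{eq;invariance} of $\varphiKM$ we have $\varphiKM(x,z)=(h_\infty^{-1})^*\varphiKM(h_\infty^{-1}x,z_0)$. On the Schwartz-function side, $x\mapsto\varphi(h_\infty^{-1}x)$ is exactly the Weil action of $h_\infty$. So the mixed-model formulas \eqref{mixed weil} for $\nP(b)$ and $\mP(\aa,h_0)$, applied in turn, give
\begin{align*}
\mathcal{F}_\infty(\varphiKM(z))(x_0,\eta_1,\eta_2)
&=\ebf\big(((b,x_0)+q(b)\eta_1)\eta_2\big)\,|\aa|\,(h_\infty^{-1})^*\mathcal{F}_\infty(\varphiKM(z_0))\big(h_0^{-1}(x_0+b\eta_1),\aa\eta_1,\aa\eta_2\big).
\end{align*}
The pullback $(h_\infty^{-1})^*$ acts only on the form coefficients $\omega_{\underline\alpha,\underline\alpha'}$, so it commutes with the partial Fourier transform. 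It therefore suffices to show
\[
\mathcal{F}_\infty(\varphiKM(z_0))(y,\eta_1,\eta_2)=\varphi_0(y)\,e^{-\pi(\eta_1^2+\eta_2^2)}\sum P_{(\underline\alpha,\underline\alpha')}(\eta_1-i\eta_2)\,\widetilde Q_{(\underline\alpha,\underline\alpha')}(y)\,\omega_{\underline\alpha,\underline\alpha'} .
\]
Both $\varphiKM(z_0)$ and $\psi_\infty$ factor over the places $r$, so this reduces to a single real place with $F=\mathbb{R}$.

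\emph{Step 2: coordinates at $z_0$.} At one place, $e_1=(e+e')/\sqrt2$ and $e_{n+2}=(e-e')/\sqrt2$. Hence for $x=x_2e+x_0+x_1e'$ we get $y_1=(x_1+x_2)/\sqrt2$ and $y_{n+2}=(x_2-x_1)/\sqrt2$, while $y_2,\dots,y_{n+1}$ are the coordinates of $x_0$. Then $y_1^2+y_{n+2}^2=x_1^2+x_2^2$, so $\varphi_0(x)=\varphi_0(x_0)e^{-\pi(x_1^2+x_2^2)}$. The only dependence on $x_2$ in the polynomial $Q_{(\alpha,\alpha')}$ enters through $y_1$ (when $\alpha=1$ or $\alpha'=1$). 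We must also confirm that the index $\alpha\in\{1,\dots,n\}$ of $\omega_{\alpha,\mu}$ ranges over $e_1,\dots,e_n$, so that index $1$ is the $U$-direction and indices $>1$ are $V_0$-directions. This matches the definition of $\widetilde Q$, where $y_\alpha$ is read in the basis $e_2,\dots,e_{n+1}$ of $V_0$.

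\emph{Step 3: the Gaussian integrals.} Set $\eta_1=x_1$. We need the Fourier transforms of $e^{-\pi x_2^2}$, $(x_1+x_2)e^{-\pi x_2^2}$ and $\big((x_1+x_2)^2-\frac1{2\pi}\big)e^{-\pi x_2^2}$ against $e(x_2\eta_2)$. Using $\int e^{-\pi t^2}e(t\eta)\,dt=e^{-\pi\eta^2}$ and differentiating in $\eta$, so that $t\mapsto \frac{1}{2\pi i}\partial_\eta$, we get:
\begin{itemize}
\item $\int t\,e^{-\pi t^2}e(t\eta)\,dt=i\eta\, e^{-\pi\eta^2}$;
\item $\int t^2e^{-\pi t^2}e(t\eta)\,dt=\big(\tfrac1{2\pi}-\eta^2\big)e^{-\pi\eta^2}$.
\end{itemize}
Consequently:
\begin{itemize}
\item $(x_1+x_2)$ transforms to $(\eta_1+i\eta_2)e^{-\pi\eta_2^2}$;
\item $2\cdot\frac12(x_1+x_2)^2-\frac1{2\pi}$ transforms to $(\eta_1+i\eta_2)^2e^{-\pi\eta_2^2}$, since the $\frac1{2\pi}$ terms cancel.
\end{itemize}
This gives the cases $\alpha=\alpha'=1$ (yielding $\eta^2$ with $\widetilde Q=1$) and exactly one index equal to $1$ (yielding $\eta\cdot\sqrt2 y_{\alpha'}$, from $2y_1y_{\alpha'}=\sqrt2(x_1+x_2)y_{\alpha'}$). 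When both indices exceed $1$, $Q$ is independent of $x_2$ and we simply get $Q_{(\alpha,\alpha')}(y)$.

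\emph{Main obstacle.} The delicate point is signs and conventions: whether the result is $\eta_1+i\eta_2$ or $\eta_1-i\eta_2$. This depends on the sign convention in $\psi$, the orientation choice $e_{n+2}=(e-e')/\sqrt2$, and which of $x_1,x_2$ pairs with $\eta_1$. I would recheck this against the definition \eqref{eq:passschömixmod}, since the statement asserts $\eta_1-i\eta_2$; a sign flip $\eta_2\mapsto-\eta_2$ or $x_2\mapsto -x_2$ in the convention would reconcile the two. I would also verify the scaling $\aa(\eta_1-i\eta_2)$ and the factor $|\aa|$, which come directly from \eqref{mixed weil}.
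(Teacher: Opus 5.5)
Your route is the paper's. Appendix~A.1 likewise factors over the real places, writes $\Gpol_{(\alpha,\alpha')}$ in the coordinates $x_1,x_2,y_2,\dots,y_{n+1}$ with $y_1=(x_1+x_2)/\sqrt2$, and computes the Gaussian moments in $x_2$. It then transports the result from $z_0$ to $z$ using \eqref{mixed weil}. Your Step~1 bookkeeping is correct: the factor $|\aa|$, the phase, the argument $h_0^{-1}(x_0+b\eta_1)$ and the rescaling $(\aa\eta_1,\aa\eta_2)$ all check out.

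On the sign you flagged, your $\eta_1+i\eta_2$ is the correct result under the conventions the paper fixes, and no re-reading of those conventions recovers $\eta_1-i\eta_2$:
\begin{itemize}
\item The choice $e_1=(e+e')/\sqrt2$ is forced, since $(e-e')/\sqrt2$ has norm $-1$.
\item Which variable pairs with $\eta_1$, and the character $\psi_\infty=\ebf$, are fixed by the paper's definitions.
\item The paper's own formula \eqref{eq;PFTz0}, with its cross term $-2i\eta_1\eta_2$, evaluates $\int x_2e^{-\pi x_2^2}\psi(x_2\eta_2)\,dx_2$ as $-i\eta_2e^{-\pi\eta_2^2}$. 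That is the transform against $\psi(-x_2\eta_2)$, not $\psi(x_2\eta_2)$.
\end{itemize}

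An independent check is the $K_\infty$-weight. By \eqref{mixed weil}, $u(\theta)$ acts on the $\eta$-variables by $\eta\mapsto\eta u(\theta)$, which multiplies $\eta_1\pm i\eta_2$ by $e^{\pm i\theta}$. The Gaussian on $V_0$, of signature $(n-1,1)$, has weight $\frac{n-2}{2}$. So the $(1,1)$-component has weight $\frac{n-2}{2}\pm 2$. Only the plus sign gives $k=\frac{n+2}{2}$, which is the weight every $\Gpol_{(\alpha,\alpha')}\varphi_0$ has and the one $\chi_{2k}$ requires for $\langle f^\#,\Theta_{\mathrm{KM}}\rangle$ to be $K'_\infty$-invariant.

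So the lemma should read $P_{(\underline{\alpha},\underline{\alpha}')}(\aa(\eta_1+i\eta_2))$. Downstream, this only flips the sign of the terms in \eqref{eq:Wx0} with exactly one index equal to $1$. The $(1,1)$-term is evaluated at $\eta_1=0$, where both signs give the same value. Hence Remark~\ref{rmk:2} and the injectivity results are unaffected.
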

\begin{proof}
This follows from computing the partial Fourier transform of $\Gpol_{(\underline{\indone},\underline{\indtwo})}\varphi_0$, and we put the computation in Appendix \ref{sec:aphintproof}.
\end{proof}

To obtain the Fourier expansion of Kudla--Millson lift, we apply Poisson's summation~\eqref{theta PFT} to rewrite
\begin{align*}
\Theta_{\mathrm{KM}}(g',Z,h)=\sum_{\mu\in\dual{L}/L} \sum_{\substack{x_0\in V_0 \\ \eta\in F^2}} 
\omega_0(g')\Fc\big(\varphiKM(Z) \omega(h)\varphi_\mu\big)(x_0,\eta g)\mathfrak{e}_\mu,\quad h\in H(\A_f), Z\in\tubedom,
\end{align*}
where~$g\in G(\A)$ is the image of $g'\in G'_\A$ under the projection. 
A set of representatives for the action of~$G(F)$ by right-multiplication on~$F^2$ is given by
the zero tuple~$(0,0)$ with stabilizer~$G(F)$, and the tuple~$(0,1)$ with stabilizer $N(F)$.
Then we decompose~$\Theta_{\mathrm{KM}}=\Theta_{\mathrm{KM},0} + \Theta_{\mathrm{KM},1}$, where
\begin{align}\label{decomoposed theta functions}
\begin{split}
\Theta_{\mathrm{KM},0} (g',Z,h)  &\coloneqq   \sum_{\mu\in\dual{L}/L}\sum_{x_0 \in V_0} \omega_0(g') \Fc(\varphiKM(Z)\omega(h)\varphi_\mu)(x_0,(0, 0) )\mathfrak{e}_\mu,\\
\Theta_{\mathrm{KM},1} (g',Z,h) &\coloneqq \sum_{\mu\in\dual{L}/L}\sum_{x_0 \in V_0} \sum_{\gamma \in N(F) \backslash G(F)}
\omega_0(g') \Fc(\varphiKM(Z)\omega(h)\varphi_\mu)(x_0,(0, 1)\gamma g)\mathfrak{e}_\mu.
\end{split}
\end{align}
It is easy to see that $\Theta_{\mathrm{KM},i}$ is~$G(F)$-invariant and has the same weight as $\Theta_{\mathrm{KM}}$. 
Now the Kudla--Millson lift of~$f$ decomposes as $\KMliftL(f)(Z,h) = \KMliftL_{,0}(f)(Z,h) + \KMliftL_{,1}(f)(Z,h)$, where
\begin{align}\label{decomposed theta lift}
  \KMliftL_{,i}(f)(Z,h) &=
                          \int_{G(F) \backslash G(\Ab)} \langle f^\#(g') , \Theta_{\mathrm{KM},i}(g',Z,h) \rangle dg,\quad i=0,1.
\end{align}
By Lemma \ref{lemma; FtransfKMschwartz} and the formula for $h_Z$ in \eqref{eq;explchofh_Z}, $\Theta_{\mathrm{KM},0} (g',Z,h) = \Theta_{\mathrm{KM},0} (g',Y,h)$ and hence 
$$\KMliftL_{,0}(f)(Z,h) = \KMliftL_{,0}(f)(Y,h)$$ 
is independent on $X$.
Now we calculate $\KMliftL_{,i}(f)(Z,h)$ to get the Fourier expansion of the Kudla--Millson lift.

\begin{theorem}\label{Fourier expansion}
Let $f\in S_{k,L}$ be a Hilbert cusp form of parallel weight $k=\frac{n+2}{2}$. 
Suppose that~$L$ splits off~$U_\af
$ as in~\eqref{eq;splitLUL0}.
For every~$Z=X+iY\in\mathcal{H}$ 
and  $\zeta\in\A_f^\times$, the Fourier expansion of~$\KMliftL(f)(Z{,}\mP(\zeta))$ with respect to the cusp corresponding to the isotropic line~$\af \subset U_\af \subset V$ is
\begin{align} \label{KM expansion}
\begin{split}
  \KMliftL(f)(Z,\mP(\zeta)) &= \KMliftL_{,0}(f)(Y,\mP(\zeta))\\
  &\quad + 
  \sum_{\substack{ x \in V_0\\ q(x ) \gg 0 }}
C^L_\af(x , \zeta; f)
  \e\lp (X,x )\rp
  \sum_{\underline{\indone},\underline{\indtwo}}
  W_{(\underline{\indone},\underline{\indtwo})}   (Y, x )
      (h_Z^{-1})^*\omega_{\underline{\indone},\underline{\indtwo}},
\end{split}
\end{align}
where
\begin{equation}
  \label{eq:CL}
  \begin{split}
    C^L_\af(x , \zeta; f)
    &\coloneqq 
      \frac{(2\pi)^d \Nm(\mathfrak{a})^{}|\zeta|_\A}
      {\dis \zeta_F(2)}
      \sum_{(\delta, \xi)\in D(x , \zeta; \mathfrak{a})} |\xi|_\A^{\frac{n}{2}} \chi_{V_0} (\xi)
| \Nm(\indco)|^{n-1}
      \overline{\coef_{q(x /\indco)} (f_{ \xi x /\indco},\xi)}, \\
    D(x ,\zeta; \mathfrak{a}) \coloneqq & \{ (\delta,\xi) \in \Oc^\times \backslash F^\times \times \Cl(F) : \xi x /\delta\in \widehat{L_0^\vee}, \delta \in \xi \zeta^{-1} \widehat{\mathfrak{a}} \},
  \end{split}  
\end{equation}
and
\begin{equation}
  \label{eq:Wx0}
  \begin{split}
  W_{(\underline{\indone},\underline{\indtwo})}   (Y, x )
    \coloneqq 
|q(Y)|^{1/2}
&      \int_{(\R^{+})^d}
      \widetilde{\Gpol}_{(\underline{\indone},\underline{\indone}')}(h_Y^{-1} t x)
      P_{(\underline{\indone},\underline{\indone}')}(-i |q(Y)|^{1/2} t^{-1})
    \\
    &
    \quad
      \times t^{n-1} e^{-\pi \tr(4 q(x_{Y^\perp} ) t^2 + |q(Y)|/t^2)} \frac{dt}t.
  \end{split}
\end{equation}
The functions $ P_{(\underline{\indone},\underline{\indone}')}$ and $ \widetilde{\Gpol}_{(\underline{\indone},\underline{\indone}')}$ are defined in \eqref{eq:tildeQ}, and $q(x_{Y^\perp}) = q(x) + \frac{(x, Y)^2}{4|q(Y)|}$ is never trivial as $q(x)$ is totally positive. 
\end{theorem}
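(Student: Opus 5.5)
The argument is the standard unfolding, carried out with the decomposition $\KMliftL = \KMliftL_{,0}+\KMliftL_{,1}$ from \eqref{decomposed theta lift}. The term $\KMliftL_{,0}$ is the constant term, already seen to depend only on $Y$, so only $\KMliftL_{,1}(f)(Z,\mP(\zeta))$ remains. I would unfold the sum over $N(F)\backslash G(F)$ in \eqref{decomoposed theta functions} against the $G(F)$-invariance of $f^\#$ (Lemma~\ref{lemma:basicpropfsharp}), which gives
\[
\KMliftL_{,1}(f)(Z,\mP(\zeta))=\int_{N(F)\backslash G(\A)}\Big\langle f^\#(g'),\sum_{\mu}\sum_{x_0\in V_0}\omega_0(g')\Fc\big(\varphiKM(Z)\omega(\mP(\zeta))\varphi_\mu\big)(x_0,(0,1)g)\,\ef_\mu\Big\rangle\,dg .
\]
Absolute convergence, needed to justify the unfolding, should follow from cuspidality of $f$ together with the Gaussian decay in Lemma~\ref{lemma; FtransfKMschwartz}.

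Next I would use the Iwasawa decomposition $g=n(b)m(a)k$, where $k\in \SL_2(\widehat\Oc)K_\infty$ and $a\in\A^\times$. The formulas \eqref{mixed weil} give the $\Fc$-side explicitly. The $m(\zeta)$-action contributes $|\zeta|_\A$ and rescales $(\eta_1,\eta_2)$ by $\zeta$, which turns $\widehat{\af}$ into $\zeta^{-1}\widehat\af$ in \eqref{eq;comparffin}. The vector $(0,1)n(b)m(a)$ equals $(0,a^{-1})$, so the $(x_0,\eta)$-dependence sits in $x_0$ and $\eta_2=a^{-1}$. The $K$-types cancel: at finite places by the $\bar\omega$-equivariance of $f^\#$, and at infinite places by weight $k$ on both sides.

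The integral over $b\in F\backslash\A$ picks out the Fourier coefficient of $f^\#$ at index $q(x_0)$, because $\omega_0(n(b))$ multiplies by $\psi(bq(x_0))$. This forces $q(x_0)\gg0$, since $f$ is a cusp form and the archimedean integral vanishes otherwise. I would then split $a=a_\infty a_f$ and write $a_f\in F^\times\,\xi\,\widehat\Oc^\times$ with $\xi\in\Cl(F)$. The global part $\delta\in F^\times/\Oc^\times$ is absorbed by replacing $x_0$ with $x=\delta x_0$ (or $x_0=x/\delta$). The finite Schwartz function $\varphi_{\mu_0}(\xi^{-1}\cdots)\varphi_{\zeta^{-1}\af}$ becomes the condition $(\delta,\xi)\in D(x,\zeta;\af)$, and the coefficient becomes $\overline{\coef_{q(x/\delta)}(f_{\xi x/\delta},\xi)}$ by the definition \eqref{eq:FC-xi}.

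The factors $|\xi|_\A^{n/2}\chi_{V_0}(\xi)$ come from $\omega_0(m(\xi))$. This involves the Weil index and the quadratic character of $V_0$, and it is where the $\overline{\gamma(\xi,\psi)}$ normalization in \eqref{eq:FC-xi} enters when $k\notin\Zb$. The factor $|\Nm\delta|^{n-1}$ comes from $|a|^{n/2}$ and $|a|^{-1}$ weights together with the modular character $|a|^{-2}$ of the Haar measure on $N\backslash G$. The constant $\frac{(2\pi)^d\Nm(\af)}{\dis\zeta_F(2)}$ collects $\sqrt\dis\Nm\af$ from \eqref{eq;comparffin}, $\vol(\widehat\Gamma)$, and the archimedean normalization $e^{2\pi\tr(\nu a^2)}/a^k$.

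The remaining archimedean integral is over $a_\infty\in(\R^+)^d$. I insert Lemma~\ref{lemma; FtransfKMschwartz} with $h_\infty=h_Z$, $b=X$, $\aa=|q(Y)|^{1/2}$, $h_0=h_Y$, $\eta_1=0$, $\eta_2=a^{-1}$. This produces $\ebf((X,x))$, the polynomial $P(-i|q(Y)|^{1/2}a^{-1})$, and $\widetilde\Gpol(h_Y^{-1}ax)$ after substituting $t=a_\infty$. For the Gaussian, $\varphi_0(h_Y^{-1}tx)$ combined with $e^{-2\pi\tr(q(x)t^2)}$ from the holomorphic coefficient gives $e^{-\pi\tr(4q(x_{Y^\perp})t^2)}$, because $\varphi_0(h_Y^{-1}y)=e^{-2\pi(q(y)+(y,Y)^2/2|q(Y)|)}$. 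Together with $e^{-\pi|q(Y)|/t^2}$ this yields $W$ in \eqref{eq:Wx0}.

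I expect the main obstacle to be the exact bookkeeping in the class-group step: the metaplectic and Weil-index factors, and the precise powers of $|\xi|_\A$ and $|\delta|$. I would handle these by treating $k\in\Zb$ first and checking the result against the case $F=\Q$.
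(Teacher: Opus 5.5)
Your proposal is correct and follows essentially the paper's own proof. The steps match: unfold $\KMliftL_{,1}$ to $N(F)\backslash G(\A)$; apply the Iwasawa decomposition with \eqref{mixed weil} and \eqref{eq;comparffin}; let the $b$-integral produce $\overline{\coef_{q(x_0)}(f_{\mu_0},\xi)}$; split $a=\delta^{-1}\xi a_\infty$ over $\Cl(F)$; substitute $x=\delta x_0$, $t=a_\infty/|\delta|$; and finish with Lemma~\ref{lemma; FtransfKMschwartz} at $h_\infty=h_Z$ and the Gaussian identity. The substitution with $\delta$ rather than $|\delta|$ is legitimate because $\mathcal{F}_\infty(\varphiKM(Z))(x_0,0,\eta_2)$ is invariant under $(x_0,\eta_2)\mapsto(-x_0,-\eta_2)$ at each real place, which absorbs the signs of $\delta$.

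Two items of your bookkeeping need correcting. The powers of $a_\infty$ are $a_\infty^{1+n/2}$ from the normalization of $\coef_\nu$ in \eqref{eq:FC-xi}, $a_\infty^{n/2}$ from $\omega_0$, and $a_\infty^{-2}$ from the measure. Together they give $a_\infty^{n-1}\,d^\times a_\infty$, hence $|\Nm(\delta)|^{n-1}$ after rescaling. The factor $(2\pi)^d$ comes from $\vol(K_\infty)=\pi^d$ and $\prod_r dy_r/y_r^2=2^d\,da_\infty/a_\infty^3$, not from the coefficient normalization.
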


\begin{remark}
  \label{rmk:1}
     Theorem~\ref{Fourier expansion} provides the Fourier expansion of $\KMlift(f)(Z,\mP(\zeta))$ as a function of~$Z\in\tubedom$.
    This is sufficient for the applications below.
    More generally, one may compute the Fourier expansion of~$\KMlift(f)(Z,h)$, for~$h\in H(\A_f)$, by replacing each finite Schwartz function~$\varphi_\mu$ by~$\omega(h)\varphi_h$ and computing its partial Fourier transform in the mixed model. 
    For general~$h$, the resulting expression need not admit the simple lattice-theoretic factorization appearing in Theorem~\ref{Fourier expansion}.
    However, if~$h$ belongs to the parabolic~$P(\A_f)$, this partial Fourier transform can be computed explicitly using the mixed-model formulas in~\eqref{mixed weil}.
  \end{remark}
  \begin{remark}
      \label{rmk:2}
      The differential forms $    \omega_{\underline{\indone},\underline{\indtwo}}$ are linearly independent. When $\underline{\indone} = \underline{\indtwo} = (1, \dots, 1)$, the function $ \widetilde{\Gpol}_{(\underline{\indone},\underline{\indone}')}$ is trivial, and the function $
        W_{(\underline{\indone},\underline{\indtwo})} (Y, x)$
simplifies to
    \begin{align*}
        W_{(\underline{\indone},\underline{\indtwo})} (Y, x)
      &=
         4^{\frac{d(7-n)}4} \Nm (|q(Y)|)^{\frac{n+3}4}
         \Nm(q(x_{Y^\perp}))^{-\frac{n-3}4}
         \prod_{1 \le r \le d}
         K_{(n-3)/2}(4\pi \sqrt{q(x_{Y^\perp})_r |q(Y)_r|}),
    \end{align*}
by Equation (29) in section 4.5 of \cite{EMOT}.
Here $K_\nu$ is the $K$-Bessel function. 
    So the differential form $W(Y, x)$ is never identically 0. 
\end{remark}
\begin{proof}[Proof of Theorem~\ref{Fourier expansion}]
Since $f^\#$ is left $G(F)$-invariant, we unfold $\KMliftL_{,1}(f)(Z,h)$ as
\begin{align*}
  &
    \int_{G(F) \backslash G(\Ab)} 
    \sum_{\mu\in L^\vee/L}\sum_{x_0 \in V_0}
    \sum_{\gamma \in N(F) \backslash G(F)}
\overline{    f_\mu^\#(\gamma g')}
    \omega_0(\gamma g')
    \Fc(\varphiKM (Z)\omega(h)\varphi_\mu)(x_0,(0, 1)\gamma g) 
    dg \\
  =&
     \int_{N(F) \backslash G(\Ab)} 
    \sum_{\mu\in L^\vee/L} \sum_{x_0 \in V_0}
\overline{f_\mu^\#(g')}
   \omega_0(g')
   \Fc(\varphiKM (Z)\omega(h)\varphi_\mu)(x_0,(0, 1) g)
   dg.
\end{align*} 
Here $g' \in G^\prime_{\A}$ is any preimage of $g \in G(\A)$. 
Using the Iwasawa decomposition $G(\Ab) = N(\Ab) M(\Ab) \widehat{\Gamma} K_\infty$ and \eqref{mixed weil},  
the lift $\KMliftL_{,1}(f)(Z,m_P(\zeta))$ becomes 
\begin{equation*}
\begin{split}
  &
\vol(\widehat\Gamma)
    \sum_{\substack{\mu\in L^\vee/L\\x_0 \in V_0} }
    \int_{N(F) \backslash N(\A)}
 \int_{\A_f^\times/\hat\Oc^\times \times F_\infty^{\times, +}}  
\overline{f^\#_\mu(g_1 m(a) )} \\
  & \hspace{2in} \times    (\omega_0(g_1 m(a) ) \Fc(\varphiKM(Z)\varphi_\mu))(x_0,(0, \zeta a^{-1} ) )
\vol( K_\infty)
    d^\times a dg_1\\
=
  & 
    \vol(\widehat\Gamma)    |\zeta|_\A
    \sum_{\substack{\mu\in L^\vee/L\\x_0 \in V_0} }
    \sum_{\indco\in \Oc^\times \backslash F^\times} \sum_{\xi\in \Cl(F)} 
    \int_{F_\infty^{\times, +}}
    \int_{F \backslash \A}
    \overline{f^\#_\mu(n(b) m(\xi a_\infty) )} \psi(q(x_0)b) db
  \\
  &\hspace{2in} \times    (\omega_0(m(\xi a_\infty) ) \Fc(\varphiKM(Z)\varphi_\mu))(x_0,(0,\zeta \indco \xi^{-1} a_\infty^{-1}) )
(2\pi)^d    \frac{da_\infty}{a_\infty^3}.
\end{split}
\end{equation*}
where we have used \eqref{eq:Cl} to write $a = \indco^{-1} \xi a_\infty$ and $2^d\frac{da_\infty}{a_\infty^3} = \prod_{r = 1}^d \frac{dy_r}{y_r^2}$ for $a_\infty = (\sqrt{y_r})_{1 \le r \le d}$. 
Substituting Equation \eqref{eq;comparffin} and Equation \eqref{eq:FC-xi}, by the action of the Weil representation \cite[Equation (1.6)]{Kudla-central}, we obtain
\begin{align*}   
\begin{split}
= & (2\pi)^d\vol(\widehat\Gamma )    |\zeta|_\A
    \sum_{\substack{\mu\in L^\vee/L\\x_0 \in V_0} }
    \sum_{\indco\in \Oc^\times \backslash F^\times} \sum_{\xi\in \Cl(F)}
    |\xi|_\A^{\frac{n}{2}} \chi_{V_0} (\xi) 
    \int_{F_\infty^{\times, +}}
    a_\infty^{1 + n/2}e^{-2\pi \tr(q(x_0)a_\infty^2)} \overline{\coef_{q(x_0)} (f_{\mu},\xi)} \\
&\qquad \times \Fc_f(\varphi_\mu)(\xi x_0, (0,\zeta \indco \xi^{-1} ))
\times  a_\infty^{n/2} \Fc_\infty(\varphiKM(Z))(a_\infty x_0,(0,\indco  a_\infty^{-1}) )   \frac{da_\infty}{a_\infty^3}\\      
=& (2\pi)^d\sqrt{\dis} \Nm(\af) \vol(\widehat\Gamma)    |\zeta|_\A
    \sum_{\xi\in \Cl(F)} |\xi|_\A^{\frac{n}{2}} \chi_{V_0} (\xi) 
    \sum_{\mu_0 \in L_0^\vee/L_0}
    \sum_{\substack{x_0 \in V_0 \\ \xi x_0 \in \mu_0 +\widehat{L_0}}}
    \sum_{\substack{\indco\in \Oc^\times \backslash F^\times\\ \zeta\xi^{-1} \indco \in \widehat{\mathfrak{a}}}} \overline{\coef_{q(x_0)} (f_{\mu_0},\xi)}  \\
&\quad \times    \int_{F_\infty^{\times, +}} a_\infty^{n-1} e^{-2\pi\tr(q(x_0)a_\infty^2)}
    \Fc_\infty(\varphiKM(Z))(a_\infty x_0,(0,\indco  a_\infty^{-1}) )   \frac{da_\infty}{a_\infty}\\
=& (2\pi)^d\sqrt{\dis} \Nm(\af) \vol(\widehat\Gamma)    |\zeta|_\A
    \sum_{x \in V_0} \sum_{(\indco, \xi) \in D(x, \zeta; \af)} |\Nm(\indco)|^{n-1} |\xi|_\A^{\frac{n}{2}} \chi_{V_0} (\xi) 
    \overline{\coef_{q(x)/\indco^2} (f_{\xi x/\indco},\xi)} \\
&\quad \times  \int_{F_\infty^{\times, +}} t^{n-1} e^{-2\pi\tr(q(x)t^2)} \Fc_\infty(\varphiKM(Z))(tx,(0,t^{-1}) )   \frac{dt}{t}.
\end{split}
\end{align*} 
The last step above used the substitution $x \coloneqq  |\indco| x_0, t = a_\infty/|\indco|$, and $ \Fc_\infty(\varphiKM(Z))(x_0,0,\eta_2 ) $ is an even function by Lemma~\ref{lemma; FtransfKMschwartz}. 
Since $f$ is a cusp form, the sum over $x \in V_0$ only includes those vectors with totally positive norm. 
Applying Lemma~\ref{lemma; FtransfKMschwartz} with $h_\infty = h_Z$ in \eqref{eq;explchofh_Z} expresses the integral on the last line as
\begin{align*}
  |q(Y)|^{1/2}
  \sum_{\underline{\indone},\underline{\indtwo}} \ebf((X,x)) (h_Z^{-1})^*\omega_{\underline{\indone},\underline{\indtwo}}
& \int_{F_\infty^{\times, +}} t^{n-1} e^{-2\pi\tr(q(x)t^2)} \varphi_0(h_Y^{-1}(t x)) e^{-\pi |q(Y)|/t^2} \\
& \times
  P_{(\underline{\indone},\underline{\indtwo})}( - i|q(Y)|^{1/2}  /t)
  \widetilde{\Gpol}_{(\underline{\indone},\underline{\indtwo})}(h_Y^{-1}(t x))
  \frac{dt}{t}.
\end{align*}
It is straightforward to verify that
$$
e^{-2\pi\tr(q(x)t^2)}
\varphi_0(h_Y^{-1}(t \cdot  x))
=
e^{-4\pi\tr(q(x_{Y^\perp})t^2)}
$$
since $  \varphi_0(h_Y^{-1}  x) = e^{-2\pi(q(x_{Y^\perp}) - q(x_Y)) }$ by the definition of $h_Y$ in \eqref{eq;explchofh_Z} and $q(x) = q(x_Y) + q(x_{Y^\perp})$.
Putting these together finishes the proof.
\end{proof}

\subsection{Injectivity}
In this section we illustrate how to deduce from Theorem~\ref{Fourier expansion} an injectivity criterion for the Kudla--Millson lift~$\KMliftL$ for general totally real extensions~$F$ of~$\QQ$.
This generalizes the aforementioned injectivity results to the case of Hilbert modular forms.
We start with a lemma concerning the set $D(x_0, \zeta) \coloneqq  D(x_0,\zeta; \Oc)$ in defining the coefficient~$C^L_\af$ in \eqref{eq:CL}. 

\begin{lemma}\label{induction number of divisors}
  For any $(\delta_1,\xi_1) \in D(x_0,\zeta)$, the following map
  $$
  \iota(\delta_1, \xi_1): D(x_0/\delta_1, \xi_1) \to D(x_0, \zeta),~
  (\delta,\xi) \mapsto (\delta\delta_1,\xi)
  $$
  is well-defined and injective. 
  Furthermore, $(\delta_2, \xi_2) \in D(x_0,\zeta)$ is in the image of $\iota(\delta_1, \xi_1)$ if and only if
  $\delta_2/\xi_2 \in (\delta_1/ \xi_1) \widehat{\Oc}$. 
  In particular, if $\zeta x_0 \in \widehat{L_0^\vee}$, then $(1, \zeta) \in D(x_0, \zeta)$ and $\iota(\delta_1,\xi_1)$ is not surjective for any $(\delta_1,\xi_1) \neq (1, \zeta)$. 
\end{lemma}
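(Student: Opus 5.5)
Unwinding the definition, with $\mathfrak a=\Oc$,
\[
D(x_0,\zeta)=\{(\delta,\xi)\in \Oc^\times\backslash F^\times\times \Cl(F) : \xi x_0/\delta\in \widehat{L_0^\vee},\ \delta\in \xi\zeta^{-1}\widehat{\Oc}\}.
\]
Here $\xi$ runs over the fixed representatives of $\Cl(F)$. The plan is to verify both membership conditions by direct manipulation of $\widehat{\Oc}$-modules, and to read off injectivity and the image from the componentwise form of $\iota(\delta_1,\xi_1)$.

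\emph{Well-definedness.} Let $(\delta,\xi)\in D(x_0/\delta_1,\xi_1)$, so that $\xi x_0/(\delta\delta_1)\in\widehat{L_0^\vee}$ and $\delta\in \xi\xi_1^{-1}\widehat\Oc$. The first condition for $(\delta\delta_1,\xi)\in D(x_0,\zeta)$ is exactly the first of these. For the second, $(\delta_1,\xi_1)\in D(x_0,\zeta)$ gives $\delta_1\in\xi_1\zeta^{-1}\widehat\Oc$. Multiplying, $\delta\delta_1\in \xi\xi_1^{-1}\widehat\Oc\cdot\xi_1\zeta^{-1}\widehat\Oc=\xi\zeta^{-1}\widehat\Oc$. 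The map respects the $\Oc^\times$-classes, and injectivity is immediate since $\delta\mapsto\delta\delta_1$ is a bijection of $\Oc^\times\backslash F^\times$ and $\xi$ is kept fixed.

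\emph{Image.} Suppose $(\delta_2,\xi_2)=(\delta\delta_1,\xi)$. Then $\delta_2/\delta_1=\delta\in\xi_2\xi_1^{-1}\widehat\Oc$, i.e.\ $\delta_2/\xi_2\in(\delta_1/\xi_1)\widehat\Oc$. Conversely, given $(\delta_2,\xi_2)\in D(x_0,\zeta)$ with $\delta_2/\xi_2\in(\delta_1/\xi_1)\widehat\Oc$, put $\delta=\delta_2/\delta_1\in F^\times$. Then $\delta\in\xi_2\xi_1^{-1}\widehat\Oc$, and $\xi_2(x_0/\delta_1)/\delta=\xi_2x_0/\delta_2\in\widehat{L_0^\vee}$. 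Hence $(\delta,\xi_2)\in D(x_0/\delta_1,\xi_1)$ and it maps to $(\delta_2,\xi_2)$. The only care needed is to keep track of the convention that $\xi\in\Cl(F)$ means the chosen idele representative, so that all containments are genuine equalities of $\widehat\Oc$-submodules of $\A_f$.

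\emph{The final claim.} The phrase ``$(1,\zeta)\in D(x_0,\zeta)$'' implicitly takes $\zeta$ to be one of the representatives. The condition $1\in\zeta\zeta^{-1}\widehat\Oc$ is trivial, and $\zeta x_0\in\widehat{L_0^\vee}$ is the hypothesis. If $\iota(\delta_1,\xi_1)$ were surjective, then $(1,\zeta)$ would lie in its image, so $\zeta^{-1}\in(\delta_1/\xi_1)\widehat\Oc$ by the criterion above. Conversely, $(\delta_1,\xi_1)\in D(x_0,\zeta)$ gives $\delta_1/\xi_1\in\zeta^{-1}\widehat\Oc$. Together these give $\delta_1\widehat\Oc=\xi_1\zeta^{-1}\widehat\Oc$. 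Thus $\xi_1$ and $\zeta$ have the same class in $\Cl(F)$, so $\xi_1=\zeta$ as representatives. Then $\delta_1\in\widehat\Oc^\times\cap F^\times=\Oc^\times$, so $(\delta_1,\xi_1)=(1,\zeta)$ in $\Oc^\times\backslash F^\times\times\Cl(F)$.

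The main obstacle is this last step, specifically the bookkeeping between classes and chosen representatives.
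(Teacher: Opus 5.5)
Your proof is correct and follows the paper's argument step for step. Well-definedness comes from multiplying the two $\widehat{\Oc}$-containments, the image criterion reduces to $\delta_2/\delta_1\in\xi_2\xi_1^{-1}\widehat{\Oc}$, and the two opposite containments force $\delta_1\zeta\xi_1^{-1}\in\widehat{\Oc}^\times$ in the final claim. Your extra bookkeeping (that $\zeta$ must be one of the fixed class-group representatives, and that $\widehat{\Oc}^\times\cap F^\times=\Oc^\times$ then forces $(\delta_1,\xi_1)=(1,\zeta)$) only makes explicit what the paper dismisses as ``impossible''.
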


\begin{proof}
  By definition,  any $(\delta, \xi) \in  D(x_0/\delta_1, \xi_1)$ satisfies
  \begin{equation}
    \label{eq:Dcond}
  \xi (x_0/\delta_1)/\delta \in \widehat{L_0^\vee},~
   \delta \in \xi \xi_1^{-1} \widehat{\Oc} ,
  \end{equation}
which imply $\xi x_0/(\delta_1\delta) \in \widehat{L_0^\vee}$ and 
$
\delta\delta_1 \in\delta_1 \xi \xi_1^{-1} \widehat{\Oc}
\subset  \xi \zeta^{-1} \widehat{\Oc}
$
since $\delta_1 \in \xi_1 \zeta^{-1} \widehat{\Oc} $. 
That means $\iota(\delta_1, \xi_1)(\delta, \xi) \in D(x_0, \zeta)$, and the map $\iota$ is well-defined. 
It is clearly injective.
Now $(\delta_2, \xi_2) \in D(x_0, \zeta)$ is in the image of $\iota(\delta_1, \xi_1)$ if and only if $(\delta_2/\delta_1, \xi_2) \in D(x_0/\delta_1, \xi_1)$.
The first condition in \eqref{eq:Dcond} is automatically satisfied, whereas the second is equivalent to $\delta_2/\delta_1 \in \xi_2 \xi_1^{-1} \widehat\Oc$, the same criterion in the lemma. 
Lastly if $\zeta x_0 \in \widehat{L_0^\vee}$, then $(1, \zeta) \in D(x_0, \zeta)$. 
Assume $\iota(\delta_1,\xi_1)$ is surjective. Then its image contains $(1, \zeta)$, which means $\xi_1 \in  \delta_1 \zeta \widehat{\Oc}$.
On the other hand, $ \delta_1 \zeta \in \xi_1 \widehat{\Oc}$ since $(\delta_1, \xi_1) \in  D(x_0, \zeta)$.
Together, they imply $\delta_1 \zeta \xi_1^{-1} \in  \widehat{\Oc}^\times$, which is impossible since 
 $(\delta_1,\xi_1) \neq (1, \zeta)$. 
\end{proof}

\begin{lemma}\label{one divisor}
Suppose $L_0 = U_{\Oc} \oplus M$, where $U_{\Oc} = \Oc u\oplus \mathfrak{d}^{-1} u'$ and $u,u'$ are the standard basis such that $(u,u')=1$.
Then for any $x = u + ru' + v, r\in\mathfrak{d}^{-1}, v \in M^\vee/ M$, we have $D(x,1)=\{(1,1)\}$. Note that $q(x)=r+q(v)$.
\end{lemma}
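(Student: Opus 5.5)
We unwind the definition of $D(x,1)=D(x,1;\Oc)$ from \eqref{eq:CL}. A pair $(\delta,\xi)\in \Oc^\times\backslash F^\times\times\Cl(F)$ lies in $D(x,1)$ if and only if
\begin{equation*}
\xi x/\delta\in\widehat{L_0^\vee},\qquad \delta\in\xi\widehat{\Oc}.
\end{equation*}
First we check that $(1,1)$ belongs to $D(x,1)$. Since $L_0=U_\Oc\oplus M$ and $U_\Oc$ is $\Zb$-unimodular, we have $L_0^\vee=U_\Oc\oplus M^\vee$. For $x=u+ru'+v$ with $r\in\df^{-1}$ and $v\in M^\vee$, the vector $x$ lies in $L_0^\vee$. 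Moreover $1\in\widehat\Oc$, so both conditions hold.

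Conversely, let $(\delta,\xi)\in D(x,1)$. The key observation is that the $u$-coordinate of $x$ is $1$, and the $u$-component of $\widehat{L_0^\vee}$ is $\widehat\Oc$. Projecting the first condition onto the $u$-coordinate therefore gives $\xi/\delta\in\widehat\Oc$, i.e.\ $\xi\in\delta\widehat\Oc$. The second condition says $\delta\in\xi\widehat\Oc$. Together these give $\delta\xi^{-1}\in\widehat\Oc^\times$.

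Now recall that $\Cl(F)$ is represented by the fixed elements $\xi_i\in\Ab_f^\times$, which are pairwise inequivalent modulo $F^\times\widehat\Oc^\times$, with $\xi_1=1$. The relation $\xi\in\delta\widehat\Oc^\times$ says that $\xi$ and $1$ have the same class in $F^\times\backslash\Ab_f^\times/\widehat\Oc^\times$. Hence $\xi=\xi_1=1$. It then follows that $\delta\in F^\times\cap\widehat\Oc^\times=\Oc^\times$, so $\delta$ is trivial in $\Oc^\times\backslash F^\times$. Thus $D(x,1)=\{(1,1)\}$.

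Finally, the formula $q(x)=r+q(v)$ holds because $U_\Oc\perp M$, $q(u)=q(u')=0$, and $(u,u')=1$. These give $q(u+ru')=r$, and adding $q(v)$ yields the claim.

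The only delicate point is the projection step in the converse. There we must make sure that the decomposition $\widehat{L_0^\vee}=\widehat\Oc u\oplus\widehat{\df^{-1}}u'\oplus\widehat{M^\vee}$ is compatible with the coordinates of $x$. This holds precisely because $U_\Oc$ is an orthogonal, $\Zb$-unimodular summand, so its dual is itself with $u$-coefficient ideal $\Oc$.
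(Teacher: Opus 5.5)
Your proof is correct and follows the same route as the paper. You project $\xi x/\delta\in\widehat{L_0^\vee}=\widehat{\Oc}u\oplus\widehat{\mathfrak{d}^{-1}}u'\oplus\widehat{M^\vee}$ onto the $u$-coordinate to get $\xi\delta^{-1}\in\widehat{\Oc}$, combine this with $\delta\xi^{-1}\in\widehat{\Oc}$ to obtain a unit, and then conclude $\xi=1$ and $\delta\in\Oc^\times$ from the choice of class group representatives. You also verify $(1,1)\in D(x,1)$ and $q(x)=r+q(v)$, which the paper leaves implicit.
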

\begin{proof}
If~$(\delta,\xi)\in D(x,1)$, then $\delta\xi^{-1}\in\widehat{\Oc}$ and $\xi x/\delta\in\widehat{L_0^\vee}$, which implies
\begin{align*}
\frac{\xi u}{\delta} + \frac{\xi r u'}{\delta} + \frac{\xi v}{\delta} \in
\widehat{L_0^\vee} = \widehat{\Oc}u\oplus\widehat{\mathfrak{d}^{-1}}u'\oplus\widehat{M^\vee}.
\end{align*}
Hence $\xi\delta^{-1}\in\widehat{\Oc}$ and, moreover, $\xi\delta^{-1}\in\widehat{\Oc}^\times$.
By the choice of representatives in $\Cl(F)$, we obtain $\xi=1$ and $\delta\in\Oc^\times$.
This finishes the proof.
\end{proof}

\begin{theorem}\label{injectivity}
Let $L\cong L_0\oplus U_{\mathfrak{a}}$ be an even $\mathcal{O}$-lattice of signature $(n,2)$. 
Assume $q(\mu+ L_0) \supset (q(\mu) + \mathfrak{d}^{-1})\cap F^+$ for every $\mu \in L_0^\vee / L_0$, then $\KMliftL$ is injective.
\end{theorem}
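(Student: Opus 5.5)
Suppose $\KMliftL(f)=0$ for some $f\in S_{k,L}$; we show $f=0$. By Theorem~\ref{Fourier expansion}, the form $\KMliftL(f)(Z,\mP(\zeta))$ has a Fourier expansion in $X$. The forms $(h_Z^{-1})^*\omega_{\underline{\indone},\underline{\indtwo}}$ are linearly independent (Remark~\ref{rmk:2}), and the characters $\e((X,x))$ for distinct $x\in V_0$ are linearly independent as functions of $X$. Hence every coefficient $C^L_\af(x,\zeta;f)\,W_{(\underline{\indone},\underline{\indtwo})}(Y,x)$ vanishes. Taking $\underline{\indone}=\underline{\indtwo}=(1,\dots,1)$, Remark~\ref{rmk:2} shows that $W$ is a nonvanishing product of $K$-Bessel functions. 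Therefore $C^L_\af(x,\zeta;f)=0$ for all totally positive $x\in V_0$ and all $\zeta\in\A_f^\times$.

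We then invert the divisor sum in \eqref{eq:CL} by induction, using Lemma~\ref{induction number of divisors}. Choose $\zeta$ so that $\zeta^{-1}\widehat\af$ is normalized to $\widehat\Oc$; that is, replace $\zeta$ by $\zeta$ times an idele of $\af$. This reduces $D(x,\zeta;\af)$ to sets of the form $D(x,\zeta)$. Order the pairs $(x,\zeta)$ by the size of $D(x,\zeta)$, which is finite for each $x\neq 0$ because $x/\delta$ must lie in a fixed lattice. We claim that $\overline{\coef_{q(x)}(f_{\zeta x},\zeta)}=0$ whenever $\zeta x\in\widehat{L_0^\vee}$, and we prove it by strong induction on $|D(x,\zeta)|$. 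Write
\[
0=C(x,\zeta)=\text{const}\cdot\sum_{(\delta,\xi)\in D(x,\zeta)}(\dots)\,\overline{\coef_{q(x/\delta)}(f_{\xi x/\delta},\xi)} .
\]
The term $(1,\zeta)$ lies in $D(x,\zeta)$. Each other term $(\delta_1,\xi_1)$ has the form $(\delta_1,\xi_1)\cdot(1,\xi_1)$ under $\iota(\delta_1,\xi_1)$, where $(1,\xi_1)\in D(x/\delta_1,\xi_1)$. By the lemma, $D(x/\delta_1,\xi_1)$ injects non-surjectively into $D(x,\zeta)$, so it is strictly smaller. By induction, the coefficient attached to $(x/\delta_1,\xi_1)$ vanishes. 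Only the $(1,\zeta)$ term survives, and it has the nonzero weight $|\zeta|_\A^{n/2}\chi_{V_0}(\zeta)$. Hence $\coef_{q(x)}(f_{\zeta x},\zeta)=0$. The base case $|D|=1$ is this same argument with no other terms.

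Now take $\zeta=1$, so that $\xi_1=1$ is the trivial class and $\coef_\nu(f_\mu,1)=\coef_\nu(f_\mu)$ is the ordinary Fourier coefficient at $\infty$. We have shown $\coef_{q(x)}(f_\mu)=0$ for all $x\in\mu+L_0$ (viewed inside $V_0$ via $\widehat{L_0^\vee}\cap V_0=L_0^\vee$) with $q(x)\gg0$. The hypothesis $q(\mu+L_0)\supset(q(\mu)+\mathfrak d^{-1})\cap F^+$ says that every index $\nu$ appearing in \eqref{eq:Fexpdef} for $f_\mu$ is of the form $q(x)$ with $x\in\mu+L_0$. Since $f$ is cuspidal, the constant term $\nu=0$ is absent. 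Hence all Fourier coefficients of every component $f_\mu$ vanish, and $f=0$.

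The main obstacle is the bookkeeping in the induction. We must check that the sets $D$ are finite and that the recursion closes within the class of pairs $(x,\zeta)$ with $\zeta x\in\widehat{L_0^\vee}$. We must also track the normalization of $\af$ against the representatives of $\Cl(F)$: the coefficients $\coef(\cdot,\xi)$ are defined only for the fixed representatives $\xi$, so the change of variables must keep $\xi$ within that set. If the $\af$-normalization causes trouble, we would instead run the induction directly on $D(x,\zeta;\af)$; Lemma~\ref{induction number of divisors} goes through verbatim with $\widehat\Oc$ replaced by $\widehat\af$.
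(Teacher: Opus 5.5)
Your proposal is correct and follows the paper's proof. Both arguments use Remark~\ref{rmk:2} to get $C^L_\af(x,\zeta;f)=0$, then run strong induction on $|D(x,\zeta)|$ via the non-surjectivity statement in Lemma~\ref{induction number of divisors}, and finish with $\zeta=1$ plus the representability hypothesis. Your shift of $\zeta$ by an idele $a$ with $\widehat\af=a\widehat\Oc$, which identifies $D(x,\zeta a;\af)$ with $D(x,\zeta)$, is exactly what the paper leaves implicit, so the fallback you mention is unnecessary. That fallback would in any case not go through verbatim: for example, $(1,\zeta)\in D(x,\zeta;\af)$ already requires $1\in\widehat\af$.
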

\begin{proof}
Suppose $\KMliftL (f)(Z,h)\equiv 0$. Then the coefficient $C^L_\mathfrak{a}(x, \zeta; f)$ defined in \eqref{eq:CL} is 0 for all $\zeta \in \A_f$ and $x \in V_0$ by Remark \ref{rmk:2}. 
For $x\in V_0$ with $q(x) \gg 0$ and $\zeta \in\A_f^\times$ satisfying $\zeta x\in \widehat{L_0^\vee}$, we claim that $c_{q(x)}(f_{\zeta x}, \zeta) = 0$.
By taking $\zeta = 1$ and using the assumption, this implies that $f$ vanishes identically, and $\KMliftL$ is injective.

To prove the claim, we use induction on $n(x,\zeta) \coloneqq  |D(x, \zeta)|$.
If $n(x, \zeta) = 1$, then $c_{q(x)}(f_{\zeta x}, \zeta)$ and $C^L_\mathfrak{a}(x, \zeta; f)$ agree up to a non-zero multiplicative constant, and the claim holds.
In general, the vanishing of $C^L_\mathfrak{a}(x, \zeta; f)$ allows $c_{q(x)}(f_{\zeta x}, \zeta)$ to be written as a linear combination of $c_{q(x/\delta)}(f_{\xi  x/\delta}, \xi)$'s 
for $(\delta, \xi) \in D(x, \zeta)$ distinct from $(1, \zeta)$. 
By Lemma \ref{induction number of divisors}, we know that $n(x/\delta, \xi)$ is strictly smaller than $n(x, \zeta)$. By induction, $c_{q(x/\delta)}(f_{\xi  x/\delta}, \xi)$ vanishes for all such $(\delta, \xi)$, hence so does $c_{q(x)}(f_{\zeta x}, \zeta)$.   
\end{proof}


The assumption in Theorem \ref{injectivity} holds when $L_0$ splits off a hyperbolic plane. In that case, we give a simpler argument below. 

\begin{theorem}\label{injectivity two hyper}
Let $L\cong L_0\oplus U_{\mathfrak{a}}$ be an even $\mathcal{O}$-lattice of signature $(n,2)$. 
If~$L_0$ splits off a hyperbolic plane~$U_{\mathfrak{b}}$, then $\KMliftL$ is injective.
Furthermore, if $\mathfrak{a}=\mathfrak{b}=\Oc$, then the restriction of the lift to the connected component $Y_K$ of $X_K$ is injective as well.
\end{theorem}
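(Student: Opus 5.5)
The plan is to read off the vanishing of all Fourier coefficients of $f$ directly from the expansion in Theorem~\ref{Fourier expansion}, using Lemma~\ref{one divisor} to make the relevant coefficient $C^L_\af(x,\zeta;f)$ consist of a single term. Suppose $\KMliftL(f)\equiv 0$. The forms $(h_Z^{-1})^*\omega_{\underline{\indone},\underline{\indtwo}}$ are linearly independent and $W_{(\underline{1},\underline{1})}(Y,x)$ is a nonzero product of $K$-Bessel functions (Remark~\ref{rmk:2}). Uniqueness of Fourier expansions in $X$ then gives $C^L_\af(x,\zeta;f)=0$ for every $x\in V_0$ with $q(x)\gg0$ and every $\zeta$ for which $(Z,\mP(\zeta))$ lies on the component considered. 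This is the same starting point as in Theorem~\ref{injectivity}. Here, instead of an induction on $|D(x,\zeta)|$, I will choose $x$ so that the divisor set is a singleton.

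\textbf{Case $\af=\mathfrak{b}=\Oc$.} This case also gives the statement about $Y_K$. Write $L_0=U_\Oc\oplus M$ with $U_\Oc=\Oc u\oplus\df^{-1}u'$. Fix $\mu\in L^\vee/L\cong M^\vee/M$ with representative $v\in M^\vee$, and fix $\nu\in (q(v)+\df^{-1})\cap F^+$. Put $r\coloneqq \nu-q(v)\in\df^{-1}$ and $x\coloneqq u+ru'+v$, so that $q(x)=\nu\gg0$. Lemma~\ref{one divisor} gives $D(x,1)=\{(1,1)\}$. Hence
\[
C^L_\Oc(x,1;f)=\frac{(2\pi)^d}{\dis\,\zeta_F(2)}\,\overline{\coef_{\nu}(f_{\mu},1)},
\]
using that the representative of the trivial class is $\xi_1=1$ and that $\chi_{V_0}(1)=1$. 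Since $\zeta=1$ only uses the point $\mP(1)=1$, this uses only the restriction of $\KMliftL(f)$ to $\Gamma_K\backslash\domain^+=Y_K$. By \eqref{eq:FC-xi} with $\xi=1$, $\coef_\nu(f_\mu,1)$ is the ordinary Fourier coefficient at $\infty$. So every coefficient $\coef_\nu(f_\mu)$ with $\nu\gg0$ vanishes. Since $f$ is cuspidal, there is no $\nu=0$ term, and therefore $f=0$. This proves injectivity on $Y_K$, and a fortiori on $X_K$.

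\textbf{General $\af,\mathfrak{b}$.} Here we may use all $\zeta$. First choose $\zeta\in\A_f^\times$ with $\zeta^{-1}\widehat{\af}=\widehat\Oc$. Then the condition $\delta\in\xi\zeta^{-1}\widehat\af$ in $D(x,\zeta;\af)$ becomes $\delta\in\xi\widehat\Oc$, as for $\af=\Oc$. For $\mathfrak{b}$, rerun the argument of Lemma~\ref{one divisor} with $x=\beta u+\gamma u'+v$, where $\beta\in\mathfrak{b}$ and $\gamma\in\mathfrak{b}^{-1}\df^{-1}$. The $u$-coordinate condition $\xi\beta/\delta\in\widehat{\mathfrak{b}}$, combined with $\delta/\xi\in\widehat\Oc$, should force $\xi/\delta\in\widehat\Oc^\times$ provided $\beta$ generates $\widehat{\mathfrak b}$ at every prime where it matters. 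Since $\beta$ is global, this can be arranged locally only at finitely many primes, and it is the delicate point.

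To get around this, the first thing I will try is to translate by an element of the finite parabolic acting on $L_0$, that is $\mP(\beta^{-1}\cdot)$-type elements of $\mathrm{SO}(U_\mathfrak{b}\otimes F)(\A_f)\subset \mathrm{SO}(V_0)(\A_f)$. Such an element identifies $\widehat{U_\mathfrak{b}}$ with $\widehat{U_\Oc}$ locally. Following Remark~\ref{rmk:1}, the Fourier expansion at such $h\in P(\A_f)$ is computed by the mixed-model formulas \eqref{mixed weil}, and should reduce to the singleton situation of Lemma~\ref{one divisor}. The outcome should be that $C$ is a nonzero constant times $\overline{\coef_\nu(f_\mu,\xi)}$ for a fixed class $\xi$ and all admissible $\nu$. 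Vanishing of all Fourier coefficients of $f^\#$ along the single cusp $m(\xi)$ then gives $f^\#=0$, because $f^\#$ is determined by its restriction to $n(b)m(\xi)m(a)$ and Lemma~\ref{lemma:basicpropfsharp}. If this fails, the fallback is to verify the hypothesis of Theorem~\ref{injectivity} directly, namely $q(\mu+L_0)\supset(q(\mu)+\df^{-1})\cap F^+$. To do that I would vary both $v$ in its coset and $\beta\gamma$, choosing $\gamma$ with $(\gamma)\mathfrak{b}\df$ coprime to a given ideal. I expect this reduction for nonprincipal $\mathfrak b$ to be the main obstacle; the principal case is immediate from Lemma~\ref{one divisor}.
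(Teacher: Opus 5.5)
Your proposal is correct and follows the paper's proof. The case $\af=\mathfrak b=\Oc$ (Lemma~\ref{one divisor} at $\zeta=1$, which only sees $Y_K$) is exactly the paper's argument. For general $\af,\mathfrak b$ the paper uses precisely your first option: translate by $\mP(\eta_\af,h_{\mathfrak b})$ to reduce to $\af=\mathfrak b=\Oc$.

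The step you flag as delicate is in fact immediate. Since $\omega(h)\varphi_\mu$ is the characteristic function of $h(\mu+\widehat L)$, translating by $h=\mP(\eta_\af,h_{\mathfrak b})$, with $\eta_\af\widehat\Oc=\widehat\af$ and $h_{\mathfrak b}\widehat{U_{\mathfrak b}}=\widehat{U_\Oc}$, just replaces $\widehat L$ by $\widehat{U_\Oc}\oplus\widehat{U_\Oc}\oplus\widehat M$. So the relevant divisor set becomes $D(x,1)$ of Lemma~\ref{one divisor} for $U_\Oc\oplus M$, and no fallback is needed. Your fallback would not work in general anyway: for $L_0=U_{\mathfrak b}$ with $[\mathfrak b]\notin\{1,[\df]^{-1}\}$, $q(L_0)$ misses every totally positive $t$ with $(t)\df$ prime.
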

\begin{proof}
Applying the action of $\mP(\eta_{\mathfrak{a}},h_{\mathfrak{b}})$ for $\eta_\mathfrak{a}\in \A_f^\times$ and $h_\mathfrak{b}\in \SO(V_0(\A_f))$, it suffices to consider the case $\mathfrak{a}=\mathfrak{b}=\Oc$. 
In this case, $q(\mu+ L_0) = q(\mu) + \mathfrak{d}^{-1}$ for any $\mu\in L_0^\vee/L_0$ and the injectivity follows from Theorem \ref{injectivity}.

To prove the restriction of $\KMliftL$ to the connected component $Y_K$ is injective when $\mathfrak{a}=\mathfrak{b}=\Oc$, we proceed as follows.
For any $\mu \in L_0^\vee/L_0$ and $r \in \mathfrak{d}^{-1}$, there exists $x\in \mu + L_0$ such that $q(x)=r + q(\mu)$ with $D(x,1)=\{(1,1)\}$ by Lemma \ref{one divisor}. Therefore, $c_{r+q(\mu)}(f_{\mu})$ is zero since it agrees with $C^L(x, 1; f)$ up to a non-zero constant. Therefore, $f=0$ and $\KMliftL$ is injective.
\end{proof}

\begin{remark}
After passing to the mixed model, the polynomial $P_{(1, 1)}(0, \gamma_2)\widetilde{\Gpol}_{(1,1)}(y_0)=-\gamma_2^2$
becomes negative when $\gamma_2\in\R^\times$ by Lemma \ref{lemma; FtransfKMschwartz}. For $F=\QQ$, this simplifies the proof in \cite{zuffetti_unfolding1}, where for each $\mu \in L_0 \otimes \mathbb{R}$ such that $q(\mu)>0$, one needs to find two different indexes $\alpha, \alpha' \in\{1, \ldots, n\}$, 
and $h \in H(\RR)$, such that~$\mathcal{P}_{(\alpha, \alpha'), h^{\#}, 1,0}(h^{\#}\mu)>0$, see Section~\ref{sec:comparison} for details on the polynomial~$\mathcal{P}_{(\alpha, \alpha'), h^{\#}, 1,0}$.
\end{remark}

\subsection{Comparison of the Fourier expansion over $\Q$ using Borcherds' approach}\label{sec:comparison}
We explain the relation between the Fourier expansion of the Kudla–Millson lift over~$F=\QQ$, obtained by passing to a mixed model of the Weil representation as described in Section~\ref{sec;adelicKMlifts}, and the expansion computed in~\cite{zuffetti_unfolding1} using Borcherds’ method~\cite{borcherds_singularities}.
Borcherds’ unfolding is based on decomposing certain Siegel theta functions -- attached to~$L$ and to (not necessarily harmonic) homogeneous polynomials -- with respect to the splitting~$L=U\oplus L_0$.

This section can be viewed as a dictionary that translates such decomposed theta functions into the language of the mixed model of the Weil representation and the tube domain model~$\tubedom$.
\\


Let~$F=\QQ$ and let
\[
\Delta\coloneqq\sum_{j=1}^{n+2}\frac{\partial^2}{\partial y_j^2}
\]
be the standard Laplacian on~$\RR^{n+2}$, where~$y_j$ is the coordinate of $L\otimes\RR$ with respect to the pseudo-orthonormal basis vector~$e_j\coloneqq e_j^1$ as in Section~\ref{sec:prelim}.

For any negative definite plane~$z\in\mathcal{D}=\mathcal{D}_1$, let $\genU_z$ and $\genU_{z^\perp}$ be the orthogonal projection of $\genU$ in $z$ and $z^\perp$ respectively.
Following Borcherds~\cite[Section~5]{borcherds_singularities}, we denote by~$w$ (resp.~$w^\perp$) the orthogonal complement of~$\genU_z$ (resp.~$\genU_{z^\perp}$) in~$z$ (resp.~$z^\perp$).
Furthermore, for every~$h\in H(\RR)$ we define
\begin{align}\label{h inverse sharp}
(h^{-1})^\#\colon L\otimes\RR\to L\otimes\RR, \qquad v\mapsto h^{-1}(v_{w^\perp}+v_w),
\end{align}
where~$z=hz_0$.

Let~$\mathcal{P}$ be a homogeneous polynomial of degree~$(m^+,m^-)$ on~$V(\RR)$, namely, homogeneous of degree~$m^+$ in the coordinates~$y_1,\dots,y_n$, and homogeneous of degree~$m^-$ in~$y_{n+1}$, $y_{n+2}$.
In~\cite[p.~508]{borcherds_singularities} Borcherds decomposes~$\mathcal{P}$ in terms of homogeneous polynomials $\mathcal{P}_{(h^{-1})^\#,r^+,r^-}$ on $(h^{-1})^\#(V(\RR))$ 
of degree respectively $(m^+-r^+,m^--r^-)$, as
\begin{equation}\label{eq;decomppolasinbor}
\mathcal{P}\big(h^{-1}v\big)
=
\sum_{r^+,r^-}(v,\genU_{z^\perp})^{r^+} (v,\genU_z)^{r^-} \mathcal{P}_{(h^{-1})^\#,r^+,r^-}\big((h^{-1})^\#v\big).
\end{equation}
For the Fourier expansion of the Kudla--Millson lift over~$F=\QQ$, it is enough to consider the degree $(2,0)$ homogeneous polynomials
\[
\mathcal{P}_{(\indone,\indtwo)}(v)=2y_\indone y_\indtwo\qquad \text{with }\indone,\indtwo=1,\dots,n.
\]

We are now ready to recall the Fourier expansion~\cite[(5.18)]{zuffetti_unfolding1} of~$\Lambda_{\mathrm{KM}}$ over the rationals.
For any $x\in V$, we denote $x^2 = (x,x) = 2 q(x)$.

\begin{theorem}[\cite{zuffetti_unfolding1}]\label{thm:fromzufunf}
    Let~$F=\QQ$, and let~$x_0\in \mu + L_0$, with~$\mu\in L_0^\vee/L_0$ and~$\qform(x_0)>0$.
    The Fourier coefficient of index~$x_0$ of the Kudla--Millson lift of a weight $k=1+n/2$ elliptic cusp form~$f$ is
    \begin{equation}\label{eq;coefffromunfinj1}
    \begin{split}
        &2\lvert q(Y)\rvert^{1/2}
    \sum_{\substack{\indco\in\ZZ_{\geq 1} \\ \indco|x_0}}
    \coef_{q(x_0)/\indco^2}(f_\mu)
    \sum_{\alpha,\alpha^{\prime}=1}^n\int_0^{+\infty} 
    \exp\Big(-\frac{2\pi y x_0^2}{\indco^2}+\frac{2\pi y(x_0,Y)^2}{\indco^2Y^2} + \frac{\pi \indco^2 Y^2}{2y}\Big)
     \\
    &\times y^{(n-1)/2}\sum_{r=0}^2\Big(\frac{\indco}{2iy}\Big)^{r}
   \exp\Big(-\frac{\Delta}{8\pi y}\Big)\Big(\mathcal{P}_{(\alpha,\alpha^{\prime}),(h^{-1})^\#,r,0}\Big)((h^{-1})^\#x_0/\indco)\,dy
   \cdot(h^{-1})^*\omega_{\alpha,\alpha^{\prime}}    \end{split}
\end{equation} 
Here~$h$ denotes any isometry in~$H(\RR)$ mapping the base point~$Z_0$ to~$Z\in\tubedom$.
\end{theorem}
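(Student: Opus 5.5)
The plan is to derive Theorem~\ref{thm:fromzufunf} from the specialization of Theorem~\ref{Fourier expansion} to $F=\QQ$, $\af=\Oc=\ZZ$, $\zeta=1$, rather than repeat Borcherds' unfolding. Over $\QQ$ the class group is trivial, so the set $D(x,1)$ consists of pairs $(\delta,1)$ with $\delta\in\{\pm1\}\backslash\QQ^\times$, $\delta\in\widehat{\ZZ}$ and $x/\delta\in\widehat{L_0^\vee}$. Taking $\delta\geq1$, this means $\delta$ is a positive integer dividing $x$ in the appropriate sense, which recovers the sum over $\delta\mid x_0$. The constant $\frac{2\pi}{\zeta(2)}$ and the factor $\delta^{n-1}$ must then be matched with the normalization of the Petersson product and of $f$ used in~\cite{zuffetti_unfolding1}. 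The authors of~\cite{zuffetti_unfolding1} use the classical measure $dx\,dy/y^2$ on $\SL_2(\ZZ)\backslash\HH$. Their formula also carries the factor $2$ coming from $\pm\delta$. I would check the constants by comparing volumes through $\vol(G(\QQ)\backslash G(\A))=1$ and $\vol(\SL_2(\ZZ)\backslash\HH)=\pi/3$.

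The heart of the proof is to identify, for each $(\alpha,\alpha')$, our archimedean integral $W_{(\alpha,\alpha')}(Y,x)$ with the $y$-integral in~\eqref{eq;coefffromunfinj1}. First I would substitute $t^2=y/\delta^2$, up to the change of variable $x=\delta x_0'$ already made in the proof of Theorem~\ref{Fourier expansion}. With this substitution the Gaussian factor $e^{-\pi(4q(x_{Y^\perp})t^2+|q(Y)|/t^2)}$ becomes the exponential in~\eqref{eq;coefffromunfinj1}, using $4q(x_{Y^\perp})=x^2-(x,Y)^2/Y^2$ and $|q(Y)|=-Y^2/2$. The powers $t^{n-1}\,dt/t$ become $y^{(n-1)/2}\,dy$ after absorbing the factors of $y$ produced by the polynomial terms.

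Next I would translate Borcherds' decomposition~\eqref{eq;decomppolasinbor} into the tube domain picture with $h=h_Z$. Here $\genU_z$ and $\genU_{z^\perp}$ are computed explicitly from the basis $v_1,v_2$ of $z$ in Section~\ref{sec:tubedomains}. The aim is to show that $(h_Z^{-1})^\#$ restricted to $V_0$ agrees with $h_Y^{-1}$, up to the translation by $X$. For $\mathcal{P}_{(\alpha,\alpha')}=2y_\alpha y_{\alpha'}$, the components $\mathcal{P}_{h^\#,r,0}$ should then match as follows:
\begin{itemize}
\item for $r=0$, with $\widetilde{Q}$ when $\alpha,\alpha'>1$;
\item for $r=1$, with the mixed terms $\sqrt2 y_{\alpha'}\cdot\eta$;
\item for $r=2$, with the term $\eta^2$ when $\alpha=\alpha'=1$.
\end{itemize}
The factor $(\delta/2iy)^r$ corresponds to $P(-i|q(Y)|^{1/2}/t)$ after the substitution. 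The operator $\exp(-\Delta/8\pi y)$ applied to $2y_\alpha^2$ subtracts $1/(4\pi y)$, which produces the $-\frac1{2\pi}$ correction in $Q_{(\alpha,\alpha)}$ after rescaling by $t$. For $\alpha\ne\alpha'$ the operator acts trivially.

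I expect the main obstacle to be this identification of the vectors $\genU_z$, $w$, $w^\perp$ with the mixed-model coordinates. In particular one must check that the coordinate $y_1$, in the direction $e_1=(\genU+\genUU)/\sqrt2$, pulls back under $h_Z^{-1}$ to a multiple of $(v,\genU_{z^\perp})$. This bookkeeping, including the factors of $\sqrt2$ and $|q(Y)|^{1/2}$, is where errors are likely. My first attempt would be to verify it at the base point $Z_0$, where $h_Z=1$ and everything is diagonal. I would then transport the result by $N_P$ and $M_P$-equivariance, since both sides of the desired identity transform the same way under~\eqref{mixed weil}.
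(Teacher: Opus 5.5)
Your route is the one the paper takes in Proposition~\ref{comparison adelic classical}. The theorem itself is only quoted from \cite{zuffetti_unfolding1}, where it is proved by Borcherds' unfolding; the paper recovers it as the case $F=\QQ$, $\mathfrak a=\ZZ$, $\zeta=1$ of Theorem~\ref{Fourier expansion}. It does so by using $(h_Z^{-1})^\#x_0=h_Y^{-1}x_0$, substituting $t=\sqrt y/\indco$ (the paper's $a=\sqrt y$), and matching polynomials as in \eqref{eq;borchpolinQ}.

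Your shortcut of computing Borcherds' decomposition at $Z_0$ and transporting it is fine, but the transport on Borcherds' side does not come from \eqref{mixed weil}. It comes from $h_Z^{-1}\genU=|q(Y)|^{-1/2}\genU$. This gives $\genU_z=|q(Y)|^{-1/2}h_Z\genU_{z_0}$ and $\genU_{z^\perp}=|q(Y)|^{-1/2}h_Z\genU_{z_0^\perp}$. By uniqueness of the decomposition, the $h_Z$-decomposed polynomials are therefore $|q(Y)|^{r/2}$ times those at the base point; for instance one gets $4|q(Y)|=-2Y^2$ for $r=2$, $\alpha=\alpha'=1$.

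Your constant check does close. One has $t^{n-1}\,dt/t=\indco^{1-n}y^{(n-1)/2}\,dy/(2y)$, and the factor $\indco^{1-n}$ cancels $|\Nm(\indco)|^{n-1}$. The $1/y$ is absorbed because $\sum_r(\indco/2iy)^r\exp(-\Delta/8\pi y)\mathcal{P}_{(\alpha,\alpha'),(h_Z^{-1})^\#,r,0}=y^{-1}\widetilde{\Gpol}_{(\alpha,\alpha')}P_{(\alpha,\alpha')}$. What remains is $\frac{2\pi}{\zeta(2)}\cdot\frac12=\frac3\pi\cdot 2$. That is $\vol(\SL_2(\ZZ)\backslash\HH)^{-1}$ times the $2$ in \eqref{eq;coefffromunfinj1}, provided \cite{zuffetti_unfolding1} uses $dx\,dy/y^2$.

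Three points need correcting.

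\emph{The Laplacian slip.} Since $\Delta(2y_\alpha^2)=4$, you get $\exp(-\Delta/8\pi y)(2y_\alpha^2)=2y_\alpha^2-\frac{1}{2\pi y}$, not $2y_\alpha^2-\frac{1}{4\pi y}$. With your value the diagonal terms would give $-\frac1{4\pi}$ instead of the $-\frac1{2\pi}$ in $\Gpol_{(\alpha,\alpha)}$. Also, the correction only appears for $\alpha=\alpha'>1$, because $y_1$ vanishes on $(h_Z^{-1})^\#V(\RR)=V_0(\RR)$.

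\emph{The choice of $h$.} The statement allows any $h$ with $hZ_0=Z$, while the specialization only gives $h=h_Z$. You must add the following argument.
\begin{itemize}
\item Write $h=h_Zk$ with $k\in K$. The vectors $\genU_z,\genU_{z^\perp}$ and the spaces $w,w^\perp$ depend only on $z$, and $(h^{-1})^\#=k^{-1}(h_Z^{-1})^\#$.
\item Hence the decomposition of $\mathcal P$ with respect to $h$ is that of $\mathcal P\circ k^{-1}$ with respect to $h_Z$.
\item The $K$-invariance of $\sum_{\alpha,\alpha'}\mathcal{P}_{(\alpha,\alpha')}\otimes\omega_{\alpha,\alpha'}$, together with that of $\Delta$, then shows that the full form $\sum_{\alpha,\alpha'}(\cdots)\,(h^{-1})^*\omega_{\alpha,\alpha'}$ is independent of $h$.
\end{itemize}

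\emph{The component index.} The specialization produces $\overline{\coef_{q(x_0/\indco)}(f_{x_0/\indco})}$. The component attached to $\indco$ is the class of $x_0/\indco$, not $\mu$, so the $f_\mu$ in the display has to be read that way. The bar only reflects that $\KMliftL$ is antilinear here.
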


The coefficient~\eqref{eq;coefffromunfinj1} is a complex-valued function on~$\tubedom$ that depends on the imaginary part~$Y$ of~$Z\in\tubedom$.
In fact, one can show that~$\exp\big(-\frac{\Delta}{8\pi y}\big)\big(\mathcal{P}_{(\alpha,\alpha^{\prime}),(h^{-1})^\#,r,0}\big)((h^{-1})^\#x_0/\indco)$ depends only on~$Y$ even if the choice of~$h$ depends on~$Z$; see~\cite[Lemma~5.6]{zuffetti_unfolding1}.

Let~$P(\RR)$ be the parabolic subgroup of~$H(\RR)$ stabilizing the isotropic line~$\ell=\RR\genU$.
In what follows, we show that if we choose~$h$ to be the isometry~$h_Z\in P(\RR)$ as in~\eqref{eq;explchofh_Z}, then~\eqref{eq;coefffromunfinj1} boils down to a special case of Theorem~\ref{Fourier expansion}.
This serves a twofold purpose: It provides a consistency check for Theorem~\ref{Fourier expansion} and explains how to pass from the aforementioned Borcherds' formalism to the (more standard) mixed model of the Weil representation.

The following result may be considered as a dictionary to translate Borcherds' formalism in terms of explicit functions on~$\tubedom$.
Its proof is postponed in Appendix~\ref{sec;appborc}.

\begin{lemma}\label{lemma:lemtbcfromunf1}
    Let~$Z=X+iY\in\tubedom$ and let
    \[
    h_Z=\nP(X)\mP(\lvert q(Y)\rvert^{1/2},h_Y)\in P(\RR)
    \]
    be an isometry mapping the base point $Z_0$ to~$Z$.
    In particular,~$h_Y\in\bigO(V_0\otimes \RR)$ is any isometry mapping~$\sqrt{2\lvert q(Y)\rvert}e_{n+1}$ to~$Y$.
    \begin{enumerate}[label=(\roman*)]
    \item $\varphi_0(h_Y^{-1}x_0) = \exp(-\pi x_0^2 + 2\pi(x_0,Y)^2/Y^2)$ for all~$x_0\in V_0(\RR)$. 
    \label{item;lemtbcfromunf10}
        \item $(h^{-1}_Z)^\#v=h^{-1}_Y(v_0-(v,\genU)X)$ for all~$v\in V(\RR)$, where~$v_0$ denotes the orthogonal projection of~$v$ to~$V_0(\RR)$.
        In particular, $(h^{-1}_Z)^\#v_0=h^{-1}_Yv_0$.
        \label{item;lemtbcfromunf11}
        \item If~$x_0\in V_0(\RR)$, then
        \label{item;lemtbcfromunf12}
        \begin{align*}
            &\mathcal{P}_{(\alpha,\alpha^{\prime}),(h^{-1}_Z)^\#,r,0}((h^{-1}_Z)^\#x_0)
            \\
            &
            \quad
            =
            \begin{cases}
                -4Y^2(\genU,e_\alpha)(\genU,e_{\alpha^{\prime}}) & \text{if $r=2$},
                \\
                2\sqrt{2}\lvert Y\rvert\Big((\genU,e_\alpha)(h^{-1}_Yx_0,e_{\alpha^{\prime}}) + (\genU,e_{\alpha^{\prime}})(h^{-1}_Yx_0,e_\alpha)\Big) & \text{if $r=1$},
                \\
                2(h^{-1}_Yx_0,e_\alpha)(h^{-1}_Yx_0,e_{\alpha^{\prime}}) & \text{if $r=0$.}
            \end{cases}
        \end{align*} 
        \item \label{item;lemtbcfromunf13}
        If~$x_0\in V_0(\RR)$, then
        \begin{align*}
            &\exp\Big(-\frac{\Delta}{8\pi y}\Big)\Big(\mathcal{P}_{(\alpha,\alpha^{\prime}),(h^{-1}_Z)^\#,r,0}\Big)((h^{-1}_Z)^\#x_0)
            \\
            &
            =\begin{cases}
                \mathcal{P}_{(\alpha,\alpha),(h^{-1}_Z)^\#,0,0}(h^{-1}_Yx_0) - \frac{1}{2\pi y}& \text{if $r=0$ and $\alpha=\alpha^{\prime}>1$,}
                \\
                \mathcal{P}_{(\alpha,\alpha^{\prime}),(h^{-1}_Z)^\#,r,0}(h^{-1}_Yx_0) & \text{otherwise.}
            \end{cases}
        \end{align*}
    \end{enumerate}
\end{lemma}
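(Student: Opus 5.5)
Throughout, $F=\QQ$, $h_Z=\nP(X)\mP(\lvert q(Y)\rvert^{1/2},h_Y)$, and we compute directly in the basis $e_1,\dots,e_{n+2}$ of Section~\ref{sec:prelim}.

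\emph{Part (i).} By definition of $h_Y$, the isometry $h_Y^{-1}$ sends $Y$ to $\sqrt{2|q(Y)|}\,e_{n+1}$. Hence for $x_0\in V_0(\RR)$ the coordinate $y_{n+1}$ of $h_Y^{-1}x_0$ is $-(x_0,Y)/\sqrt{-Y^2}$. The remaining coordinates $y_2,\dots,y_n$ of $h_Y^{-1}x_0$ are those of $x_0$ orthogonal to $Y$. The majorant therefore equals $x_0^2-2y_{n+1}^2\cdot(-1)$, that is,
\[
\textstyle\sum_j y_j^2 = x_0^2 - 2(x_0,Y)^2/Y^2 .
\]
This gives the formula in (i).

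\emph{Part (ii).} Let $z=h_Zz_0$. Since $h_Z$ is an isometry, $(h_Z^{-1})^\#v=h_Z^{-1}(v_{w^\perp}+v_w)$ equals $h_Z^{-1}v$ minus the components along $h_Z^{-1}\genU_z$ and $h_Z^{-1}\genU_{z^\perp}$. These are the projections of $h_Z^{-1}\genU$ to $z_0$ and $z_0^\perp$. Because $h_Z\in P(\RR)$ stabilises $\ell$, we get $h_Z^{-1}\genU=|q(Y)|^{-1/2}\genU$, and $\genU$ lies in $\RR e_1\oplus\RR e_{n+2}$ with $e_1\in z_0^\perp$ and $e_{n+2}\in z_0$. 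So removing those two components kills exactly the $e_1,e_{n+2}$ coordinates, equivalently the $e,e'$ coordinates. Thus $(h_Z^{-1})^\#v$ is the $V_0$-component of $h_Z^{-1}v$. Using the explicit matrices for $\nP(-X)$ and $\mP$, this $V_0$-component is $h_Y^{-1}(v_0-(v,\genU)X)$, since the $x_1$-coordinate of $v$ is $(v,\genU)$.

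\emph{Part (iii).} Write $h_Z^{-1}v=(h_Z^{-1})^\#v+c^+e_1+c^-e_{n+2}$. The coefficients $c^\pm$ are proportional to $(v,\genU_{z^\perp})$ and $(v,\genU_z)$ respectively; the constants come from $|q(Y)|^{1/2}$ and $\sqrt2$, since $\genU=(e_1+e_{n+2})/\sqrt2$. Expanding $2y_\alpha y_{\alpha'}$ of this vector gives three pieces:
\begin{itemize}
\item the terms quadratic in $(v,\genU_{z^\perp})$, with coefficient $(\genU,e_\alpha)(\genU,e_{\alpha'})$ up to scaling;
\item the cross terms;
\item the terms $2(h_Y^{-1}x_0,e_\alpha)(h_Y^{-1}x_0,e_{\alpha'})$.
\end{itemize}
Only $\alpha=1$ contributes to $(\genU,e_\alpha)$. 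No $e_{n+2}$-coordinate appears because $\alpha,\alpha'\le n$, so $r^-=0$. Reading off the coefficients according to \eqref{eq;decomppolasinbor}, with $(\genU_{z^\perp},\genU_{z^\perp})$ computed from $Y^2$, gives the stated constants $-4Y^2$ and $2\sqrt2|Y|$. The main obstacle is exactly this bookkeeping of normalisations, in particular the sign coming from $Y^2<0$.

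\emph{Part (iv).} Each $\mathcal{P}_{\cdots,r,0}$ is a polynomial of degree $2-r$ on $(h_Z^{-1})^\#V(\RR)=V_0$, with coordinates $y_2,\dots,y_{n+1}$. So $\exp(-\Delta/8\pi y)$ acts as $1-\Delta/8\pi y$, and only the case $r=0$ is affected. There, $\Delta(2y_\alpha y_{\alpha'})=4\delta_{\alpha\alpha'}$ for $\alpha,\alpha'\ge 2$, which yields the correction $-\frac{1}{2\pi y}$ when $\alpha=\alpha'>1$. For $\alpha=1$ the $r=0$ polynomial vanishes, since $e_1\notin V_0$.
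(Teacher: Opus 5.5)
Your proposal is correct. Parts (i) and (iv) are the paper's argument; the paper phrases (i) via $(x_0|_{\RR e_{n+1}})^2$ and $h_Y(\RR e_{n+1})=\RR Y$ rather than coordinates. Parts (ii) and (iii) take a genuinely different route.

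The paper first computes $v_{\RR\genU_z}$ and $v_{\RR\genU_{z^\perp}}$ explicitly in terms of $X,Y$ via \cite[Lemma~4.1]{zuffetti_unfolding1}. This gives $v_w+v_{w^\perp}=v-(v,X+\genUU-X^2\genU)\genU-(v,\genU)X-(v,\genU)\genUU$, to which it applies $h_Z^{-1}$ in coordinates. For (iii) it quotes the general decomposition of \cite[Lemma~3.6]{zuffetti_unfolding1} and then specializes. You use only $h_Zz_0=z$ and $h_Z^{-1}\genU=\lvert q(Y)\rvert^{-1/2}\genU$ to see that the two removed lines go to $\RR e_1$ and $\RR e_{n+2}$. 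Hence $(h_Z^{-1})^\#v=\mathrm{pr}_{V_0}(h_Z^{-1}v)$, and (iii) becomes a direct expansion of $2y_\alpha y_{\alpha'}$ at $h_Z^{-1}v$. Your route is shorter and self-contained, and it explains why Borcherds' polynomials become explicit on $\tubedom$. The paper's formula for $(h^{-1})^\#v$, by contrast, holds for every $h$, not only $h\in P(\RR)$.

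The bookkeeping you leave open closes at once:
\begin{itemize}
\item $\genU_{z^\perp}^2=-1/Y^2$ and $h_Z^{-1}\genU_{z^\perp}=(h_Z^{-1}\genU)_{z_0^\perp}=\lvert q(Y)\rvert^{-1/2}e_1/\sqrt2$ give $c^+=\lvert Y\rvert(v,\genU_{z^\perp})$, with $\lvert Y\rvert=\sqrt{-Y^2}$.
\item Hence $2(c^+)^2=-2Y^2(v,\genU_{z^\perp})^2$ and $2c^+y_{\alpha'}=2\lvert Y\rvert(v,\genU_{z^\perp})y_{\alpha'}$, where $y_{\alpha'}=((h_Z^{-1})^\#v,e_{\alpha'})$.
\item These match $-4Y^2(\genU,e_1)^2$ and $2\sqrt2\lvert Y\rvert(\genU,e_1)$, since $(\genU,e_1)=1/\sqrt2$.
\item Reading off coefficients is justified because $v\mapsto\big((v,\genU_{z^\perp}),(v,\genU_z),(h_Z^{-1})^\#v\big)$ is a linear isomorphism $V(\RR)\to\RR^2\times V_0(\RR)$.
\end{itemize}

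One point to watch in the last step of (ii), to which your route is sensitive and the paper's is not. Take the matrix of $\nP(b)$ as displayed in the definition of $\NP$, and test it on $v_1=(q(Y)-q(X),X,1)^t$ and $v_2=(-(X,Y),Y,0)^t$. It maps the plane of $Z$ to that of $Z-b$, not $Z+b$ as the text claims. Since you use $h_Zz_0=z$, the unipotent factor of $h_Z$ must be the one translating by $+X$. Its inverse acts on $V_0$-components by $x_0\mapsto x_0-x_1X$. Inserting the displayed matrix of $\nP(-X)$ literally would instead give $h_Y^{-1}(v_0+(v,\genU)X)$.

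In the paper's computation the $V_0$-component is unaffected by this. There, $v_w+v_{w^\perp}$ is orthogonal to $\genU_z+\genU_{z^\perp}=\genU$, so it has zero $\genUU$-coordinate. Its $V_0$-part is therefore not moved by any element of $\NP(\RR)$.
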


We now compare the Fourier coefficients à la Borcherds computed in Theorem~\ref{thm:fromzufunf} with the ones in Theorem~\ref{Fourier expansion}.
\begin{proposition}\label{comparison adelic classical}
    Let~$F=\QQ$.
    The Fourier expansion in Theorem~\ref{Fourier expansion} equals the Fourier expansion~\eqref{eq;coefffromunfinj1} computed in~\cite{zuffetti_unfolding1} using Borcherds' unfolding method.
\end{proposition}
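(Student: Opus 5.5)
We specialize Theorem~\ref{Fourier expansion} to $F=\QQ$, $\af=\ZZ$, $\zeta=1$, and compare it with \eqref{eq;coefffromunfinj1} one coefficient at a time. The comparison is term by term in three parts: the arithmetic factor, the Gaussian and $e^{-\pi|q(Y)|/t^2}$ factor, and the polynomial factor.

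\textbf{Arithmetic factor.} For $F=\QQ$ we have $\Cl(F)$ trivial, $\xi=1$ and $\chi_{V_0}(1)=1$. The set $D(x,1;\ZZ)$ reduces to positive integers $\delta$ with $x/\delta\in L_0^\vee$; here we use that $\Oc^\times=\{\pm1\}$ and take $\delta>0$. Also $\coef_\nu(f_\mu,1)=\coef_\nu(f_\mu)$. So $C^L_{\ZZ}(x,1;f)$ becomes
\[
\frac{2\pi}{\zeta(2)}\sum_{\delta\mid x}\delta^{n-1}\,\overline{\coef_{q(x)/\delta^2}(f_{x/\delta})}.
\]
The complex conjugate here is consistent with the antilinear convention \eqref{hermitian pairing}; the statement in \cite{zuffetti_unfolding1} has $f$ paired linearly, and we account for this either way.

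\textbf{Integral.} In \eqref{eq:Wx0} we substitute $t^2=2y/\delta^2$, up to the normalization that makes the exponentials match. The Gaussian $e^{-4\pi q(x_{Y^\perp})t^2}$ becomes $\exp(-2\pi y x_0^2/\delta^2+2\pi y(x_0,Y)^2/(\delta^2Y^2))$ by Lemma~\ref{lemma:lemtbcfromunf1}\ref{item;lemtbcfromunf10}. The factor $e^{-\pi|q(Y)|/t^2}$ becomes $e^{\pi\delta^2Y^2/(2y)}$, with $Y^2<0$. The substitution also turns $t^{n-1}\,dt/t$ into $y^{(n-1)/2}\,dy$ times powers of $\delta$ and $2$. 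These powers should combine with $\delta^{n-1}$ and with $x/\delta$ appearing in the polynomial arguments, which is where homogeneity of degree~$2$ of the $\mathcal{P}$'s is used.

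\textbf{Polynomial factor.} With $h=h_Z$ we must identify, for each pair $(\alpha,\alpha')$,
\[
P_{(\alpha,\alpha')}(-i|q(Y)|^{1/2}t^{-1})\,\widetilde{\Gpol}_{(\alpha,\alpha')}(h_Y^{-1}tx)
\]
with
\[
\sum_{r=0}^2 \Big(\frac{\delta}{2iy}\Big)^r \exp\Big(-\frac{\Delta}{8\pi y}\Big)\mathcal{P}_{(\alpha,\alpha'),(h_Z^{-1})^\#,r,0}\big((h_Z^{-1})^\#x_0/\delta\big),
\]
up to the common factor already extracted. Lemma~\ref{lemma:lemtbcfromunf1}\ref{item;lemtbcfromunf12}--\ref{item;lemtbcfromunf13} evaluate the right side explicitly. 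In the basis of Section~\ref{sec:prelim}, $(\genU,e_1)=1/\sqrt2$ and $(\genU,e_\alpha)=0$ for $1<\alpha\le n$. Hence the $r=2$ term survives only for $\alpha=\alpha'=1$, where it equals $-2Y^2=4|q(Y)|$ and matches $P_{(1,1)}=-|q(Y)|/t^2$ times $\widetilde{\Gpol}_{(1,1)}=1$. The $r=1$ term survives only when exactly one index is $1$, and gives $\sqrt2\cdot(\text{coordinate})$ times $|Y|/(iy)$, matching $P=\eta$ times $\widetilde\Gpol=\sqrt2y_{\alpha'}$. The $r=0$ term survives when both indices are $>1$ and gives $Q_{(\alpha,\alpha')}$, including the $-1/(2\pi)$ correction coming from the heat operator in Lemma~\ref{lemma:lemtbcfromunf1}\ref{item;lemtbcfromunf13}. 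Cross terms vanish because $(\genU,e_\alpha)=0$ for $\alpha>1$, and the case $r=0$ with an index equal to $1$ is also killed because $h_Y^{-1}x_0\in V_0$ has no $e_1$-component. Finally $\e((X,x))$ accounts for passing from $(h^{-1})^*$ to $(h_Z^{-1})^*$ via the $N_P$-translation, and the forms $\omega_{\alpha,\alpha'}$ agree.

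\textbf{Expected obstacle.} The hardest part is bookkeeping the normalizing constants rather than the structure. These are the powers of $2$ and $\delta$ from the substitution; the overall factor $2|q(Y)|^{1/2}$ versus $(2\pi)|q(Y)|^{1/2}/\zeta(2)$, which depends on the measure normalization $\vol(G(\QQ)\backslash G(\A))=1$ versus the classical Petersson normalization, so the constants may agree only up to that normalization; and the sign of $i$ in $P(\eta_1-i\eta_2)$ relative to $(\delta/(2iy))^r$. I would first fix the scaling by checking the $\alpha=\alpha'=1$ term, where everything is explicit by Remark~\ref{rmk:2}, and then verify the remaining cases using the same substitution.
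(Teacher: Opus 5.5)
Your proposal is correct and follows the paper's route. You match the Gaussians via Lemma~\ref{lemma:lemtbcfromunf1}\ref{item;lemtbcfromunf10}, and you match the polynomials case by case via parts \ref{item;lemtbcfromunf12}--\ref{item;lemtbcfromunf13}; in every case $\sum_r(\frac{\delta}{2iy})^r\exp(-\frac{\Delta}{8\pi y})\mathcal{P}_{(\alpha,\alpha'),(h_Z^{-1})^\#,r,0}((h_Z^{-1})^\#x/\delta)=y^{-1}\widetilde{\Gpol}_{(\alpha,\alpha')}(h_Y^{-1}\sqrt{y}\,x/\delta)\,P_{(\alpha,\alpha')}(-i|q(Y)|^{1/2}\delta/\sqrt{y})$, and you then change variables as the paper does with $a_\infty=\sqrt{y}$.

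One slip: the substitution that makes the exponentials agree is $t^2=y/\delta^2$, not $2y/\delta^2$. With it, the two prefactors differ exactly by $\pi/(2\zeta(2))=\vol(\widehat\Gamma)\vol(K_\infty)/2$, which is the factor relating the adelic integral over $G(\QQ)\backslash G(\A)$ to the classical Petersson integral over $\SL_2(\ZZ)\backslash\HH$, so no unexplained constant remains.
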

\begin{proof}
It is easy to use Lemma~\ref{lemma:lemtbcfromunf1} to show that the exponential part contained in the integral appearing in Theorem~\ref{Fourier expansion} is
\begin{equation}\label{eq;inprfdurshri}
\begin{split}
&\exp (-2\pi \qform(x_0/\indco) a^2)\widetilde{\varphi_0}\big(h_Y^{-1}ax_0/\indco,0,\lvert \qform(Y)\rvert^{1/2}\indco a^{-1}\big)
\\
    &=
    \exp\Big(
    -\frac{2\pi a^2x_0^2}{\indco^2} + \frac{\pi \indco^2 Y^2}{2a^2} + \frac{2\pi a^2 (x_0,Y)^2}{\indco^2 Y^2}
    \Big).
    \end{split}\end{equation} 

We now relate Borcherds' decomposed polynomials with the polynomials~$\widetilde{Q}_{\indone,\indtwo}$ appearing in Theorem~\ref{Fourier expansion}.
Without loss of generality, we may assume that~$\indone\leq\indtwo$.
Lemma~\ref{lemma:lemtbcfromunf1} \ref{item;lemtbcfromunf12} and~\ref{item;lemtbcfromunf13} imply that
\begin{equation}\label{eq;borchpolinQ}
\begin{split}
&\exp\Big(-\frac{\Delta}{8\pi a^2}\Big)(\mathcal{P}_{(\indone,\indone'),(h_Z^{-1})^\#,r,0})((h_Z^{-1})^\# x_0/\indco) \\
&=
\begin{cases}
a^{-2}
\widetilde{\Gpol}_{(\indone,\indtwo)}(h_Y^{-1} a x_0/\indco) P_{(\indone,\indone')}(-i |q(Y)|^{1/2} \delta a^{-1})
& \text{if $\alpha,\alpha'>1$ and~$r=0$,}\\
2i\indco^{-1} 
\widetilde{\Gpol}_{(1,\indtwo)}(h_Y^{-1} a x_0/\indco) P_{(1,\indone')}(-i |q(Y)|^{1/2} \delta a^{-1})
& \text{if $\indone=1$, $\indtwo>1$ and $r=1$,} \\
-\frac{4a^2}{\indco^2} 
\widetilde{\Gpol}_{(1,1)}(h_Y^{-1} a x_0/\indco) P_{(1,1)}(-i |q(Y)|^{1/2} \delta a^{-1})
& \text{if $\alpha=\alpha'=1$ and $r=2$,}\\
0 & \text{otherwise.}
\end{cases}
        \end{split}
    \end{equation}    

    The proof ends with a direct check, first performing the change of variable~$a=\sqrt{y}$ in the integral and then substituting the exponential part~\eqref{eq;inprfdurshri} and the polynomials~\eqref{eq;borchpolinQ} into the Fourier coefficients~\eqref{eq;coefffromunfinj1}.

\end{proof}

\section{Harmonicity of the lifts}\label{sec:harmonic}

Let~$\mathfrak{g}$ and~$\mathfrak{h}$ be the Lie algebras of~$G'_{F_\infty}$ and~$H(F_\infty)$.
Note that since~$G'_{F_\infty}$ is a double cover of~$\SL_2(\RR)^d$, and hence locally isomorphic to~$\SL_2(\RR)^d$, then~$\mathfrak{g}\cong\mathfrak{sl}_2(\RR)^d$.
We denote by~$d\omega$ the \emph{differential} of the Weil representation.
This is a representation of~$\mathfrak{g}\times\mathfrak{h}$ on~$\mathcal{S}(V_\infty)$, which extends to the product~$U(\mathfrak{g})\times U(\mathfrak{h})$ of universal enveloping algebras.

Recall that~$U(\mathfrak{g})\cong U(\mathfrak{sl}_2(\RR))^{\otimes d}$. The same split holds for the center of~$U(\mathfrak{g})$.
Let~$C_{G,r}$ be the Casimir element of the~$r$-th copy of~$\mathfrak{sl}_2(\RR)$ appearing in~$\mathfrak{g}$ and 
$$C_G \coloneqq  \sum_{i=1}^d C_{G,r}.$$
A similar notation is used when~$G$ is replaced with~$H$.

The following result follows from the case of~$F=\QQ$ considered by Shintani~\cite[Lemma~1.5]{shintani} and Howe~\cite{Howe}, see also~\cite[Theorem~II.4.1 and Lemma~II.4.1]{millson}.
\begin{theorem}[Shintani\footnote{Shintani works with the element~$C_Q$ defined in~\cite[(1.18)]{shintani} in place of the Casimir element~$C_G$.
    The latter is denoted by~$\Delta$ in~\cite[p.~153]{bump}.
    Comparing the two, we see that~$C_Q=-4 C_G$.}, Howe]\label{thm;howe}
    Let~$\mathfrak{z}$ and~$\mathfrak{z}'$ be the centers of~$U(\mathfrak{h})$ and~$U(\mathfrak{g})$.
    Then~$d\omega(\mathfrak{z})$ and~$d\omega(\mathfrak{z}')$ coincide as algebras of operators on~$\mathcal{S}(V_\infty)$.
    Furthermore, the Casimir elements satisfy
    \[
    d\omega(C_H)=d\omega(-4C_G)+\conmu,\qquad\text{where $\conmu\coloneqq d\Big(\frac{4-n^2}{4}\Big)$.}
    \]
\end{theorem}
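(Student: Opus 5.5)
The plan is to reduce everything to the case $F=\QQ$, i.e.\ to a single real place, and then add over places. The archimedean Weil representation on $\mathcal{S}(V_\infty)=\widehat{\bigotimes}_{r}\mathcal{S}(V_r)$ is the outer tensor product of the local Weil representations $\omega_r$ of $\mathrm{Mp}_2(\RR)\times \SO(V_r)$, with $V_r$ of signature $(n,2)$. Hence $d\omega$ restricted to the $r$-th copy of $\mathfrak{sl}_2(\RR)$ in $\mathfrak{g}$, and to the $r$-th factor $\mathfrak{so}(V_r)$ of $\mathfrak{h}$, acts only on the $r$-th tensor factor. Consequently
\[
\mathfrak{z}=\textstyle\bigotimes_r \mathfrak{z}_r,\qquad \mathfrak{z}'=\textstyle\bigotimes_r \mathfrak{z}'_r .
\]
Both $d\omega(\mathfrak{z})$ and $d\omega(\mathfrak{z}')$ are then generated by the images of the local centers acting on the respective factors.

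For the first assertion, I will apply Howe's result for the single dual pair $(\mathrm{Mp}_2(\RR),\mathrm{O}(n,2))$ at each place $r$. Howe's theorem states that $d\omega_r(\mathfrak{z}_r)=d\omega_r(\mathfrak{z}'_r)$ as algebras of operators on $\mathcal{S}(V_r)$. I would check that this also holds for $\SO$ in place of $\mathrm{O}$ on the relevant center. For $\SO(n,2)$ the center contains a Pfaffian-type element not coming from $\mathrm{O}$, and I must argue that it acts through $\mathfrak{z}'_r$ as well. This holds since the image of $U(\mathfrak{so})$ in the Weil representation commutes with $\mathrm{O}(V_r)$, and the Pfaffian maps to an $\mathrm{O}$-invariant operator; this is what Howe and Millson cover. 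Tensoring the local equalities over $r$, with operators acting on distinct factors, gives the equality of algebras on the completed tensor product. Continuity and density of the algebraic tensor product take care of completion issues.

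For the Casimir identity, I will invoke Shintani's Lemma~1.5 at each place. It gives $d\omega_r(C_{H,r})=d\omega_r(C_{Q,r})+\frac{4-n^2}{4}$ for signature $(n,2)$, that is, $m=n+2$ and constant $\frac{m(4-m)}{4}\cdot$ with the appropriate shift. Combined with the footnote relation $C_Q=-4C_G$, this yields
\[
d\omega(C_{H,r})=-4\,d\omega(C_{G,r})+\tfrac{4-n^2}{4}
\]
on each factor. I will double-check the constant by evaluating both sides on the Gaussian $\varphi_0$. There $C_G$ acts through the weight $k=\frac{n+2}{2}-2=\frac{n-2}{2}$, with the $K$-type of signature $(n,2)$ being $\frac{n}{2}-1$, and its Casimir eigenvalue is computed directly. $C_H$ acts on the $K_r$-invariant vector, which generates a known representation. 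This sanity check is where I expect sign and normalization trouble, so it is the main obstacle. Summing over $r=1,\dots,d$ then gives $d\omega(C_H)=-4\,d\omega(C_G)+d\cdot\frac{4-n^2}{4}=-4\,d\omega(C_G)+\xi$.
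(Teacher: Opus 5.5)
Your skeleton is the paper's own argument: the paper just cites Shintani's Lemma~1.5 and Howe for $F=\mathbb{Q}$ and passes to $F_\infty\cong\mathbb{R}^d$ factor by factor. Your bookkeeping is also fine: centers of tensor products, summing $d$ copies of the local identity, and $\frac{m(4-m)}{4}=\frac{4-n^2}{4}$ for $m=n+2$, so no extra ``shift'' is needed.

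\textbf{The Pfaffian step is wrong as argued.} The image of $U(\mathfrak{so}(V_r))$ does not commute with $\omega(\mathrm{O}(V_r))$. For a reflection $s$ one has $\omega(s)\,d\omega(\mathrm{Pf})\,\omega(s)^{-1}=d\omega(\operatorname{Ad}(s)\mathrm{Pf})=-d\omega(\mathrm{Pf})$. So $d\omega(\mathrm{Pf})$ is \emph{anti}-invariant under $\mathrm{O}(V_r)\smallsetminus\mathrm{SO}(V_r)$, whereas everything in $d\omega(\mathfrak{z}'_r)$ commutes with all of $\omega(\mathrm{O}(V_r))$. Hence, for $n$ even and granting Howe for the $\mathrm{O}(V_r)$-invariant part of $\mathfrak{z}_r$, the first assertion is \emph{equivalent} to $d\omega(\mathrm{Pf})=0$. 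That vanishing is what you must prove.

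It holds because $n+2\ge 4$. The polynomials on $V_r\otimes\mathbb{C}$ decompose as $\bigoplus_{\ell,j}\mathrm{Harm}_\ell\,q^j$. Each summand is an irreducible $\mathfrak{so}(n+2,\mathbb{C})$-module (highest weight $\ell\varepsilon_1$) and is stable under $s$. So $\mathrm{Pf}$ acts on it by a scalar $c$, and conjugating by $\omega(s)$ gives $c=-c$. Thus the polynomial-coefficient operator $d\omega(\mathrm{Pf})$ kills all polynomials, hence vanishes. For $n=0$ both the argument and the conclusion fail.

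\textbf{Your sanity check cannot work as described.} In signature $(n,2)$ the Gaussian $\varphi_0$ is not an eigenvector of $d\omega(C_G)$ or of $d\omega(C_H)$. Write $\mathcal{S}(V_r)=\mathcal{S}(\mathbb{R}^{n,0})\widehat{\otimes}\mathcal{S}(\mathbb{R}^{0,2})$ with $\mathfrak{sl}_2$ acting diagonally. Then $\varphi_0=\varphi_+\otimes\varphi_-$, with $\varphi_+$ a lowest and $\varphi_-$ a highest weight vector. So both the raising and lowering operators act nontrivially, and $d\omega(C_G)\varphi_0$ has a component $R\varphi_+\otimes L\varphi_-$ that is not a multiple of $\varphi_0$. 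There is no eigenvalue to read off.

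To pin down the normalizations yourself, check the operator identity directly. Let $\epsilon_i=\pm1$ be the signs of the form, $\Delta_V=\sum_i\epsilon_i\partial_i^2$, $E$ the Euler operator, and take the $\mathfrak{sl}_2$-triple $h=E+\frac{n+2}{2}$, $e=\frac12(x,x)$, $f=-\frac12\Delta_V$. Then
\[
-\sum_{i<j}\epsilon_i\epsilon_j\bigl(\epsilon_jx_j\partial_i-\epsilon_ix_i\partial_j\bigr)^2=-(x,x)\Delta_V+E(E+n)=h^2+2ef+2fe+\frac{(n+2)(2-n)}{4}.
\]
The left side is $d\omega$ of the suitably normalized Casimir of $\mathfrak{so}(V_r)$. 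On the right, $h^2+2ef+2fe$ is $-4$ times Bump's Casimir, i.e.\ Shintani's $C_Q$. This is exactly the content of Shintani's lemma.
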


Let~$\varphi\in\mathcal{S}(V_\infty)\otimes\mathcal{A}^{2d}(\domain)$,
and let
\[
\theta(g'_\infty,\varphi)\coloneqq\sum_{\mu\in \dual{L}/L}\sum_{v\in \mu + L}\big(\omega(g'_\infty)\varphi\big)(v)\mathfrak{e}_\mu,
\qquad g_\infty' \in G_{F_\infty}',
\]
be the $\CC[L^\vee/L]$-valued theta function corresponding to~$\varphi$.
Note that here we drop the variable~$h\in H(\A_f)$ appearing in Definition~\ref{def:vvKMtheta}, since the harmonicity of the lift does not depend on the connected component of~$X_K$ to which the differential form is restricted.
In Appendix~\ref{sec:append:diff} we recall how to differentiate such theta functions under the Casimir operators.

From now on, we assume that~$\varphi\in[\mathcal{S}(V_\infty)\otimes\mathcal{A}^{2d}(\domain)]^{H(F_\infty)}$ has parallel weight $k\in\ZZ+\frac{n}{2}$. 
Let~$g_\tau'=(g_\tau,y^{-1/4})\in G'_{F_\infty}$ be the standard element mapping~$\mathbf{i}$ to~$\tau\in\HH^d$.
Then the new theta function
\begin{equation}\label{eq;newthetatau}
\theta(\tau,\varphi)\coloneqq y^{-k/2}\theta(g_\tau',\varphi)
\end{equation}
is a~$\CC[L^\vee/L]\otimes\mathcal{A}^{2d}(\domain)$-valued smooth map on~$\HH^d$ that behaves as a Hilbert modular form of parallel weight~$k$ with respect to the Weil representation~$\rho_L$, see Section~\ref{sec:groupvarexpl}.

\begin{definition}
Let~$\Lambda_\varphi\colon S_{k,L}\to\mathcal{A}^{2d}(\domain)$ be the theta lift defined as
\begin{align*}
\Lambda_\varphi(f)\coloneqq \big(\theta(\cdot{,}\varphi),f\big)_{\Pet}
= \int_{\Gamma\backslash \HH^d} y^{k} \langle \theta(\tau,\varphi),f(\tau)\rangle \,d\mu,
\end{align*}
where $d\mu$ is the standard invariant measure of~$\HH^d$.
\end{definition}

Let~$\Omega\coloneqq\sum_r\Omega_r$, where~$\Omega_r$ is the invariant Laplace operator on the $r$-th grassmannian~$\domain_r\hookrightarrow \mathcal{D}$, see~\cite[Section~4.1]{bruinier-habil} for details.
The differential form~$\Lambda_\varphi(f)$ is an eigenfunction of~$\Omega$, as illustrated in the following result.

\begin{theorem}\label{thm:omegatodeltak}
    Let~$\varphi\in[\mathcal{S}(V_\infty)\otimes\mathcal{A}^{2d}(\domain)]^{H(F_\infty)}$ be of parallel weight~$k$.
    Then
    \[
    \Omega\Lambda_\varphi(f)=\Big(dk(2-k) -\xi \Big)\Lambda_\varphi(f).
    \]
    In particular, if~$k=1+n/2$, then~$\Omega\Lambda_\varphi(f)=0$, namely~$\Lambda_\varphi(f)$ is a \emph{harmonic} differential form.
\end{theorem}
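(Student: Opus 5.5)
We prove the identity by moving $\Omega$ from the $\domain$-variable onto the Schwartz form, then onto the $G$-side via the Howe duality relation of Theorem~\ref{thm;howe}, and finally onto $f$ by self-adjointness.

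\textbf{Step 1: from $\Omega$ to $C_H$.} Since $\varphi$ is $H(F_\infty)$-invariant, the invariant Laplacian $\Omega_r$ acting on the $\mathcal{A}^{2d}(\domain)$-valued form $z\mapsto\varphi(x,z)$ agrees, up to the standard normalisation constant, with the action of the Casimir $C_{H,r}$ through $d\omega$. This is the usual identification of $\Omega_r$ with the Casimir acting on $\bigwedge(\mathfrak p^*)\otimes C^\infty$ via Kuga's lemma. With the normalisation of \cite[Section~4.1]{bruinier-habil}, we take it as $\Omega\,\varphi=d\omega(C_H)\varphi$.

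\textbf{Step 2: pass to the theta function and apply Howe duality.} Differentiation commutes with the lattice sum, by uniform convergence of derivatives of Schwartz functions. Hence
\[
\Omega\,\theta(\tau,\varphi)=\theta\big(\tau,d\omega(C_H)\varphi\big)=\theta\big(\tau,d\omega(-4C_G)\varphi\big)+\xi\,\theta(\tau,\varphi),
\]
the second equality being Theorem~\ref{thm;howe}. Next we use the standard computation recalled in Appendix~\ref{sec:append:diff}. Because $\varphi$ has parallel weight $k$, the element $C_{G,r}$ acting on $g'\mapsto\theta(g',\varphi)$ corresponds, after the normalisation \eqref{eq;newthetatau}, to $-\tfrac14$ times the weight $k$ Laplacian in $\tau_r$. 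That is, $d\omega(-4C_{G,r})$ corresponds to
\[
\Delta_{k,r}=-y_r^2\big(\partial_{x_r}^2+\partial_{y_r}^2\big)+iky_r\big(\partial_{x_r}+i\partial_{y_r}\big),
\]
up to the sign convention of \cite{bump}. This gives $\Omega\,\theta(\tau,\varphi)=\big(\sum_r\Delta_{k,r}+\xi\big)\theta(\tau,\varphi)$.

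\textbf{Step 3: move the Laplacian onto $f$.} Interchanging $\Omega$ with the Petersson integral is justified by the rapid decay of the cusp form $f$ against the moderate growth of $\theta$ and its derivatives. Then
\[
\Omega\Lambda_\varphi(f)=\Big(\big(\textstyle\sum_r\Delta_{k,r}\big)\theta(\cdot,\varphi)+\xi\,\theta(\cdot,\varphi),\,f\Big)_{\Pet}.
\]
Each $\Delta_{k,r}$ is symmetric for the weight $k$ Petersson product, provided one of the two functions is cuspidal. This is proved by Green's formula on a truncated fundamental domain: the boundary terms vanish by the decay of $f$ at all $|\Cl(F)|$ cusps. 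So the operator can be moved onto $f$. Since $f$ is holomorphic, $\Delta_{k,r}f=0$ in the normalisation without the constant; the normalisation matching the Casimir is shifted, giving eigenvalue $k(2-k)$ on holomorphic forms up to sign. Summing over the $d$ places gives the factor $dk(2-k)$. Collecting the constant $-\xi$ with the sign dictated by $C_Q=-4C_G$ yields the stated formula $\Omega\Lambda_\varphi(f)=(dk(2-k)-\xi)\Lambda_\varphi(f)$.

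\textbf{Step 4: the harmonic case.} For $k=1+n/2$ we have $k(2-k)=(1+n/2)(1-n/2)=(4-n^2)/4$. Hence $dk(2-k)=\xi$, and therefore $\Omega\Lambda_\varphi(f)=0$.

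\textbf{Main obstacle.} The main difficulty is bookkeeping normalisations and signs. One must check that the eigenvalue of the (weight-shifted) Casimir on holomorphic weight $k$ forms is exactly $k(2-k)$ with the sign conventions for $C_G$, $C_Q$ and $\Omega$ chosen so that the final constant is $dk(2-k)-\xi$. I would fix this first by testing on $d=1$ against \cite[Theorem~4.1]{kudlamillson-tubes}. The analytic justification in Step~3 is routine once cuspidality is used.
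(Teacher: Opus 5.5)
Your route is the paper's route: identify $\Omega$ with $C_H$ acting through $d\omega$, apply the Shintani--Howe relation $d\omega(C_H)=d\omega(-4C_G)+\xi$, identify $C_{G,r}$ with a weight-$k$ Laplacian in $\tau_r$, and move that Laplacian onto $f$ by symmetry. There is, however, a genuine gap. The constant, which is the entire content of the statement, is never derived, and the formulas you actually write down give the wrong one.

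\textbf{Where the argument fails.} In Step~2 you let $d\omega(-4C_{G,r})$ correspond to
\[
\widetilde{\Delta}_{k,r}=-y_r^2(\partial_{x_r}^2+\partial_{y_r}^2)+iky_r(\partial_{x_r}+i\partial_{y_r})
\]
acting on $\theta(\tau,\varphi)$. This operator annihilates holomorphic functions and is symmetric for the weight-$k$ Petersson product. So Steps~1--3 taken literally give $(\widetilde{\Delta}_{k,r}\theta,f)_{\Pet}=(\theta,\widetilde{\Delta}_{k,r}f)_{\Pet}=0$, hence $\Omega\Lambda_\varphi(f)=\xi\Lambda_\varphi(f)$. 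For $k=1+n/2$ and $n\neq 2$ one has $\xi=d(4-n^2)/4\neq 0$. This is incompatible with the harmonicity you want whenever $\Lambda_\varphi(f)\neq 0$. The closing sentences of Step~3 (``the normalisation matching the Casimir is shifted\dots up to sign'', ``collecting the constant $-\xi$ with the sign dictated by $C_Q=-4C_G$'') only assert the target formula; they do not derive it.

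\textbf{The missing identification.} As in the paper, $C_{G,r}$ acts on the group-side function $g'\mapsto\theta(g',\varphi)$, that is, on the unitarised function $y^{k/2}\theta(\tau,\varphi)$. There \cite[Proposition~2.2.5]{bump} gives $d\rho_G(C_{G,r})=\Delta_{k,r}=-y_r^2(\partial_{x_r}^2+\partial_{y_r}^2)+iky_r\partial_{x_r}$, with no factor $-\tfrac14$. A direct computation shows
\[
y^{-k/2}\circ\Delta_{k,r}\circ y^{k/2}=\widetilde{\Delta}_{k,r}+\tfrac{k}{2}\big(1-\tfrac{k}{2}\big).
\]
So $y^{k/2}f$ is an eigenfunction of $\Delta_k=\sum_r\Delta_{k,r}$ with eigenvalue $d\tfrac{k}{2}(1-\tfrac{k}{2})$. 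This constant shift, multiplied by the $4$ coming from $-4C_G$, is the source of the term $dk(2-k)$; your operator $\widetilde{\Delta}_{k,r}$ misses it.

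You also need $d\rho_H(C_H)=-\Omega$, the identification the paper takes from \cite{helgason}. Your Step~1 uses the opposite sign, which would negate the final eigenvalue. With the correct sign one gets
\[
\Omega\big(y^{k/2}\theta\big)=4\Delta_k\big(y^{k/2}\theta\big)-\xi\,y^{k/2}\theta .
\]
Symmetry of $\Delta_k$ together with $4\cdot d\tfrac{k}{2}(1-\tfrac{k}{2})=dk(2-k)$ then yields $\Omega\Lambda_\varphi(f)=(dk(2-k)-\xi)\Lambda_\varphi(f)$.

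\textbf{What is fine.} Your Step~4 is correct. Your analytic justification in Step~3 is also acceptable: you use Green's formula with one cuspidal argument, where the paper uses symmetry on the $L^2$ space combined with subtracting an Eisenstein series from $\theta$.
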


\begin{proof}
Similarly as in~\cite[Section~2.1]{bump}, let~$\mathcal{C}^\infty_L(\HH^d,k)$ be the space of smooth functions $F\colon\HH^d\to\CC[L^\vee/L]$ such that
\begin{equation}\label{eq;unitmodtransf}
F(\gamma\tau)=\Big(\frac{\phi(\tau)}{\lvert \phi(\tau)\rvert}\Big)^{2k} \rho_L(\gamma,\phi)F(\tau)\qquad\text{for all $(\gamma,\phi)\in\Gamma'$}
\end{equation}
and such that they are square-integrable with respect to the Petersson inner product.
The latter is defined as usual: If $F_1,F_2$ behave as in~\eqref{eq;unitmodtransf}, then~$\langle F_1,F_2\rangle\colon\HH^d\to\CC$ is~$\Gamma$-invariant, and we denote
\[
((F_1{,}F_2))_{\Pet}\coloneqq\int_{\SL_2(\Oc)\backslash\HH^d} \langle F_1(\tau),F_2(\tau)\rangle\,d\mu
\]
whenever convergent.
Examples of functions in~$\mathcal{C}^\infty_L(\HH^d,k)$ are the maps~$y^{k/2}f$ where~$f\in S_{k,L}$.

By Corollary~\ref{cor:omtodelfortheta} we deduce that
    \begin{align*}
        \Omega\Lambda_\varphi(f)
        &=
        \Big( \Big(\Omega
        y^{k/2}\theta(\cdot,\varphi),
        y^{k/2}f\Big) \Big)_{\Pet}
        =
        4 \Big(\Big(\Delta_k\big(
        y^{k/2}
        \theta(\cdot{,}\varphi)\big),
        y^{k/2}
        f\Big) \Big)_{\Pet}
        -\xi\Lambda_\varphi(f).
    \end{align*}
    By \cite[Satz~3.1]{roelckeI}, see~\cite[Section~3.7]{völz} for a generalization to the vector-valued case and~\cite[Section~2.2]{BabaCP} for the Hilbert modular form case, the Laplacian~$\Delta_k$ is a symmetric operator on~$\mathcal{C}^\infty_L(\HH^d,k)$.
    Since~$\theta(\cdot{,}\varphi)$ minus a suitable Eisenstein series is of rapid decay with respect to~$y$ diverging to any cusp, see e.g.~\cite[Section~6.2]{bruinier-reg}, we deduce that
    \begin{align*}
    4 \Big(\Big(\Delta_k\big(
    y^{k/2}
    \theta(\cdot{,}\varphi)\big),
    y^{k/2}
    f\Big) \Big)_{\Pet}
    &=
    4 \Big(\Big(
    y^{k/2}
    \theta(\cdot{,}\varphi),\Delta_k\big(
    y^{k/2}
    f\big) \Big)\Big)_{\Pet}
    =
    dk(2-k)\Lambda_\varphi(f),
    \end{align*}
    where for the last equality we used that~$
    y^{k/2}f$ is an eigenform of~$\Delta_k$ with eigenvalue~$d\frac{k}{2}(1-\frac{k}{2})$, see Remark~\ref{rem-modformlaplacian}.
\end{proof}
Recall that if~$X_K$ is the Shimura variety~\eqref{eq;SHimvar}, then we denote by~$Y_K=\Gamma_K\backslash\domain^+$ the connected component of~$X_K$ arising from~$\Gamma_K=H(F)^+\cap K$.
Let~$H^{2d}(X_K,\CC)$ be the degree~$2d$ de Rham cohomology group of~$X_K$.
\begin{theorem}\label{cor;applicinL2}
    The Kudla--Millson lift~$\Lambda_{\varphiKM}=\Lambda_{\mathrm{KM}}^L$ of a Hilbert cusp form is a harmonic differential form on the orthogonal Shimura variety~$X_K$.
    If~$n>3$, then it is~$L^2$ and therefore
    \[
    \dim H^{2d}(X_K,\CC) \geq\dim \Lambda_{\varphiKM}(S_{k,L}).
    \]
    The same is true if~$X_K$ is replaced with its connected component~$Y_K$ and~$\Lambda_{\varphiKM}$ is restricted to~$Y_K$.
\end{theorem}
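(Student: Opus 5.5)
Harmonicity follows at once from Theorem~\ref{thm:omegatodeltak} applied to $\varphi=\varphiKM$. By Definition~\ref{def;KMschwartzprod}, $\varphiKM$ lies in $[\mathcal{S}(V_\infty)\otimes\mathcal{A}^{2d}(\domain)]^{H(F_\infty)}$. It has parallel weight $k=1+n/2$, because each factor $\varphiKM^{(\ind)}$ has weight $1+n/2$ under $K'_\infty$, as in \cite{kudla-cycles}. Hence $\Omega\Lambda_{\varphiKM}(f)=0$. The adelic lift $\KMliftL(f)$ restricted to any connected component $\Gamma_j\backslash\domain^+$ is of the form $\Lambda_\varphi(f)$, up to the normalization of measures and with $L$ replaced by $h_jL$. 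Indeed, by strong approximation $G(F)\backslash G(\A)/\widehat\Gamma K_\infty\cong\Gamma\backslash\HH^d$, so the adelic integral becomes the Petersson integral defining $\Lambda_\varphi$. So $\KMliftL(f)$ is harmonic on every component of $X_K$. I would also record that $\Omega$ is the Hodge Laplacian up to a constant on $H(F_\infty)$-invariant forms of this type. Strictly speaking, I expect the Kuga formula to identify the Casimir action with the de Rham Laplacian; this is what is used in \cite{kudlamillson-tubes} and \cite{bruinier-habil}.

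For square-integrability when $n>3$, I would use the Fourier expansion of Theorem~\ref{Fourier expansion} at each $0$-dimensional cusp, together with reduction theory. A Siegel set near a $0$-dimensional cusp is described in the tube domain by $X$ in a compact set and $Y$ in a cone truncated to $|q(Y)|$ large. One must also handle $1$-dimensional cusps; I would either treat them by a similar expansion along an isotropic plane, or note that they are covered by the same Siegel sets after the $Y$-cone is chopped.

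First, the nonconstant terms. Remark~\ref{rmk:2} and the integral \eqref{eq:Wx0} show that $W(Y,x)$ decays exponentially, like $K$-Bessel functions in $\sqrt{q(x_{Y^\perp})|q(Y)|}$. The sum over $x$ with $q(x)\gg0$ therefore decays rapidly in the Siegel set. The pullback by $(h_Z^{-1})^*$ gives forms of bounded pointwise norm, since $h_Z$ is an isometry. Hence these terms are $L^2$.

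Second, the constant term $\KMliftL_{,0}(f)(Y,h)$. This is a theta integral for $V_0$, namely the pairing of $f$ with the theta function $\Theta_{\mathrm{KM},0}$ at $\eta=(0,0)$. By Lemma~\ref{lemma; FtransfKMschwartz} it is $|a|\,\varphi_0(h_0^{-1}x_0)\widetilde{\Gpol}(h_0^{-1}x_0)$ with $a=|q(Y)|^{1/2}$, summed over $x_0\in V_0$. I would show that it grows at most like $|q(Y)|^{c}$ with an explicit $c$ depending on the weight, and compare this with the invariant volume $\Nm|q(Y)|^{-n}\,dX\,dY$. This comparison should produce the condition $n>3$.

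The main obstacle is this constant-term estimate. I would first try unfolding again, writing the $V_0$-theta integral as a lift of $f$ to $\SO(V_0)$ evaluated at $Y$. The component with $\underline{\indone}=\underline{\indtwo}=(1,\dots,1)$ carries the factor $P(0)=0$ and so vanishes. The remaining components scale homogeneously in $|q(Y)|$, with power at most $1/2$ times the degree. The decay of the volume element $\Nm|q(Y)|^{-n}$ beats $|q(Y)|^{1}$ exactly when $n>3$, which matches the Zucker and Borel bounds quoted in the introduction.

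Finally, I would deduce the cohomology statement. An $L^2$ harmonic form is closed and coclosed. In degree $2d$, below the middle when $n>3$, $L^2$-cohomology is isomorphic to de Rham cohomology (Zucker, Borel). An injective map from $L^2$ harmonic forms then gives $\dim H^{2d}\ge\dim\Lambda(S_{k,L})$. For $Y_K$ I would repeat the argument on a single component.
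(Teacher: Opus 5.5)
Your harmonicity argument and the final cohomological step match the paper: Theorem~\ref{thm:omegatodeltak}, then $L^2$-Hodge theory \cite{BG83} and the Harris--Zucker injection \cite{harriszucker}. One correction there: degree $2d$ is admissible because $2d\le d(n-1)$, the codimension of the Baily--Borel boundary, not because it lies below the middle degree. The gap is in square-integrability. The paper does not use the Fourier expansion for this step. It invokes the procedure of \cite[Proposition~4.1]{bruinier-funke-inj}, which reduces to Weil's convergence criterion \cite{weil-convcrit}. Essentially, the $L^2$-norm is controlled by $\int_{X_K}\theta(g_1)\wedge*\overline{\theta(g_2)}$, a genus-two theta integral. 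It converges absolutely when $\dim V-r>3$ with $r\le 2$ the Witt index, and this is exactly where $n>3$ comes from.

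Your substitute does not establish this, and it puts the condition $n>3$ in the wrong place.

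\emph{The radial estimate.} By Lemma~\ref{lemma; FtransfKMschwartz} at $\eta=(0,0)$, only components with all $\alpha_r,\alpha'_r>1$ survive in $\KMliftL_{,0}(f)$, not merely all but $(1,\dots,1)$. The coefficient is $\Nm(|q(Y)|)^{1/2}$ times a function $\Phi(\hat Y)$ of the direction of $Y$ alone, namely a theta lift of $f$ to $\mathrm{SO}(V_0)$ evaluated at $h_Y$. Take $F=\QQ$ (the general case is the same with norms), write $s=|q(Y)|^{1/2}$, and let $\lambda$ be the height of $\hat Y$ in the hyperbolic space of $V_0$. The invariant measure is then $s^{-n}\lambda^{-(n-2)}\,\frac{ds}{s}\frac{d\lambda}{\lambda}$, up to bounded variables. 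The radial integral $\int^\infty s^{2-n}\frac{ds}{s}$ already converges for $n>2$. So comparing $|q(Y)|$ with $|q(Y)|^{-n}$ cannot produce $n>3$.

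\emph{The missing analysis near the boundary of the cone.} If $V_0$ is isotropic, unfold $\Phi$ at a cusp of $\mathrm{SO}(V_0)$; the same lemma applied to $V_0$ shows that $\Phi$ grows linearly in $\lambda$ (at most, and in general exactly). You must therefore bound $\int s^2\lambda^2$ over a Siegel set of the minimal parabolic, $\{\lambda>c_0,\ s>c_0\lambda\}$. That set gives $\int^\infty\lambda^{6-2n}\frac{d\lambda}{\lambda}$, which is finite exactly for $n>3$. Your region is $\{|q(Y)|$ large$\}$ with $\hat Y$ arbitrary modulo the Levi. On it, the bound $\Phi\lesssim\lambda$ only gives convergence for $n>4$. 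The proposal contains none of the needed pieces: the second unfolding, the correct Siegel sets, and decay of the nonconstant terms uniformly as $\lambda\to\infty$.

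\emph{Coverage of cusps.} Theorem~\ref{Fourier expansion} is only available at the cusp $\ell$, for $\mP(\zeta)$, and for lattices splitting off $U_\af$ (cf.\ Remark~\ref{rmk:1}). The remaining cusps and components are not covered.

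As it stands, the $L^2$ claim, and with it the dimension bound, is unproved. You need either to carry out this reduction-theoretic analysis in full or to use the Weil-criterion argument.
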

\begin{proof}
    The harmonicity of the lift follows from Theorem~\ref{thm:omegatodeltak}, since the Kudla--Millson Schwartz function is~$H(F_\infty)$-invariant and of parallel weight~$k=(1+n/2,\dots,1+n/2)$.
    Assume now that~$n>3$.
    With the same procedure as~\cite[Proposition~4.1]{bruinier-funke-inj}, which boils down to the Weil convergence criterion~\cite{weil-convcrit}, one can show that the lift is square-integrable.
By~$L^2$-Hodge theory, the $L^2$-cohomology group of degree~$m$ on~$Y_K$ is isomorphic to the space of harmonic $m$-forms on~$Y_K$, for every~$m$ \cite{BG83}.
Then the bound on cohomology follows from the fact that the~$L^2$-cohomology $H^r_{(2)}(Y_K,\CC)$ injects into~$H^r(Y_K,\CC)$ whenever~$r\leq c$, where~$c$ is the codimension of the singular locus of the Baily--Borel compactification~$\overline{Y_K}$ \cite[Section~5]{harriszucker}. 
Since~$c\geq \mathrm{codim} (\overline{Y_K}\setminus Y_K)\geq d(n-1)$, we have the aformentioned injection with $r=2d$ if~$n\geq 3$.
\end{proof}

\appendix
\section{Borcherds' formalism and infinitesimal actions}\label{appendix:Ftransf}
In this appendix we give details on how to compute the partial Fourier transforms provided in Lemma~\ref{lemma; FtransfKMschwartz}, the description over the tube domain model~$\tubedom$ of Borcherds' formalism and some ancillary results of differential operators on theta functions.

\subsection{A hint of the proof of Lemma~\ref{lemma; FtransfKMschwartz}}\label{sec:aphintproof}
    Recall that~$\varphiKM$ is defined as a product of Schwartz functions~$\varphiKM^{(i)}\in\mathcal{S}(V_i)\otimes\mathcal{Z}^2(D_i)$ for~$i=1,\dots,d$.
    Therefore,~$\mathcal{F}_\infty(\varphiKM)$ splits into a product of partial Fourier transforms of the~$\varphiKM^{(i)}$.
    We may then assume without loss of generality that~$F=\QQ$.
    
	Write~$\genvec=\sum_j \coeff_j\basevec_j\in V(\RR)$ over the pseudo-orthonormal basis~$(\basevec_j)_j$ of~$V(\RR)$ as in Section~\ref{sec:prelim}.
	Recall that if~$h_\infty\in P(\RR)$ is such that~$hz_0=z$, then
	\[
	\mathcal{F}_\infty(\varphiKM(z))(x_0,\eta_1,\eta_2)
	=
	\sum_{\indone,\indtwo=1}^n
	\mathcal{F}_\infty\big(\Gpol_{(\indone,\indtwo)}\varphi_0\circ h_\infty^{-1})(x_0,\eta_1,\eta_2)\cdot (h_\infty^{-1})^*(\omega_{\indone,\indtwo}).
	\]

	We rewrite the polynomials~$\Gpol_{(\alpha,\alpha')}$ with respect to the coordinates~$x_1,x_2$ of~$\ell'(\RR)\oplus\ell(\RR)$ and~$y_2,\dots,y_{n+1}$ of~$V_0(\RR)$ as 
	\[
	\Gpol_{(\alpha,\alpha')}(\genvec)
	=
	\begin{cases}
	(x_{1}+x_{2})^2-\frac{1}{2\pi} & \text{if~$\alpha=\alpha'=1$,}
	\\
    2y_{\alpha}^2 - \frac{1}{2\pi} & \text{if~${\alpha}={\alpha'}>1$.}
    \\
	\sqrt{2}(x_{1}+x_{2})y_{\alpha'} & \text{if~$\alpha=1 <\alpha'$,}
	\\
    \sqrt{2}(x_{1}+x_{2})y_{\alpha} & \text{if~$\alpha'=1 <\alpha$,}
	\\
	2y_{\alpha} y_{\alpha'} & \text{if~${\alpha}\neq{\alpha'}$ and~${\alpha},{\alpha'}>1$,}
	\end{cases}
	\]
	Note that~$\Gpol_{(\indone,\indtwo)}=\Gpol_{(\indtwo,\indone)}$ and that~$\Gpol_{(\indone,\indtwo)}$ depends on the variable of integration~$x_2$ only if~$\indone=1$ or~$\indtwo=1$.
	Let~$\varphi_{\RR}$ be the standard Gaussian on~$\RR$, and let~$\varphi_{j,\RR}(x_0,\eta_1,\eta_2)\coloneqq \varphi_\RR(\eta_j)$.
	One can show that 
	$\mathcal{F}_\infty\big(
	\Gpol_{(\indone,\indtwo)}\varphi_0
	\big)
	(x_0,\eta_1,\eta_2)
	=
	\varphi_{0}(x_0)
	\varphi_{\RR}(\eta_1)
	\mathcal{F}_\infty\big(
	\Gpol_{(\indone,\indtwo)}\varphi_{2,\RR}
	\big)
	(x_0,\eta_1,\eta_2)$
	and
	\begin{align}\label{eq;PFTz0}
	\mathcal{F}_\infty\big(
	\Gpol_{(\indone,\indtwo)}\varphi_{2,\RR}
	\big)
	(x_0,\eta_1,\eta_2)=\begin{cases}
	\Big(\Big(\eta_{1}^2 - \frac{1}{2\pi}\Big)
	-
	2i\eta_{1}\eta_{2}
	-
	\Big(\eta_{2}^2 - \frac{1}{2\pi}\Big)
	\Big)
	\varphi_{\RR}(\eta_2)
	& \text{if~$\indone=\indtwo=1$,}
	\\
	\sqrt{2}y_\indtwo
	(\eta_{1} - i\eta_{2})
	\varphi_{\RR}(\eta_2)
	&\text{if~$\indone=1 <\indtwo$,}
	\\
	\Gpol_{(\indone,\indtwo)}(x_0)\varphi_{\RR}(\eta_2)
	& \text{if~$\indone,\indtwo>1$.}
	\end{cases}
	\end{align}

\subsection{Borcherds' formalism over the tube domain model}\label{sec;appborc}

In this section we explain how to describe Borcherds' formalism, in particular the standard Gaussian and the decomposition~\eqref{eq;decomppolasinbor} of homogeneous polynomials, in terms of explicit functions on the tube domain model~$\tubedom$.
Along the way, we will prove Lemma~\ref{lemma:lemtbcfromunf1}.

We begin with a rewriting of the Gaussian~$\varphi_0$, see~\eqref{eq;stdgaus+Qab}.
If~$x_0=\sum_{j=2}^{n+1}y_je_j\in V_0(\RR)$, then
\begin{align*}
    \varphi_0(x_0)&=
    \exp\big(-\pi x_0^2+2\pi (x_0|_{\RR e_{n+1}})^2\big),
\end{align*}
where~$x_0|_W$ denotes the orthogonal projection of $x_0$ to some subspace~$W\subset V_0$.
Now we compose with~$h_Y^{-1}$. Since~$x_0|_{\RR Y}=\frac{(x_0,Y)}{Y^2}Y$, we deduce that
\begin{align*}
    \varphi_0(h_Y^{-1}(x_0))&=
    \exp\Big(-\pi \underbrace{(h_Y^{-1}x_0)^2}_{=x_0^2}+2\pi \underbrace{((h_Y^{-1}x_0)|_{\RR e_{n+1}})^2}_{=(x_0|_{h_Y(\RR e_{n+1})})^2}\Big)
    =
    \exp\Big(-\pi x_0^2 + 2\pi\frac{(x_0,Y)^2}{Y^2}\Big).
\end{align*}
This shows~\ref{item;lemtbcfromunf10} of Lemma~\ref{lemma:lemtbcfromunf1}.

We now describe the value~$(h^{-1})^\#v$, with~$v\in V(\RR)$, over the tube domain~$\tubedom$.
Here $h\in H(\RR)$ is any isometry mapping~$Z_0$ to~$Z\in \tubedom$.
Note that~$(h^{-1})^\#v=h^{-1}(v-v_{\RR \genU_{z^\perp}}-v_{\RR \genU_z})$.
By~\cite[Lemma~4.1]{zuffetti_unfolding1} we know that
the orthogonal projection of~$v$ to the line~$\RR \genU_z$ is
\begin{align*}
    v_{\RR \genU_z}
    &=
    \frac{(v,\genU_z)}{\genU_z^2}u_z
    =
    \frac{(v,X+\genUU+(q(Y)-q(X))\genU)}{Y^2}\cdot (X+\genUU+(q(Y)-q(X))\genU).
\end{align*}
Similarly, for~$v_{\RR \genU_{z^\perp}}$ we have
\begin{align*}
    v_{\RR \genU_{z^\perp}}
    &=
    \frac{(v,\genU_{z^\perp})}{\genU_{z^\perp}^2}\genU_{z^\perp}
    =
    -\frac{(v,-X-\genUU+(q(X)+q(Y))\genU)}{Y^2}\cdot(-X-\genUU+(q(X)+q(Y))\genU).
\end{align*}
It is then easy to show that
\begin{align*}
    (h^{-1})^\#v
    &=
    h^{-1}\big(v-(v,X+\genUU-X^2\genU)\genU - (v,\genU)X-(v,\genU)\genUU\big).
\end{align*}

    We now rewrite of~$(h^{-1})^\#v$ over the split~$V(\RR)=\ell\oplus V_0(\RR)\oplus \ell'$.
    Suppose that $v=x_1\genUU+x_0+x_2\genU$ for some~$x_1,x_2\in\RR$ and~$x_0\in V_0(\RR)$, so that~$v=(x_2,x_0,x_1)^t$.
    Then
    \begin{align}\label{action of h inverse}
        (h^{-1})^\#v&=
        h^{-1}\left(\begin{smallmatrix}
            x_2-(v,X+\genUU-X^2\genU) \\
            x_0-(v,\genU)X \\
            x_1-(v,\genU)
        \end{smallmatrix}\right).
    \end{align}
    If~$h=h_Z$ lies in the parabolic~$P(\RR)$ stabilizing~$\ell=\RR\genU$, then
    \begin{equation}\label{eq;indcF=QQ}
        (h_Z^{-1})^\#v
        =
        \left(\begin{smallmatrix}
        \lvert q(Y)\rvert^{-1/2}(x_2+(v,-X-\genUU+q(X)\genU)+(X,x_0-x_1X/2)\\
        h_Y^{-1}(x_0-x_1X)\\
        \lvert q(Y)\rvert^{1/2}(x_1-(v,\genU))
        \end{smallmatrix}\right).
    \end{equation}
    It is enough to replace~$x_1=(v,\genU)$ and~$x_2=(v,\genUU)$ in~\eqref{eq;indcF=QQ} to deduce ~\ref{item;lemtbcfromunf11} of Lemma~\ref{lemma:lemtbcfromunf1}.


    We now describe the decomposition~\eqref{eq;decomppolasinbor} of the polynomials~$\mathcal{P}_{(\indone,\indtwo)}$ in terms of the tube domain model~$\tubedom$.
    By~\cite[Lemma 3.6]{zuffetti_unfolding1}, since~$\genU_{z^\perp}^2=-1/Y^2$ and~$h_Z^{-1}(\genU)=\lvert q(Y)\rvert^{-1/2}\genU$, one can show that
    \begin{align*}
            &\mathcal{P}_{(\indone,\indtwo),(h_Z^{-1})^\#,r,0}((h^{-1}_Z)^\#v)\\
            &
            =
            \begin{cases}
                2Y^4(\lvert q(Y)\rvert^{-1/2}\genU,e_\indone)(\lvert q(Y)\rvert^{-1/2}\genU,e_\indtwo) & \text{if $r=2$},
                \\
                -2Y^2(\lvert q(Y)\rvert^{-1/2}\genU,e_\indone)((h^{-1}_Z)^\#v,e_\indtwo) - 2Y^2(\lvert q(Y)\rvert^{-1/2}\genU,e_\indtwo)((h^{-1}_Z)^\#v,e_\indone) & \text{if $r=1$},
                \\
                2((h^{-1}_Z)^\#v,e_\indone)((h^{-1}_Z)^\#v,e_\indtwo) & \text{if $r=0$.}
            \end{cases}
        \end{align*}
        An easy calculation using~\ref{item;lemtbcfromunf11} of Lemma~\ref{lemma:lemtbcfromunf1} shows that the previous description of $\mathcal{P}_{(\indone,\indtwo),(h_Z^{-1})^\#,r,0}((h^{-1}_Z)^\#v)$ boils down to~\ref{item;lemtbcfromunf12}.
    
    Now we compute the image of such polynomials under the operator $\exp\big(-\frac{\Delta}{8\pi y}\big)$.
    Recall that~$\mathcal{P}_{(\indone,\indtwo),(h_Z^{-1})^\#,0,0}$ is a polynomial on~$(h_Z^{-1})^\#(V(\RR))=V_0(\RR)$ by Lemma~\ref{lemma:lemtbcfromunf1} \ref{item;lemtbcfromunf11}.
    If~$a=\sum_{j=1}^{n+2}y_je_j\in V_0(\RR)$, then by~\cite[Lemma~3.6]{zuffetti_unfolding1} we know that if~$\alpha>1$, then $\mathcal{P}_{(\indone,\indone),(h_Z^{-1})^\#,0,0}(a)=2y_\indone^2$, but for~$\alpha=1$ that polynomial vanishes since~$\genU\perp V_0(\RR)$.
    Hence
    \begin{align*}
        -\frac{\Delta}{8\pi y}\Big(\mathcal{P}_{(\indone,\indone),(h_Z^{-1})^\#,0,0}\Big)
        =
        \begin{cases}
        -\frac{1}{2\pi y} & \text{if $\alpha>1$,}
        \\
        0 & \text{if $\alpha=1$.}
        \end{cases}
    \end{align*}
    In fact, the only non-trivial derivatives appear when $r=0$ and~$\alpha=\indtwo$, since only in these cases~$\mathcal{P}_{(\alpha,\indtwo),(h^{-1}_Z)^\#,r,0}$ is a polynomial containing a power~$y_j^2$ for some~$j$.
    This implies Lemma~\ref{lemma:lemtbcfromunf1}~\ref{item;lemtbcfromunf13}.

\subsection{Differential operators on theta functions}\label{sec:append:diff}


The following result, whose proof is standard and hence omitted, illustrates how to differentiate under~$d\omega$ the theta function with respect to the variables~$g$ and~$z$.

Recall
that the \emph{right regular actions} of~$G'_{F_\infty}$ and~$H(F_\infty)$ on respectively~$\mathcal{C}^{\infty}(G'_{F_\infty})$ and~$\mathcal{A}^{*}(\domain)$ are defined as
\begin{align*}
    (\rho_G(g_0)f)(g) &\coloneqq f(gg_0) \qquad\text{$f\in \mathcal{C}^\infty(G'_{F_\infty})$, $g,g_0\in G'_{F_\infty}$,}\\
    \rho_H(h)\alpha &\coloneqq h^*\alpha\qquad \text{$\alpha\in\mathcal{A}^*(\domain)$, $h\in H(F_\infty)$.}
\end{align*}
\begin{lemma}\label{lemma;rightgesss?}
Let~$\varphi\in\mathcal{S}(V_\infty)\otimes\mathcal{A}^{2d}(\domain)$.
    Then
    \[
    \big(d\rho_G(X)\theta(\cdot,\varphi)\big)(g)
    =
    \theta\big(g,d\omega(X)\varphi\big)
    \qquad\text{for all~$X\in U(\mathfrak{g})$.}
    \]
    If furthermore~$\varphi$ is~$H(F_\infty)$-invariant, then
    \[
    d\rho_H(Y)\theta(g,\varphi)
    =
    \theta\big(g,d\omega(Y)\varphi\big)
    \qquad\text{for all~$Y\in U(\mathfrak{h})$.}
    \]
\end{lemma}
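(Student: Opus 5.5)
The plan is to verify the two identities directly from the definition of the theta function as a lattice sum, differentiating term by term. First, for $X\in\mathfrak{g}$ and $g\in G'_{F_\infty}$, I would write
\[
\big(d\rho_G(X)\theta(\cdot,\varphi)\big)(g)=\frac{d}{dt}\Big|_{t=0}\sum_{\mu}\sum_{v\in\mu+L}\big(\omega(g\exp(tX))\varphi\big)(v)\,\mathfrak{e}_\mu .
\]
Since $\omega$ is a representation, $\omega(g\exp(tX))\varphi=\omega(g)\omega(\exp(tX))\varphi$, and $t\mapsto\omega(\exp(tX))\varphi$ is differentiable in the Schwartz topology with derivative $d\omega(X)\varphi$ at $t=0$. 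Because $\varphi$ is a Schwartz function, valued in the finite-dimensional fibres of $\mathcal{A}^{2d}$ over compact subsets of $\domain$, I can interchange derivative and lattice sum. The justification is that the sum $\sum_{v}\phi(v)$ is a continuous linear functional on $\mathcal{S}(V_\infty)$, and that $\omega(g)$ acts continuously on $\mathcal{S}(V_\infty)$. This gives the identity for $X\in\mathfrak{g}$. The extension to $U(\mathfrak{g})$ follows by induction on the degree of monomials, using that both $d\rho_G$ and $d\omega$ are algebra homomorphisms and that $d\omega(X)\varphi$ is again a Schwartz function.

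For the $H$-part, I would use the invariance \eqref{eq;invariance}: $h^*\varphi(hx)=\varphi(x)$, i.e.\ $\varphi(h^{-1}x)=h^*\varphi(x)$, where $\omega(h)\varphi(x)=\varphi(h^{-1}x)$ is the linear action. The form $\theta(g,\varphi)$ depends on $z$ only through $\varphi$. Applying $\rho_H(h)=h^*$ to each term and using that $\omega(g)$ commutes with $\omega(h)$, I get
\[
h^*\theta(g,\varphi)=\sum_{v}\big(\omega(g)h^*\varphi\big)(v)=\sum_{v}\big(\omega(g)\omega(h)\varphi\big)(v)=\theta(g,\omega(h)\varphi),
\]
where $\omega(h)$ acts only on the Schwartz variable and not on the lattice sum. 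One caveat is that $h^*$ acting on forms is only the geometric action, while the lattice sum is unchanged because $h$ is real and we do not substitute in the summation variable. Differentiating at $h=\exp(tY)$ and again interchanging derivative and sum gives the claim for $Y\in\mathfrak{h}$, and then for $U(\mathfrak{h})$ by iteration. I have to check that after one differentiation the new function $d\omega(Y)\varphi$ still satisfies the invariance needed for the next step. Alternatively, I can avoid this by working directly with $h^*\theta=\theta(\cdot,\omega(h)\varphi)$ as an identity of smooth functions of $h$ and differentiating that identity repeatedly, which needs no invariance of the derivatives.

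The only delicate step is the uniform convergence that justifies differentiating under the sum. I would handle it with the standard estimate: for Schwartz $\phi$ depending smoothly on parameters in a compact set, $\sum_v|\partial^\beta\phi(v)|$ converges locally uniformly, because polynomial weights times Gaussians are summable over the lattice $L\subset V_\infty$. Everything else is formal.
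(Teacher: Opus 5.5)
Your proposal is correct. It is the standard argument that the paper omits. For $G$, you differentiate the lattice sum term by term, which is justified because $\phi\mapsto\sum_v\phi(v)$ is continuous on $\mathcal{S}(V_\infty)$ and $t\mapsto\omega(\exp(tX))\varphi$ is differentiable in the Schwartz topology. For $H$, you use the invariance in the form $h^*\varphi=\omega(h)\varphi$ to obtain $h^*\theta(g,\varphi)=\theta(g,\omega(h)\varphi)$.

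For $U(\mathfrak{h})$, your second route is the one that works. The check you flagged for the naive iteration fails for non-central $Y$: the operation $\phi(x)\mapsto h^*\phi(hx)$ sends $d\omega(Y)\varphi$ to $d\omega(\mathrm{Ad}(h^{-1})Y)\varphi$, so $d\omega(Y)\varphi$ is not $H(F_\infty)$-invariant in general. Differentiating the group identity repeatedly automatically produces $d\rho_H$ extended to $U(\mathfrak{h})$ as an anti-homomorphism, as it must be for the right action $h\mapsto h^*$. This ordering issue is irrelevant for the Casimir $C_H$ used later.
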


We now focus on the Casimir elements of~$U(\mathfrak{g})$ and~$U(\mathfrak{h})$, explaining how they act on theta functions.
Let~$\varphi\in[\mathcal{S}(V_\infty)\otimes\mathcal{A}^{2d}(\domain)]^{H(F_\infty)}$.
By Lemma~\ref{lemma;rightgesss?} and Theorem~\ref{thm;howe} we may compute
\begin{equation}\label{eq;casimirsontheta}
    \begin{split}
    d\rho_H(C_H)\theta(g,\varphi)
     &=
     \theta(g,d\omega(C_H)\varphi)
     =
     \theta(g,d\omega(-4C_G)\varphi + \xi\varphi)
     \\
    &=-4\cdot d\rho_G(C_G)\theta(g,\varphi) + \xi\theta(g,\varphi).
    \end{split}
\end{equation}

We now rewrite everything in terms of vector-valued Hilbert modular forms.
Assume that~$\varphi$ has parallel weight~$k\in\ZZ+\frac{n}{2}$, and let~$\theta(\tau,\varphi)$ the~$\CC[L^\vee/L]\otimes\mathcal{A}^{2d}(\domain)$-valued theta function as in~\eqref{eq;newthetatau}.
By~\cite[Proposition~2.2.5]{bump} the operator~$d\rho(C_{G,r})$ acts on~$y^{k/2}\theta(\tau,\varphi)$ as the weight~$k$ hyperbolic Laplacian on the~$r$-th copy of~$\HH$ in~$\HH^d$, namely as
\[
\Delta_{k,r}=-y_r^2\Big(\frac{\partial^2}{\partial x_r^2} + \frac{\partial^2}{\partial y_r^2}\Big) + ik y_r\frac{\partial}{\partial x_r}.
\]
Therefore, $d\rho(C_G)$ acts on $y^{k/2}\theta(\tau,\varphi)$ as
\[
\Delta_k\coloneqq \sum_{r=1}^d\Delta_{k,r}.
\]
\begin{remark}\label{rem-modformlaplacian}
    Let~$f\in M_{k,L}$ be of parallel weight~$k$.
    Similarly as in~\cite[Exercise~2.1.7, (a)]{bump}, we have
    \[
\Delta_k \big(y^{k/2}f(\tau)\big) = d\frac{k}{2}\Big(1-\frac{k}{2}\Big)y^{k/2}f(\tau),
\]
namely~$y^{k/2}f$ is an eigenfunction of~$\Delta_k$ of eigenvalue~$d\frac{k}{2}\big(1-\frac{k}{2}\big)$.
\end{remark}

Considering the theta function as a differential form on~$\domain$, by~\cite[p.~451, Ex.\ 5]{helgason} the operator~$d\rho_H(C_H)$ acts as~$-\Omega$.
Summarizing, we deduce the following result.
\begin{corollary}\label{cor:omtodelfortheta}
    Let~$\varphi\in [\mathcal{S}(V_\infty)\otimes\mathcal{A}^{2d}(\domain)]^{H(F_\infty)}$ be of parallel weight~$k$.
    Then
    \[
    -\Omega \Big(
    y^{k/2}
    \theta(\tau,\varphi)\Big) = -4\Delta_k\Big(
    y^{k/2}\theta(\tau,\varphi) \Big) + \xi 
    y^{k/2}
    \theta(\tau,\varphi).
    \]
\end{corollary}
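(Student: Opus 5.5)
Corollary~\ref{cor:omtodelfortheta} follows by combining three ingredients that are already in place: the Casimir identity~\eqref{eq;casimirsontheta}, the identification of $d\rho(C_G)$ with $\Delta_k$ on the normalized theta function, and the identification of $d\rho_H(C_H)$ with $-\Omega$. The plan has three steps.

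\emph{Step 1.} Start from~\eqref{eq;casimirsontheta}. It holds at the level of the group variable $g\in G'_{F_\infty}$ for $H(F_\infty)$-invariant $\varphi$, and rests on Lemma~\ref{lemma;rightgesss?} and Theorem~\ref{thm;howe}. Since everything splits over the archimedean places, the identity $d\omega(C_H)=-4\,d\omega(C_G)+\xi$ with $\xi=d\frac{4-n^2}{4}$ is simply the sum over $r$ of the rank-one identities. We read~\eqref{eq;casimirsontheta} as an identity of smooth functions of $g$ valued in $\CC[L^\vee/L]\otimes\mathcal{A}^{2d}(\domain)$.

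\emph{Step 2.} Evaluate at $g=g'_\tau$ and translate both sides to $\HH^d$. The function $g\mapsto\theta(g,\varphi)$ has parallel weight $k$ on the right under $K'_\infty$. Hence it is determined by $\tau\mapsto\theta(g'_\tau,\varphi)=y^{k/2}\theta(\tau,\varphi)$, using~\eqref{eq;newthetatau}. By \cite[Proposition~2.2.5]{bump}, applied separately in each factor $r$, the operator $d\rho_G(C_{G,r})$ acting on such weight-$k$ functions corresponds to $\Delta_{k,r}$ acting on $y^{k/2}\theta(\tau,\varphi)$. Summing over $r$, the term $d\rho_G(C_G)\theta(g'_\tau,\varphi)$ becomes $\Delta_k(y^{k/2}\theta(\tau,\varphi))$.

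\emph{Step 3.} Treat the left side. The operator $d\rho_H(C_H)$ acts only on the $\mathcal{A}^{2d}(\domain)$-component, via pullback, and so commutes with the $\tau$-variable and with the factor $y^{k/2}$. By \cite[p.~451, Ex.~5]{helgason}, applied to each factor $\domain_r$, the Casimir of $\mathfrak{h}_r$ acts on invariant differential forms as $-\Omega_r$; summing over $r$ gives $-\Omega$. Substituting Steps 2 and 3 into~\eqref{eq;casimirsontheta} yields the claimed identity.

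The main obstacle is checking normalizations. The Casimir convention for $C_G$ must match \cite{bump}, including the factor $-4$ relative to Shintani's $C_Q$ noted in the footnote to Theorem~\ref{thm;howe}. The convention for $C_H$ must match Helgason's Laplacian in sign and scaling. I would verify both on the rank-one case $F=\QQ$, using a Gaussian test function, and then use the product structure to pass to general $d$.
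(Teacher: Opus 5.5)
Your proposal is correct and is the paper's own argument: the appendix obtains the corollary by combining \eqref{eq;casimirsontheta} (which comes from Lemma~\ref{lemma;rightgesss?} and Theorem~\ref{thm;howe}) with Bump's Proposition~2.2.5 and Helgason's Ex.~5. Bump's result identifies $d\rho_G(C_{G,r})$ with $\Delta_{k,r}$ on $y^{k/2}\theta(\tau,\varphi)=\theta(g'_\tau,\varphi)$, and Helgason's identifies $d\rho_H(C_H)$ with $-\Omega$. One wording fix in Step~3: apply Helgason's identity to arbitrary forms on $\domain$ with $H(F_\infty)$ acting by pullback, not to ``invariant'' forms, because $\theta(g,\varphi)$ is not $H(F_\infty)$-invariant in $z$ (only $\varphi$ is), and on genuinely invariant forms both sides vanish trivially.
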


\printbibliography

\end{document}